\documentclass[10pt]{article}

\usepackage{latexsym}
\usepackage{amssymb}
\usepackage{amsthm}
\usepackage{amscd}
\usepackage{amsmath}
\usepackage{tikz}
\usetikzlibrary{patterns}
\usepackage{mathabx}
\usepackage{stmaryrd}
\usepackage{shuffle}
\usepackage{comment}
\usepackage{graphicx}

\usepgflibrary{arrows}

\newtheorem{theorem}{Theorem}[section]

\newtheorem{lemma}[theorem]{Lemma}

\newtheorem{proposition}[theorem]{Proposition}
\newtheorem{corollary}[theorem]{Corollary}

\theoremstyle{definition}
\newtheorem*{example}{Example}

\newtheorem{remark}[theorem]{Remark}

\numberwithin{equation}{section}

\setlength{\evensidemargin}{1in}
\addtolength{\evensidemargin}{-1in}
\setlength{\oddsidemargin}{1in}
\addtolength{\oddsidemargin}{-1in}
\setlength{\topmargin}{1in}
\addtolength{\topmargin}{-1.5in}

\setlength{\textwidth}{16.5cm}
\setlength{\textheight}{23cm}


\newcommand{\CC}{\mathbb{C}} 
\newcommand{\FF}{\mathbb{F}}

\newcommand{\ZZ}{\mathbb{Z}}

\newcommand{\cB}{\mathcal{B}}

\newcommand{\cK}{\mathcal{K}}
\newcommand{\cL}{\mathcal{L}}

\newcommand{\cR}{\mathcal{R}}

\newcommand{\fkut}{\mathfrak{ut}}
\newcommand{\fkl}{\mathfrak{l}}

\newcommand{\fkp}{\mathfrak{p}}
\newcommand{\fkr}{\mathfrak{r}}


\newcommand{\Ind}{\mathrm{Ind}}
\newcommand{\GL}{\mathrm{GL}}

\newcommand{\Inf}{\mathrm{Inf}} 

\newcommand{\Irr}{\mathrm{Irr}}

\newcommand{\UT}{\mathrm{UT}}

\newcommand{\Def}{\mathrm{Def}}

\newcommand{\Sym}{\mathrm{Sym}}

\newcommand{\Coeff}{\mathrm{Coeff}}

\newcommand{\scf}{\mathrm{scf}}

\newcommand{\cf}{\mathrm{cf}}


\newcommand{\dd}{\displaystyle}
\newcommand{\scs}{\scriptstyle}
\newcommand{\scscs}{\scriptscriptstyle}



\newcommand{\spanning}{\textnormal{-span}}
\newcommand{\One}{{1\hspace{-.75ex} 1}}

\def\adots{\mathinner{\mkern2mu\raise0pt\hbox{.}  
\mkern2mu\raise4pt\hbox{.}\mkern1mu
\raise7pt\vbox{\kern7pt\hbox{.}}\mkern1mu}}


\newcommand{\Col}{\mathrm{Col}}
\newcommand{\Dela}{\mathrm{Dela}}

\newcommand{\Cl}{\mathtt{Cl}}
\newcommand{\Ch}{\mathtt{Ch}}
\newcommand{\reg}{\mathrm{reg}}
\newcommand{\FQSym}{\mathrm{FQSym}}

\newcommand{\deshuffle}{\mathrm{dSh}}
\newcommand{\remove}{\mathrm{rm}}

\newcommand{\coverings}{\mathrm{CInv}}
\newcommand{\coveringsets}{\mathrm{CInvS}}
\newcommand{\coreset}{\mathrm{crSt}}
\newcommand{\core}{\mathrm{core}}

\newcommand{\NCSym}{\mathrm{NCSym}}
\newcommand{\NSym}{\mathrm{NSym}}
\newcommand{\possibilities}{\mathrm{ccs}}
\newcommand{\Sfl}{\mathrm{Stfl}}
\newcommand{\faith}{\mathrm{fth}}
\newcommand{\Str}{\mathrm{Str}}
\newcommand{\Ac}{A'}

\makeatletter
\renewcommand{\@makefnmark}{\mbox{\textsuperscript{}}}
\makeatother

\allowdisplaybreaks[1]

\title{A categorification of the Malvenuto--Reutenauer\\ algebra via a tower of groups}

\date{}
\author{Farid Aliniaeifard and Nathaniel Thiem}

\begin{document}

\maketitle

\begin{abstract}
There is a long tradition of categorifying combinatorial Hopf algebras by the modules of a tower of algebras (or even better via the representation theory of a tower of groups).  From the point of view of combinatorics, such a categorification supplies canonical bases, inner products, and a natural avenue to prove positivity results.  Recent ideas in supercharacter theory have made fashioning the representation theory of a tower of groups into a Hopf structure more tractable.    This paper applies such a program to the Malvenuto--Reutenauer Hopf algebra.  In particular, we design functors on the representation theory of a tower of $p$-groups that realize the Hopf structure of the Malvenuto--Reutenauer algebra in such a way that its well-known fundamental basis corresponds to a supercharacter basis.
\end{abstract}

\section{Introduction\protect\footnote{Keywords: Hopf structure, permutations, supercharacters, categorification}}

The categorification of the Hopf algebra of symmetric functions $\Sym$ by the representation theory of the symmetric group is a foundational result in combinatorial representation theory.  There are other classical cases with similar constructions---as outlined in Macdonald \cite{Mac}---coming from wreath products and the finite general linear groups.  However, all these examples give Hopf algebras that are essentially copies of $\Sym$ (or a PSH algebra in the language of Zelevinsky \cite{Ze}).  Results like \cite{BLL} indicate that categorifying other Hopf algebras (with the usual induction and restriction functors)  may  require dispensing with towers of groups in favor of towers of algebras.  

The paper \cite{AIM} took a different approach to towers of groups by replacing the full representation theory with a supercharacter theory  and the traditional induction/restriction functor pairing with new functor combinations.  In this way, \cite{AIM} was able to categorify the symmetric functions in noncommuting variables $\NCSym$, and a similar approach in \cite{AT2} found a categorification of a Catalan Hopf subalgebra.   The underlying algebraic structure turns out to be  the shadow of a Hopf monoid \cite{ABT}, a generalization that better captures the underlying representation theory.    

While \cite{AIM} and \cite{AT2} began with the representation theory of finite unipotent uppertriangular groups $\UT_n(\FF_q)$ and found a Hopf structure, this paper was motivated by the opposite approach.  That is, we wanted to find a tower of groups with an associated supercharacter theory that would give us a non-commutative and non-cocommutative Hopf algebra.  As a test case, we selected the Malvenuto--Reutenauer Hopf algebra 
$$\FQSym=\bigoplus_{n\geq 0} \FQSym_n,$$
a graded self-dual Hopf algebra where each graded degree satisfies $\dim(\FQSym_n)=n!$. 

The Malvenuto--Reutenauer Hopf algebra was introduced by Malvenuto in \cite{Malv} with a basis called the fundamental basis.  It contains many well-known Hopf algebras, such as the Hopf algebra of symmetric functions $\Sym$ \cite{Mac}, the Hopf algebra of non-commutative symmetric functions $\NSym$ \cite{GKal}, Stembridge's peak algebra $\mathfrak{P}$ \cite{Stem97}, and the Loday--Ronco Hopf algebra of planar trees $\mathrm{LR}$ \cite{LR98}. Moreover, the Hopf algebra of quasi-symmetric functions  $\mathrm{QSym}$ is a quotient of $\FQSym$ \cite{Ges}. Aguiar and Sottile \cite{AS05} studied the structure of Malvenuto--Reutenauer Hopf algebra and produced a new basis, called the monomial basis, related to the fundamental basis by M\"{o}bius inversion on the weak order on the symmetric groups. They give a geometric description of the monomial basis product structure constants.   This paper studies a similar basis using a different order on permutations that arises naturally in our setting (see Section \ref{PermutationCharacterStructure}).    

A supercharacter theory is a framework developed by Diaconis--Isaacs \cite{DI} to study the representation theory of a group without requiring full knowledge of the irreducible characters.  In general, while groups have many such theories,  there are not many known constructions that work for arbitrary groups.  This paper uses the normal lattice supercharacter theory of a finite group $G$ developed by Aliniaeifard in \cite{Al}. This theory assigns a supercharacter theory to every sublattice of normal subgroups of $G$.  Our first goal was to find a tower of groups with associated sublattices of normal subgroups indexed by permutations.   We further decided to focus on abelian groups, since these tend to have more normal subgroups, and we settled on the Lie algebra $\fkut_n(\FF_q)$ of  $\UT_n(\FF_q)$ viewed as a finite additive group.  As an elementary abelian group $\fkut_n(\FF_q)$ has a fairly uninspiring group structure, but  the normal lattice supercharacter theory makes the group far more combinatorially compelling.  

The main result (Corollary \ref{MainIsomorphism}) of Section \ref{HopfAlgebraIsomorphism} finds a Hopf algebra isomorphism between $\FQSym$ and a representation theoretic algebra
$$\scf(\fkut)=\bigoplus_{n\geq 0}\scf(\fkut_n).$$
A key component of this isomorphism is to identify the functors $\Sfl$ and $\Dela$ that encode the Hopf structure of $\FQSym$ in $\scf(\fkut)$. Here, we take advantage of the feature that every supercharacter theory identifies two canonical bases: the superclass identifier basis and the supercharacter basis.  By computing the structure constants for the supercharacter basis in Theorems \ref{GoingUpCombinatorics} and \ref{GoingDownCombinatorics}, we deduce an isomorphism that sends the supercharacter basis of $\scf(\fkut)$ to the fundamental basis of $\FQSym$.  

In Section \ref{PermutationCharacterStructure}, we examine the structure constants for a third canonical basis that arises in the normal lattice supercharacter theory construction.  As far as we know, this gives a new basis for $\FQSym$ that nevertheless has a nice combinatorial structure.   We use the representation theoretic functors to compute the coproduct in Theorem \ref{PermutationCharacterCoproduct}, and we believe a more combinatorial approach would be significantly more complicated; here the coefficients are in the set $\{0,1\}$.  With slightly more effort Theorem \ref{PermutationCharacterProduct} computes the product in this basis, and the coefficients are in the set $\{-1,0,1\}$.  

While we categorify $\FQSym$, we do not supply much evidence that our construction is canonical; in fact, it seems likely that it is one of many possible choices for a tower of groups.  However, in Section 6 we construct a Hopf monoid that recovers $\FQSym$ via a Fock functor (as described in \cite{AM10}).  In any case, this paper should be viewed as more of a ``proof of concept" for the method outlined above.   To give a more robust connection one would ideally interpret all the relations between $\FQSym$ and other Hopf algebras via functors on the corresponding towers of groups, but at present this remains largely unexplored.  

\vspace{.5cm}

\noindent\textbf{Acknowledgements.}
The second author was supported by Simons Foundation collaboration grant 426594.  We would also like to thank an anonymous referee for detailed and thoughtful comments that  improved the paper.

\section{Preliminaries}

In this section we set up our notation for permutation combinatorics and introduce the Malvenuto--Reutenauer Hopf algebra.  We then review supercharacter theory fundamentals.

\subsection{Permutations}\label{Permutations}

For $n\in \ZZ_{\geq 0}$, let $S_n$ be the symmetric group on the set $\{1,2,\ldots, n\}$.  In this paper, we will use a number of different ways to represent elements of this group (see also Stanley \cite{StV1}).  
\begin{description}
\item[One line notation.]  For $w\in S_n$, we write the anagram of $12\cdots n$ given by $w(1)w(2)\cdots w(n)$.  
\item[Inversion table.]   The \emph{inversion table} of $w\in S_n$ is the sequence $\iota(w)=(\iota_1(w),\iota_2(w),\ldots,\iota_n(w))$, where
$$\iota_k(w)=\#\{i<w^{-1}(k)\mid w(i)>k\}.$$
In one line notation of $w$, $\iota_k(w)$ is the number of integers to the left of $k$ that are bigger than $k$.
For example, $\iota(314625)=(1,3,0,0,1,0)$.  Note that for each $k$, $0\leq \iota_k(w)\leq n-k$.  In fact,
\begin{equation}\label{PermutationToTable}
\begin{array}{r@{\ }ccc}
\iota:& S_n & \longrightarrow & \{(\alpha_1,\ldots,\alpha_n)\in \ZZ^n\mid 0\leq \alpha_k\leq n-k, 1\leq k\leq n\}\\
& w & \mapsto & \iota(w)
\end{array}
\end{equation}
is a bijection.
\item[Rothe diagram.] The \emph{Rothe diagram} of $w\in S_n$ is the subset
$$R_w=\{(i,j)\mid w(i)>j, w^{-1}(j)>i\}\subseteq \{(i,j)\mid 1\leq i,j\leq n\}.$$
In our running example, the $R_{314625}$ is the set of coordinates marked by $\circ$ in the decorated matrix
$$\left[\begin{tikzpicture}[scale=.5,baseline=1.65cm]
\foreach \i/\j in {1/3,2/1,3/4,4/6,5/2,6/5}
	\draw (\j,1) -- (\j,7-\i) -- (6,7-\i); 
\foreach \i/\j in {1/1,1/2,3/2,4/2,4/5}
	\node at (\j,7-\i) {$\circ$};
\end{tikzpicture}\right]$$
Note that $\iota_k(w)=\#\{i\mid (i,k)\in R_w\}$.  Also useful below is that $\iota_k(w^{-1})=\#\{j\mid (k,j)\in R_w\}$.
\end{description}
There is a natural poset on $\ZZ_{\geq 0}^n$ given by
\begin{equation}\label{PermutationSequenceOrder}
v\geq u \qquad \text{if and only if} \qquad \text{$v_i\geq u_i$ for  all $1\leq i\leq n$}.
\end{equation}
Applying this poset to inversion tables gives a corresponding distributive lattice on permutations. 
For example, with $S_4$, we obtain
$$\begin{tikzpicture}[baseline=2cm]
\foreach \x/\y/\z in {0/6/4321,
-2/5/3421,0/5/4231,2/5/4312,
-4/4/3241,-2/4/4213,0/4/3412,2/4/2431,4/4/4132,
-5/3/3214,-3/3/2413,-1/3/2341, 1/3/4123, 3/3/3142, 5/3/1432, 
-4/2/2314,-2/2/3124,0/2/2143,2/2/1423,4/2/1342,
-2/1/2134,0/1/1324,2/1/1243,0/0/1234,
}
\node (\z) at (\x,\y) {$\z$};
\foreach \a/\b in {2134/1234,1324/1234,1243/1234,3124/2134,2314/2134,2143/2134,3124/1324,2143/1243,1342/1324,1423/1324,1423/1243,2341/2314,3214/2314,3214/3124,3142/3124,3142/1342,2413/2314,2413/2143,4123/3124,4123/2143,4123/1423,1432/1342,1432/1423,3241/3214,3241/2341,2431/2341,2431/2413,3412/3214,3412/3142,4213/2413,4213/4123,4213/3214,4132/1432,4132/4123,4132/3142,4231/4213,4231/2431,4231/3241,3421/3412,3421/3241,4312/4132,4312/4213,4312/3412,4321/4231,4321/3421,4321/4312}
\draw[thick,gray] (\a) -- (\b);
\end{tikzpicture}\ .$$

\subsection{The Malvenuto--Reutenauer Hopf algebra $\FQSym$}

The Malvenuto--Reutenauer algebra is a graded Hopf algebra with underlying vector space
$$\FQSym=\bigoplus_{n\in \ZZ_{\geq 0}} \CC\spanning\{F_w\mid w\in S_n\}.$$
To define an algebra structure on $\FQSym$, we define a notion of shifted shuffle.  Given $v\in S_m$, $w\in S_n$ and $A\subseteq\{1,2,\ldots,m+n\}$ with $|A|=n$, define the \emph{$A$-shuffle} $v\shuffle_A w\in S_{m+n}$ by
$$(v\shuffle_A w)(i)=\left\{\begin{array}{ll} v(i-\#\{a\in A\mid a<i\}) & \text{if $i\notin A$}\\
w(\#\{ a\in A\mid a\leq i\})+m & \text{if $i\in A$.}
\end{array}\right.$$
For example, 
$$31542\shuffle_{\{1,4,5,8\}} 3124 =
\begin{tikzpicture}[baseline=-.1cm]
\foreach \x/\y in {0/3,1/1,2/5,3/4,4/2}
	\node (\x) at (\x/4,0) {$\y$};
\foreach \x in {6,...,14}
	\node (\x) at (\x/4,0) {$-$};
\foreach \x/\y in {6/1,9/4,10/5,13/8}
	\node[gray] at (\x/4,.25) {$\scs\y$};
\foreach \x/\y in {16/3,17/1,18/2,19/4}
	\node (\x) at (\x/4,0) {$\y$};
\foreach \s/\t in {0/7,1/8,2/11,3/12,4/14}
	\draw[->] (\s) to  [in=90, out=90] (\t); 
\foreach \s/\t in {16/6,17/9,18/10,19/13}
	\draw[->] (\s) to  [in=-90, out=-90] (\t); 
\end{tikzpicture}
= \overset{{\color{gray} 1}}{8}31\overset{{\color{gray}4}}{6}\overset{{\color{gray}5}}{7}54\overset{{\color{gray}8}}{9}2.$$
Define
$$v\shuffle w=\{v\shuffle_A w\mid A\subseteq\{1,2,\ldots, m+n\} \text{ with } |A|=n\}.$$
The product on $\FQSym$ is given by
\begin{equation}\label{FQSymProduct}
F_vF_w=\sum_{y\in v\shuffle w} F_y.
\end{equation}

To define a coalgebra structure on $\FQSym$, we define a standardized deconcatenation. For $w\in S_{m+n}$, the \emph{$m$-standardized deconcatenation} of $w$ is the pair $(w_{\leq m},w_{>m})\in S_m\times S_n$, where
\begin{align*}
w_{\leq m}(i) &=w(i)-\#\{j\mid m<j\leq m+n\text{ and }w(j)<w(i) \}\\
w_{>m}(j) & =w(j+m)-\#\{i\mid 1\leq i\leq m\text{ and } w(i)<w(j) \}. 
\end{align*}

For example, the $5$-standardized deconcatenation of $319825647$ is 
$$\begin{tikzpicture}[baseline=.5cm]
\foreach \x/\y in {0/3,1/1,2/9,3/8,4/2,5/5,6/6,7/4,8/7}
	\node (\x1) at (\x/4,1) {$\y$};
\foreach \x/\y in {0/3,1/1,2/5,3/4,4/2}
	\node (\x0) at (\x/4-.5,0) {$\y$};
\foreach \x/\y in {5/2,6/3,7/1,8/4}
	\node (\x0) at (\x/4+.5,0) {$\y$};
\foreach \s in {0,...,8}
	\draw[->] (\s1) -- (\s0); 
\end{tikzpicture}.$$
The coproduct on $\FQSym$ is given by 
\begin{equation}\label{FQSymCoProduct}
\Delta(F_w)=\sum_{m=0}^n F_{w_{\leq m}}\otimes F_{w_{>m}}\quad \text{for $w\in S_n$}.
\end{equation}

\subsection{Supercharacter theories}\label{SupercharacterTheories}

Supercharacter theories were introduced by \cite{DI} as a means to get representation theoretic control of groups with difficult representation theories (e.g. the Sylow $p$-subgroups of the finite general linear groups $\GL_n(\FF_p)$).  However, \cite{FGK} also showed that one may use these theories to make the representation theory of less exciting groups (e.g. abelian groups) more compelling.  The examples of supercharacter theories used in this paper will be for abelian groups, so we will denote the group operation by $+$ and the identity by $0$.   Essentially, a supercharacter theory constructs a well-behaved subspace of the vector space of class functions
$$\cf(G)=\{\psi:G\rightarrow \CC\mid \psi(hgh^{-1})=\psi(g),g,h\in G\}$$
which in our abelian case is in fact the space of all  functions $G\rightarrow \CC$.

A \emph{supercharacter theory} $(\Cl,\Ch)$ of a finite abelian group $G$ is a pair of partitions where $\Cl$ is a partition of the group and $\Ch$ is a partition of the irreducible characters $\Irr(G)$ such that 
\begin{enumerate}
\item[(SC1)] $\{0\}\in \Cl$,
\item[(SC2)] $|\Cl|=|\Ch|$,
\item[(SC3)] For each $A\in \Ch$, $\sum_{\psi\in A}\psi(g)=\sum_{\psi\in A}\psi(h)$ whenever $g,h\in K$ for some $K\in \Cl$.
\end{enumerate}  
This definition is equivalent to the one given in \cite{DI}, though specialized to abelian groups.  We typically call the blocks of $\Cl$ \emph{superclasses}.
In fact, condition (SC2) and (SC3) imply that the subspace of functions
$$\scf(G)=\{\gamma:G\rightarrow \CC\mid \gamma(g)=\gamma(h)\text{ whenever $g$ and $h$ are in the same superclass}\}\subseteq \cf(G)$$
has two distinguished bases:
\begin{description}
\item[Superclass identifier functions.]  For each $K\in \Cl$, define
$$\delta_K(g)=\left\{\begin{array}{ll} 1 & \text{if $g\in K$,}\\ 0 & \text{otherwise.}\end{array}\right.$$
\item[Supercharacters.] For each $A\in \Ch$, the corresponding \emph{supercharacter} is the function
$$\chi^A=\sum_{\psi\in A}\psi.$$
\end{description}

A given finite group typically has many supercharacter theories, but we will focus on a construction developed in \cite{Al} that works particularly well for groups with many normal subgroups (e.g. noncyclic abelian groups).   In the abelian context, we will refer to such a theory as a \emph{subgroup lattice} supercharacter theory.

\begin{theorem}[{\cite[Theorem 3.4]{Al}}]
Let $\cL$ be a set of subgroups of a finite abelian group $G$ containing both $G$ and $\{0\}$ such that for $M, N\in \cL$, we have $M\cap N, M+N\in \cL$.  
\begin{enumerate}
\item[(a)] Let $\Cl$ be the partition of $G$ obtained by placing $g,h\in G$ in the same block if and only if the smallest  subgroup in $\cL$ containing $g$ is also the smallest one containing $h$.
\item[(b)] Let $\Ch$ be the partition of $\Irr(G)$ obtained by placing $\psi,\tau\in \Irr(G)$ in the same block if and only if the largest  subgroup in $\cL$ contained in the kernel of $\psi$ is also the largest subgroup contained in the kernel of $\tau$.
\end{enumerate}
Then $(\Cl,\Ch)$ is a supercharacter theory of $G$.
\end{theorem}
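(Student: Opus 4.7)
The plan is to verify the three supercharacter-theory axioms (SC1), (SC2), (SC3) in turn, relying in each case on the two closure hypotheses on $\cL$ (under $\cap$ and under products of normal subgroups).

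Axiom (SC1) is immediate: since $\{1\}\in\cL$, the smallest element of $\cL$ containing the identity is $\{1\}$ itself, so $\{1\}$ is one of the blocks of $\Cl$.

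For (SC3), I would first establish a structural dichotomy that exploits closure under intersection: for every $M\in\cL$ and every superclass $K_{N'}\in\Cl$, either $K_{N'}\subseteq M$ or $K_{N'}\cap M=\emptyset$. Indeed, if $g\in K_{N'}\cap M$ then $N'\cap M\in\cL$ contains $g$ and is contained in $N'$, so the minimality in the definition of $K_{N'}$ forces $N'\cap M=N'$, i.e.\ $N'\subseteq M$. In particular, the indicator $g\mapsto[g\in M]$ of each $M\in\cL$ is a superclass function. Next, set $B_M=\{\psi\in\Irr(G):\ker\psi\supseteq M\}$ for $M\in\cL$; closure of $\cL$ under products of normal subgroups yields $B_{M_1}\cap B_{M_2}=B_{M_1M_2}$ with $M_1M_2\in\cL$. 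Writing $A_N=B_N\setminus\bigcup_{M\in\cL,\,M\supsetneq N}B_M$ and applying inclusion--exclusion over the finite lattice $\cL_{\supseteq N}$ expresses
\[
\chi^{A_N}(g)=\sum_{\psi\in A_N}\psi(1)\psi(g)
\]
as an integer linear combination of the functions $\sum_{\psi\in B_M}\psi(1)\psi(g)=|G/M|\,[g\in M]$ (each being an inflated regular character of $G/M$) for various $M\in\cL$ with $M\supseteq N$. By the dichotomy each such summand is constant on every superclass, giving (SC3).

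For (SC2), I would do a dimension count inside $\scf(G)$, whose dimension is $|\Cl|$ by definition. The supercharacters $\chi^{A_N}$ for nonempty $A_N$ are linearly independent (they are nonzero positive combinations of pairwise disjoint sets of irreducible characters) and lie in $\scf(G)$ by (SC3), so $|\Ch|\leq|\Cl|$. For the reverse inequality I would show the supercharacters span $\scf(G)$ by expressing each superclass indicator $\delta_{K_N}$ as a combination of them. M\"obius inversion on the lattice $\cL$ gives
\[
\delta_{K_N}(g)=\sum_{M\in\cL,\,M\subseteq N}\mu_{\cL}(M,N)\,[g\in M],
\]
and inverting the character-side inclusion--exclusion of (SC3) yields $[g\in M]=|G/M|^{-1}\sum_{M'\in\cL,\,M'\supseteq M}\chi^{A_{M'}}(g)$. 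Combining these writes each $\delta_{K_N}$ as a rational combination of supercharacters, so $|\Ch|\geq|\Cl|$, and equality holds.

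The main obstacle is the twin M\"obius/inclusion--exclusion bookkeeping, and the key insight is that the two closure hypotheses on $\cL$ are precisely dual to each other in the two halves of the argument: closure under $\cap$ makes $[g\in M]$ a superclass function and underlies M\"obius inversion on $\cL$ on the class side, while closure under products $M_1M_2$ is exactly what closes intersections $B_{M_1}\cap B_{M_2}$ inside the family $\{B_M:M\in\cL\}$ and underlies the inclusion--exclusion on the character side. If either closure failed, one of the two supercharacter/superclass decompositions would leave the family indexed by $\cL$ and the pairing between $\Cl$ and $\Ch$ would break.
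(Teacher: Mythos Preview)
The paper does not actually prove this theorem; it is quoted verbatim from \cite[Theorem 3.4]{Al} and immediately used without argument, so there is no in-paper proof to compare against.

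That said, your proposal is a correct and standard route to the result. The two Möbius inversions you set up are exactly the right ones: closure of $\cL$ under products guarantees that each $\psi\in\Irr(G)$ has a unique largest $N_\psi\in\cL$ inside $\ker\psi$, which is what makes the decomposition $B_M=\bigsqcup_{N\supseteq M}A_N$ valid and invertible on the character side; closure under intersections guarantees that each $g\in G$ has a unique smallest $N_g\in\cL$ containing it, which makes $[g\in M]$ a superclass function and makes the class-side inversion $\bar\delta_N=\sum_{M\subseteq N}\delta_{K_M}$ valid. One small point you leave implicit: both $A_N$ and $K_N$ can be empty for some $N\in\cL$, so $|\Ch|$ and $|\Cl|$ are a priori only bounded above by $|\cL|$; your spanning/independence argument handles this correctly (empty blocks contribute zero to the relevant sums), but it is worth saying so explicitly. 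With that caveat, the argument is complete.
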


A feature of this result is that (a) associates a subgroup $N$ to each superclass (and we will refer to the corresponding identifier function as $\delta_N$), and (b) associates a  subgroup  $N$ to each supercharacter (referred to as $\chi^N$).   For $N\in \cL$, we get some additional class functions 
$$\bar\delta_N=\sum_{M\subseteq N}\delta_M\quad \text{and} \quad \bar\chi^N=\sum_{O\supseteq N} \chi^O.$$
In fact,  if $\One$ is the trivial character of $G$, then
\begin{equation}\label{PermutationCharacterBasis}
\bar\delta_N(g)=\left\{\begin{array}{ll} 1 & \text{if $g\in N$,}\\ 0 & \text{otherwise,}\end{array}\right.\qquad\text{and} \qquad 
\bar\chi^N=\Ind_N^G(\One),
\end{equation}
so $\bar\delta_N=\frac{|N|}{|G|}\bar\chi^N$.  Therefore these constructions effectively add a third canonical basis (up to scaling) for $\scf(G)$.

\section{The vector space $\scf(\fkut_n)$}

In this section, we construct the main vector space $\scf(\fkut_n)$ using a sublattice of subgroups of $\fkut_n$ to give a subgroup lattice supercharacter theory.  We then introduce a somewhat mysterious involution on this space that will be important for the Hopf algebra structure.

\subsection{A subgroup lattice theory for $\fkut_n$} 
Fix a finite field $\FF_q$ with $q$ elements and let $M_n(\FF_q)$ denote the algebra of $n\times n$ matrices with entries in $\FF_q$.  Define the nilpotent subalgebra
$$\fkut_n=\{x\in M_n(\FF_q)\mid x_{ij}\neq 0\text{ implies } i<j\}.$$
If $q$ is a prime $p$-power, then the additive group of $\fkut_n$ is an elementary abelian $p$-group.   For each $w\in S_n$, define
\begin{equation}
\fkut_w =\{x\in \fkut_n\mid x_{ij}\neq 0 \text{ implies } 0< j-i\leq \iota_i(w)\}.
\end{equation}
For example, if $w=314625$, then $\iota(w)=(1,3,0,0,1,0)$ and 
$$\fkut_w=\left[\begin{array}{cccccc} 
0 & * & 0 & 0 & 0 & 0\\
0 & 0 & * & * & * & 0 \\
0 & 0 & 0 & 0 & 0 & 0 \\
0 & 0 & 0 & 0 & 0 & 0  \\
0 & 0 & 0 & 0 & 0 & * \\
0 & 0 & 0 & 0 & 0 & 0\\
\end{array}\right]\subseteq 
\left[\begin{array}{cccccc} 
0 & * & * & * & * & *\\
0 & 0 & * & * & * & * \\
0 & 0 & 0 & * & * & * \\
0 & 0 & 0 & 0 & * & *  \\
0 & 0 & 0 & 0 & 0 & * \\
0 & 0 & 0 & 0 & 0 & 0\\
\end{array}\right]=\fkut_6.$$
Subgroup containment gives a lattice
 \begin{equation}\label{SubgroupSet}
 \cL_n=\{\fkut_w\mid w\in S_n\}
 \end{equation}
isomorphic to the inversion table order (\ref{PermutationSequenceOrder}) on $S_n$. In fact, if $\alpha$ and $\beta$ are inversion tables, we may use the inverse of the bijection (\ref{PermutationToTable}) to get
\begin{align*}
\fkut_{\iota^{-1}(\alpha)}\cap \fkut_{\iota^{-1}(\beta)} & =\fkut_{\iota^{-1}(\min(\alpha,\beta))},\quad \text{where} \quad \min(\alpha,\beta)_k=\min(\alpha_k,\beta_k),\\
\fkut_{\iota^{-1}(\alpha)}+\fkut_{\iota^{-1}(\beta)}  & = \fkut_{\iota^{-1}(\max(\alpha,\beta))},\quad \text{where} \quad \max(\alpha,\beta)_k=\max(\alpha_k,\beta_k).
\end{align*}
  The following result re-interprets the covers in the inversion table order directly on permutations.

\begin{proposition}\label{CoveringInversions}
The permutation $w\in S_n$ covers $v\in S_n$ in the inversion table order if and only if there exists $i<k$ such that 
\begin{enumerate}
\item[(1)] $w(j)=v(j)$ for all $j\notin \{i,k\},$
\item[(2)] $v(i)=w(k)<w(i)=v(k)$,
\item[(3)] for each $i<j<k$, $w(j)<w(k)$.
\end{enumerate}
\end{proposition}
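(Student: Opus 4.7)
The plan is to exploit the fact that $w\mapsto\iota(w)$ is a bijection from $S_n$ onto $\prod_{k=1}^{n}\{0,1,\ldots,n-k\}$ carrying the inversion table order to the componentwise product order. A product of chains is a distributive lattice whose cover relations are precisely increments by $1$ in a single coordinate, so ``$w$ covers $v$'' is equivalent to the existence of a value $\ell$ with $\iota_\ell(w)=\iota_\ell(v)+1$ and $\iota_m(w)=\iota_m(v)$ for all $m\neq\ell$. The task then reduces to matching this algebraic cover with the combinatorial swap in the proposition.

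For the ``if'' direction, I would assume $w$ is obtained from $v$ by interchanging $a=v(i)$ and $b=v(k)$ at positions $i<k$ with $a<b$ and $v(j)<a$ for each $i<j<k$, and compute $\iota_m(w)-\iota_m(v)$ case-by-case on the value $m$, namely $m=a$, $m=b$, $m>b$, $a<m<b$, and $m<a$. In every case the intermediate hypothesis forbids intermediate entries from lying in the range relevant to $m$, so they are invisible to the count of entries exceeding $m$, and one checks that the contributions at the exchanged positions $i$ and $k$ balance between $v$ and $w$. The net effect is $\iota_a(w)=\iota_a(v)+1$ and $\iota_m(w)=\iota_m(v)$ for all $m\neq a$, so $w$ covers $v$.

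For the converse, I would use a counting match. Any swap of the prescribed form is determined by its smaller value $a$: the position $i=v^{-1}(a)$ is forced, and the intermediate condition forces $k$ to be the smallest index $>i$ with $v(k)>a$. Such a $k$ exists exactly when some entry to the right of $a$ in $v$ is larger than $a$, i.e., iff $\iota_a(v)<n-a$, which is also the condition that the $a$-coordinate of $\iota(v)$ may be incremented. Hence the set of valid swap-data and the set of covers above $v$ are both indexed by the same collection of values $a$. The ``if'' direction maps the swap with smaller value $a$ to the cover incrementing $\iota_a$, giving an injection between finite sets of equal cardinality, which must be a bijection. Consequently every cover of $v$ arises from a swap of the stated form.

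The main obstacle is the bookkeeping in the forward case analysis: the intermediate hypothesis is used differently for each $m$, ruling out intermediate entries in $(a,b)$ or in $(b,\infty)$, or equal to $b$, depending on the case. One must also separately verify that, for $m<a$, the swap of $a$ and $b$ contributes equally to the count on each side of $m$ (since both $a,b>m$). Each individual check is elementary, but the proof should be organised so that the role of the intermediate hypothesis is visible in each case.
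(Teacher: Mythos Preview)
Your proposal is correct and follows essentially the same route as the paper: reduce covers to single-coordinate increments of the inversion table, and identify the unique swap (with $i=v^{-1}(a)$ and $k$ the least index $>i$ with $v(k)>a$) corresponding to each incrementable coordinate $a$. The only difference is that the paper argues the converse directly---constructing the swapped permutation $w'$ from $v$ and the incremented coordinate and concluding $w'=w$ since their inversion tables agree---whereas you package the same construction as a cardinality match; your explicit case analysis for the forward direction spells out what the paper states as a bare assertion.
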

\begin{proof}
By definition $w$ covers $v$ if and only if there exists $b$ such that $\iota_b(w)=\iota_b(v)+1$ and $\iota_c(w)=\iota_c(v)$ for all $c\neq b$.  

It is straight-forward to check that (1), (2), and (3) imply that, in this case, such a $b$ is given by $b=w(k)$.  Conversely, suppose there exists $b=w(k)$ such that $\iota_b(w)=\iota_b(v)+1$ and $\iota_c(w)=\iota_c(v)$ for all $c\neq b$.  Let $i<k$ be maximal such that $w(i)>w(k)$ (which must exist by assumption).  Then
$$v'=w(1)w(2)\cdots w(i-1) w(k)w(i+1)\cdots w(k-1)w(i)w(k+1)\cdots w(n)$$
satisfies $\iota_b(w)=\iota_b(v')+1$ and $\iota_c(w)=\iota_c(v')$ for all $c\neq b$. Since $\iota$ is bijective, $v'=v$.
\end{proof}

Let $(\Cl_n,\Ch_n)$ be the subgroup lattice supercharacter theory of $\fkut_n$ associated with the lattice $\cL_n$ (\ref{SubgroupSet}).
The superclasses are given by
\begin{equation}\label{Superclasses}
\Cl_w=\fkut_w-\bigcup_{\fkut_v\subset \fkut_w} \fkut_v,
\end{equation}
which is a nonempty set for each $w\in S_n$.  As in Section \ref{SupercharacterTheories}, the vector space of superclass functions
$$\scf(\fkut_n)=\{\psi:\fkut_n\rightarrow \CC\mid \psi(x)=\psi(y),  \text{ if $x,y\in \Cl_w$ for some $w\in S_n$}\}$$
has three distinguished bases
$$\{\delta_w\mid w\in S_n\},\quad \{\bar\chi^w\mid w\in S_n\},\quad \text{and}\quad \{\chi^w\mid w\in S_n\},$$
where it is notationally convenient to label each basis element with the underlying permutation $w$ rather than the normal subgroup $\fkut_w$. From \cite[Corollary 3.4]{AT}, we  obtain a supercharacter formula.

\begin{proposition}
For $w,v\in S_n$ and $x\in \Cl_w$,
$$\chi^v(x)=\left\{\begin{array}{@{}ll}
\frac{|\fkut_n|}{|\fkut_v|}\left(\frac{q-1}{q}\right)^{\#\{j\mid \iota_j(v)<n-j\}}\left(\frac{-1}{q-1}\right)^{\#\{j\mid \iota_j(w)=\iota_j(v)+1\}} & \text{if $\iota_j(w)\leq \iota_j(v)+1$ for all $1\leq j\leq n$,}\\ 0 & \text{otherwise.}
\end{array}\right.$$
\end{proposition}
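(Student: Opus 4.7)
The plan is to combine the Möbius inversion implicit in the Remark following equation~(\ref{PermutationCharacterBasis}), namely $\chi^v=\sum_{O\supseteq\fkut_v}\mu(\fkut_v,O)\,\bar\chi^O$, with explicit formulas for $\bar\chi^u(x)$ and for the Möbius function of the lattice $\{\fkut_w\mid w\in S_n\}$. Via the inversion table, this lattice is isomorphic to the product of chains $\prod_{i=1}^{n}\{0,1,\ldots,n-i\}$, so its Möbius function factors coordinatewise: $\mu(\fkut_v,\fkut_u)$ vanishes unless $\iota_i(u)-\iota_i(v)\in\{0,1\}$ for every $i$, in which case it equals $(-1)^t$ with $t=\#\{i:\iota_i(u)=\iota_i(v)+1\}$.

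Since $\fkut_n$ is abelian, $\bar\chi^u$ is a permutation character on cosets of $\fkut_u$, so $\bar\chi^u(x)=[\fkut_n:\fkut_u]=q^{\binom{n}{2}-|\iota(u)|}$ when $x\in\fkut_u$ and $0$ otherwise. For $x\in\Cl_w$, the definition (\ref{Superclasses}) together with closure of the lattice under intersections forces $x\in\fkut_u$ to be equivalent to $\fkut_w\subseteq\fkut_u$, i.e.\ to $\iota_i(w)\leq\iota_i(u)$ for every $i$; indeed, if $\fkut_u\not\supseteq\fkut_w$ then $x\in\fkut_u\cap\fkut_w\subsetneq\fkut_w$ would contradict $x\in\Cl_w$.

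Combining these two constraints, the nonzero summands in $\chi^v(x)=\sum_u\mu(\fkut_v,\fkut_u)\bar\chi^u(x)$ are indexed by $u$ with $\iota_i(v)\leq\iota_i(u)\leq\iota_i(v)+1$ and $\iota_i(w)\leq\iota_i(u)$ for every $i$. If some coordinate $i$ violates $\iota_i(w)\leq\iota_i(v)+1$, no such $u$ exists and $\chi^v(x)=0$, which gives the vanishing case. Otherwise each coordinate $i$ splits into three types: forced up to $\iota_i(v)+1$ (when $\iota_i(w)=\iota_i(v)+1$), forced flat at $n-i$ (when $\iota_i(v)=n-i$), or free between $\iota_i(v)$ and $\iota_i(v)+1$. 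Letting $t$ count forced-up positions and $f=\#\{i:\iota_i(v)<n-i\}-t$ count free positions, factoring the base contribution $q^{\binom{n}{2}-|\iota(v)|}=|\fkut_n|/|\fkut_v|$ and summing the $2^f$ independent free choices yields $\bigl(|\fkut_n|/|\fkut_v|\bigr)\cdot(-q^{-1})^t\cdot(1-q^{-1})^f$. Regrouping via the identity $(-q^{-1})\cdot\bigl(q/(q-1)\bigr)=-1/(q-1)$ absorbs the $t$ forced-up factors into the $(q-1)/q$ factors and produces the claimed $\bigl((q-1)/q\bigr)^{\#\{i:\iota_i(v)<n-i\}}\bigl(-1/(q-1)\bigr)^{t}$. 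The main obstacle is simply the careful casework on the three position types and the algebraic regrouping; no structural input beyond Möbius inversion on a product of chains is required.
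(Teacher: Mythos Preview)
Your proof is correct. The paper itself does not prove this proposition; it simply cites \cite[Corollary~3.4]{AT} for a general supercharacter formula in normal lattice supercharacter theories and records the specialization to the lattice $\{\fkut_w\mid w\in S_n\}$. What you have supplied is a direct, self-contained derivation in this particular case, exploiting the identification of the lattice with the product of chains $\prod_{i=1}^{n}\{0,1,\ldots,n-i\}$ via the inversion table. The three ingredients you use---Möbius inversion of $\bar\chi^N=\sum_{O\supseteq N}\chi^O$, the factored Möbius function of a chain product, and the computation $\bar\chi^u(x)=[\fkut_n:\fkut_u]\cdot\mathbf{1}_{\fkut_w\subseteq\fkut_u}$ for $x\in\Cl_w$---are exactly the right ones, and your trichotomy of coordinates (forced up, forced flat, free) together with the algebraic regrouping $(-q^{-1})=\bigl((q-1)/q\bigr)\cdot\bigl(-1/(q-1)\bigr)$ cleanly recovers the stated formula. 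One small point worth making explicit: the forced-up case $\iota_i(w)=\iota_i(v)+1$ and the forced-flat case $\iota_i(v)=n-i$ are genuinely disjoint, since $\iota_i(w)\leq n-i$ always; you use this implicitly when asserting $t+f=\#\{i:\iota_i(v)<n-i\}$.
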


We may re-interpret this result as an entry by entry factorization.  Let $\One_{\FF_q^+}$ be the trivial character of $\FF_q^+$ and $\reg_{\FF_q^+}$ be the regular character of $\FF_q^+$.

\begin{corollary}\label{SupercharacterFactorization}
For $v\in S_n$ and $x\in \fkut_n$,
$$\chi^v(x)=\prod_{1\leq i<j\leq n\atop j-i\leq \iota_i(v)} \One_{\FF_q^+}(x_{ij}) \prod_{1\leq i<j\leq n\atop j-i= \iota_i(v)+1} (\reg_{\FF_q^+}-\One_{\FF_q^+})(x_{ij}) \prod_{1\leq i<j\leq n\atop j-i> \iota_i(v)+1}\reg_{\FF_q^+}(x_{ij}).
$$
\end{corollary}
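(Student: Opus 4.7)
The plan is to evaluate the right-hand side of the factorization on a superclass representative $x\in \Cl_w$ and match it term-by-term with the formula of the preceding Proposition. Two ingredients suffice: an explicit description of $\Cl_w$, and the pointwise values $\One_{\FF_q^+}(a)=1$, $\reg_{\FF_q^+}(a)=q\delta_{a,0}$, and $(\reg_{\FF_q^+}-\One_{\FF_q^+})(a)$ equal to $q-1$ if $a=0$ and $-1$ otherwise.

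For the first ingredient, covers in the lattice $\{\fkut_u\mid u\in S_n\}$ correspond exactly to decrementing one coordinate $\iota_b$ of the inversion table by $1$, so any $\fkut_v$ covered by $\fkut_w$ differs from $\fkut_w$ only by forbidding the single matrix position $(b,b+\iota_b(w))$. From (\ref{Superclasses}) this yields
$$x\in \Cl_w \iff x\in\fkut_w \text{ and } x_{i,i+\iota_i(w)}\neq 0 \text{ for every } i \text{ with } \iota_i(w)\geq 1.$$
I would then split the $\binom{n}{2}$ positions $i<j$ into the three ranges of the product. The first range contributes $1$ identically. In the third range ($j-i>\iota_i(v)+1$), every factor equals $q\delta_{x_{ij},0}$; by the description of $\Cl_w$, the entire product vanishes precisely when $\iota_i(w)>\iota_i(v)+1$ for some $i$, which recovers the ``otherwise'' branch of the Proposition. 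Under the complementary hypothesis $\iota_i(w)\leq \iota_i(v)+1$ for all $i$, each third-range factor is $q$, while a second-range position (which exists in row $i$ only when $\iota_i(v)<n-i$) contributes $q-1$ if $\iota_i(w)\leq \iota_i(v)$ and $-1$ if $\iota_i(w)=\iota_i(v)+1$.

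Setting $A=\#\{i\mid \iota_i(v)<n-i\}$, $B=\#\{i\mid \iota_i(w)=\iota_i(v)+1\}$, and counting third-range positions as $\binom{n}{2}-\sum_i\iota_i(v)-A$, the total product rearranges to $\frac{|\fkut_n|}{|\fkut_v|}\bigl(\tfrac{q-1}{q}\bigr)^A\bigl(\tfrac{-1}{q-1}\bigr)^B$ using $|\fkut_v|=q^{\sum_i\iota_i(v)}$, which is exactly the Proposition's formula. The only substantive step is pinning down the forced nonzero entries of $x\in\Cl_w$; once that is in hand, the rest is exponent bookkeeping and the inclusion $B\leq A$ is automatic from $\iota_i(w)\leq n-i$.
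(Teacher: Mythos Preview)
Your proposal is correct and is precisely the intended verification: the paper states the result as an immediate corollary of the preceding Proposition without further proof, and your argument supplies the routine check that the product formula, evaluated on any $x\in\Cl_w$, reproduces the Proposition's value (using the description of $\Cl_w$ via forced nonzero entries at positions $(i,i+\iota_i(w))$). The exponent bookkeeping and the observation that the third-range factors force vanishing exactly when some $\iota_i(w)>\iota_i(v)+1$ are both correct.
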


In light of this corollary, and for the purpose of examples below, the following notation can be helpful.  Let $\varphi\in M_n\Big(\cf(\FF_q^+)\Big)$ be a strictly upper-triangular $n\times n$ matrix with functions from $\FF_q^+\rightarrow \CC$ as entries.  Then for $x\in \fkut_n$ we can define
\begin{equation} \label{MatrixFunctions}
\varphi(x)=\prod_{i<j} \varphi_{ij}(x_{ij}).
\end{equation}

\begin{example}  In light of Corollary \ref{SupercharacterFactorization}, for example,
\begin{equation} \label{SupercharacterMatrixExample}
\chi^{5317426}=\chi^{\iota^{-1}(2,4,1,2,0,1,0)}=
\left[\begin{smallmatrix}
\cdot & \One & \One & \reg-\One & \reg & \reg & \reg\\
 & \cdot & \One & \One & \One & \One &  \reg-\One \\
 & & \cdot & \One & \reg-\One & \reg& \reg \\
&  & & \cdot & \One & \One & \reg-\One \\
&  & & & \cdot & \reg-\One & \reg  \\
&  & & & & \cdot & \One \\
&  & & & & & \cdot  
\end{smallmatrix}\right],
\end{equation}
where $\cdot$ denotes a diagonal entry.
\end{example}

Fix a nontrivial homomorphism $\vartheta:\FF_q^+\rightarrow \CC^\times$.  Then for each $x\in \fkut_n$, define a class function $\vartheta_x$ in the notation (\ref{MatrixFunctions}) given by
\begin{equation}\label{ThetaNotation}
(\vartheta_x)_{ij}=\vartheta_{x_{ij}}\qquad\text{where for $r,t\in \FF_q$, $\vartheta_r(t)=\vartheta(rt)$.}
\end{equation}
In this notation,
\begin{equation}\label{SupercharacterClassFunctions}
 \chi^w=\sum_{{x\in \fkut_n\atop x_{jk}=0, k-j\leq \iota_j(w)}\atop x_{j,j+\iota_j(w)+1}\neq 0}\vartheta_x\in \cf(\fkut_n).
 \end{equation}
The Borel subgroup
$$B_n=\{b\in \GL_n(\FF_q)\mid b_{ji}=0, i<j\}$$ 
of invertible upper-triangular matrices has a right action on $\fkut_n$, and we let it do so row by row.  In particular, for $x\in \fkut_n$ and $(b_1,\ldots, b_n)\in B_n^n$, define
$$x(b_1,\ldots, b_n)=\left[\begin{array}{c} x_{1\ast} b_1\\ x_{2\ast} b_2\\ \vdots \\ x_{n\ast} b_n\end{array}\right], \quad \text{where $x_{i\ast}$ is the $i$th row of $x$}.$$
By (\ref{SupercharacterClassFunctions}), the function
\begin{equation} \label{SymMap}\begin{array}{rccc} \pi_B: & \cf(\fkut_n) & \longrightarrow & \scf(\fkut_n)\\ 
& \vartheta_x & \mapsto & \dd\frac{1}{|B_n|^n}\sum_{\underline{b}\in B_n^n} \vartheta_{x\underline{b}}, 
\end{array}
\end{equation}
is a surjective projection.

\subsection{A $\star$-duality}

The linear bijection
$$\begin{array}{rccccc} \star:  & \scf(\fkut_n) & \longrightarrow & \scf(\fkut_n) & \longrightarrow & \scf(\fkut_n)\\
& \dd\sum_{w\in S_n} c_w\chi^w & \mapsto & \dd\sum_{w\in S_n} c_w \chi^{w^{-1}}  & \mapsto & \dd\sum_{w\in S_n} c_w \frac{\chi^{w^{-1}}}{\chi^{w^{-1}}(0)}\end{array}$$
is the composition of a combinatorial involution (inversion of the permutation) with a representation theoretic bijection (duality with respect to the supercharacter basis and the usual inner product on class functions).  It is worth noting several representation theoretic stability properties of the combinatorial inversion.

\begin{proposition} For $w\in S_n$,
$$\fkut_n/\fkut_{w^{-1}}\cong \fkut_n/\fkut_w$$
\end{proposition}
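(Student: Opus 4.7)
The plan is to reduce the claim to a counting argument. Since the additive group $\fkut_n$ is an elementary abelian $p$-group, every quotient is again an elementary abelian $p$-group, and such a group is determined up to isomorphism by its order alone. Hence it suffices to prove $|\fkut_w|=|\fkut_{w^{-1}}|$ and then pass to the quotients of $\fkut_n$.

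To read off $|\fkut_w|$, I would use the definition directly: $\fkut_w$ is the $\FF_q$-span of the matrix units $E_{ij}$ for those $(i,j)$ satisfying $0<j-i\leq \iota_i(w)$. The bound $\iota_i(w)\leq n-i$ guarantees every admissible $j$ lies in $\{1,\ldots,n\}$, so row $i$ contributes exactly $\iota_i(w)$ independent coordinates and $|\fkut_w|=q^{\sum_i \iota_i(w)}$. Next, by substituting $j=w^{-1}(k)$ in the definition $\iota_k(w)=\#\{i<w^{-1}(k)\mid w(i)>k\}$, the sum $\sum_k \iota_k(w)$ rewrites as $\#\{(i,j):i<j,\ w(i)>w(j)\}$, the total inversion number $\mathrm{inv}(w)$.

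The final step invokes the classical identity $\mathrm{inv}(w)=\mathrm{inv}(w^{-1})$, immediate from the bijection of inversion pairs $(i,j)\mapsto(w(j),w(i))$. This yields $|\fkut_w|=|\fkut_{w^{-1}}|$, and hence the two quotients are elementary abelian $p$-groups of the same order, so they are isomorphic. There is no real obstacle here; the argument is short because we only need an abstract isomorphism. If one instead wanted a canonical isomorphism, the natural route would be to exhibit an explicit bijection between the complementary position sets $\{(i,j):\iota_i(w)<j-i\}$ and $\{(i,j):\kappa_i(w)<j-i\}$ (using the identification $\iota(w^{-1})=\kappa(w)$ already noted in the paper), but this refinement is not needed for the stated claim.
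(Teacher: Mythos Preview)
Your argument is correct: since $\fkut_n$ is elementary abelian, the quotient is determined by its order, and $|\fkut_w|=q^{\mathrm{inv}(w)}$ together with $\mathrm{inv}(w)=\mathrm{inv}(w^{-1})$ finishes the job cleanly.

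The paper takes a more refined route. Rather than comparing only the total inversion count, it proves the row-by-row identity
\[
n-j-\iota_j(w^{-1})=n-w(j)-\iota_{w(j)}(w),
\]
i.e.\ that the vector $\iota^\vee(w^{-1})$ is obtained from $\iota^\vee(w)$ by permuting entries via $w$. This of course implies your equality of sums, but it also supports the subsequent Remark (the $\star$-involution preserves supercharacter degrees, since $\chi^w(1)$ depends on $\iota^\vee(w)$ entrywise) and feeds the notation $\iota^\vee$ used in the coproduct formula of Section~5. Your approach is shorter and perfectly adequate for the proposition as stated; the paper's approach buys a structural refinement that gets reused downstream. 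Your closing paragraph already anticipates this distinction.
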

\begin{proof}
We claim that $n-j-\iota_j(w^{-1})=n-w(j)-\iota_{w(j)}(w)$ for all $1\leq j\leq n$.  Note that
\begin{align*}
n-j-\iota_j(w^{-1}) &=n-j-\#\{i\mid i<w(j)\text{ and } w^{-1}(i)>j\}\\
&=\#\{i\mid i>w(j)\text{ and }  w^{-1}(i)>j\}\\
&=n-w(j)-\#\{i\mid i<j\text{ and } w(i)>w(j)\}\\
&=n-w(j)-\iota_{w(j)}(w).
\end{align*}  
The result now follows from the definition of $\fkut_w$ and its implicit complement in $\fkut_n$.  
\end{proof}
\begin{remark}
The proof of the proposition in fact shows that inverting $w$ permutes the entries in the vector 
\begin{equation} \label{DualInversionTable}
\iota^\vee(w)=(n-1-\iota_1(w),n-2-\iota_2(w),\ldots, n-n-\iota_n(w)).
\end{equation}
Thus,  $\chi^{w^{-1}}(0)=\chi^w(0)$, so in the $\star$-function it does not matter in which order we apply the two bijections.   On the other hand, while $|\fkut_w|=|\fkut_{w^{-1}}|$, inversion does not preserve superclass size.  For example,  
$$|\fkut_{(312)}|=\left|\left[\begin{array}{ccc} 0 & \ast & 0\\ 0 & 0 & \ast\\ 0 & 0 &0\end{array}\right]\right|=\left|\left[\begin{array}{ccc} 0 & \ast & \ast\\ 0 & 0 & 0\\ 0 & 0 &0\end{array}\right]\right|=|\fkut_{(231)}|=q^2,$$ 
but by (\ref{Superclasses}) the sizes of the corresponding superclasses are $(q-1)^2$ and $q(q-1)$, respectively.
\end{remark}

\section{The Hopf algebra $\scf(\fkut)$} \label{HopfAlgebraIsomorphism}

The goal of this section is to define functors that give a Hopf structure to the space
$$\scf(\fkut)=\bigoplus_{n\geq 0}\scf(\fkut_n).$$
  By computing the structure constants on the supercharacter basis, we deduce that the Hopf algebra is in fact isomorphic to the Malvenuto--Reutenauer Hopf algebra $\FQSym$. 

\subsection{Functorial subgroups} \label{SectionSubgroups}

We begin by defining a family of subgroups of $\fkut_\ell$ that depend on a subset $A\subseteq I_\ell=\{1,2,\ldots, \ell\}$.   First define a partition of $\{1\leq i<j\leq \ell\}$ with blocks
\begin{align*}
U_A&=\{(i,j)\in A\times I_\ell\mid i< j,j> \ell-\#\{a\in A\mid a>i\}\}\\
L_A&=\{(i,j)\in A\times I_\ell\mid  i< j\leq \ell-\#\{a\in A\mid a>i\}\}\\
U_A^\vee&=\{(i,j)\in (I_\ell-A)\times I_\ell \mid  i< j\leq \ell-\#\{a\in A\mid a>i\}\}\\
R_A&=\{(i,j)\in (I_\ell-A)\times I_\ell \mid  i<j,j> \ell-\#\{a\in A\mid a>i\}\}.
\end{align*}
For example, if $A=\{1,4,5, 7\}\subseteq \{1,2,\ldots, 9\}$, then
$$\begin{tikzpicture}[scale=.5]
	\fill[pattern=vertical lines] (7,8) +(-1.5,-0.5) rectangle ++(1.5,0.5);
	\fill[gray] (3,8)  +(-2.5,-0.5) rectangle ++(2.5,0.5);
	\fill[pattern=vertical lines] (3,8)  +(-2.5,-0.5) rectangle ++(2.5,0.5);
	\fill[pattern=horizontal lines] (3.5,7) +(-2,-0.5) rectangle ++(2,0.5);
	\fill[gray] (7,7) +(-1.5,-0.5) rectangle ++(1.5,0.5);
	\fill[pattern=horizontal lines] (7,7) +(-1.5,-0.5) rectangle ++(1.5,0.5);
	\fill[pattern=horizontal lines] (4,6) +(-1.5,-0.5) rectangle ++(1.5,0.5);
	\fill[gray] (7,6) +(-1.5,-0.5) rectangle ++(1.5,0.5);
	\fill[pattern=horizontal lines] (7,6) +(-1.5,-0.5) rectangle ++(1.5,0.5);
	\fill[pattern=vertical lines] (7.5,5) +(-1,-.5) rectangle ++(1,.5);
	\fill[gray] (5,5) +(-1.5,-0.5) rectangle ++(1.5,0.5);
	\fill[pattern=vertical lines] (5,5) +(-1.5,-0.5) rectangle ++(1.5,0.5);
	\fill[pattern=vertical lines] (8,4) +(-.5,-.5) rectangle ++(.5,.5);
	\fill[gray] (6,4) +(-1.5,-0.5) rectangle ++(1.5,0.5);
	\fill[pattern=vertical lines] (6,4) +(-1.5,-0.5) rectangle ++(1.5,0.5);
	\fill[pattern=horizontal lines] (6.5,3) +(-1,-0.5) rectangle ++(1,0.5);
	\fill[gray] (8,3) +(-.5,-0.5) rectangle ++(.5,0.5);
	\fill[pattern=horizontal lines] (8,3) +(-.5,-0.5) rectangle ++(.5,0.5);
	\fill[pattern=vertical lines] (8.4,2) +(-.1,-.5) rectangle ++(.1,.5);
	\fill[gray] (7.5,2) +(-1,-0.5) rectangle ++(.8,0.5);
	\fill[pattern=vertical lines] (7.5,2) +(-1,-0.5) rectangle ++(.8,0.5);
	\fill[pattern=horizontal lines] (8,1) +(-.5,-0.5) rectangle ++(.3,0.5);
	\fill[gray] (8.4,1) +(-.1,-0.5) rectangle ++(.1,0.5);
	\fill[pattern=horizontal lines] (8.4,1) +(-.1,-0.5) rectangle ++(.1,0.5);
	\fill[pattern=horizontal lines] (8.4,0) +(-.1,-0.5) rectangle ++(.1,0.5);
\foreach \x in {0,...,8}
	{\foreach \y in {\x,...,8}
	\node (\x,\y) at (8-\x,\y) {$\bullet$};}
\foreach \y/\e in {2/7,4/5,5/4,8/1}
	\node at (9,\y) {$\scriptstyle\e$};
\fill[pattern=vertical lines] (-1,4) +(-1,-1) rectangle ++(1,1);
\node at (-1,4) {$U_A$};
\fill[gray] (-3,4) +(-1,-1) rectangle ++(1,1);
\fill[pattern=vertical lines] (-3,4) +(-1,-1) rectangle ++(1,1);
\node at (-3,4) {$L_A$};
\fill[pattern=horizontal lines] (-3,6) +(-1,-1) rectangle ++(1,1);
\node at (-3,6) {$U_A^\vee$};
\fill[gray] (-1,6) +(-1,-1) rectangle ++(1,1);
\fill[pattern=horizontal lines] (-1,6) +(-1,-1) rectangle ++(1,1);
\node at (-1,6) {$R_A$};
\end{tikzpicture}$$

These coordinates give rise to subgroups
\begin{align*}
\fkut_A &= \{u\in \fkut_n\mid u_{ij}\neq 0 \text{ implies } (i,j)\in U_A\}\\
\fkl_A & = \{u\in \fkut_n\mid u_{ij}\neq 0\text{ implies }  (i,j)\in L_A\}\\
\fkut_A^\vee &= \{u\in \fkut_n\mid u_{ij}\neq 0\text{ implies } (i,j)\in U_A^\vee\}\\
\fkr_A & =  \{u\in \fkut_n\mid u_{ij}\neq 0 \text{ implies } (i,j)\in R_A\}\\
\fkp_A &= \fkl_A+\fkut_A^\vee +\fkut_A.
\end{align*}
Note that $\fkut_\ell= \fkp_A+\fkr_A$, $\fkut_{|A|}\cong \fkut_A$ and $\fkut_{\ell-|A|}\cong \fkut_A^\vee$.  In fact, if $\ell=m+n$ with $|A|=n$, consider the explicit isomorphisms
\begin{equation} \label{CheckIsomorphism}
\begin{array}{rccc} \tau'_A: & \fkut_A^\vee & \longrightarrow & \fkut_{m}\\
& e_{ij}(t) & \mapsto & e_{i-\#\{a\in A\mid a<i\},j-\#\{a\in A\mid a<i\}}(t),\end{array}
\end{equation}
and 
\begin{equation}\label{UnCheckIsomorphism}
\begin{array}{rccc} \tau_A: & \fkut_A & \longrightarrow & \fkut_{n}\\
& e_{ij}(t) & \mapsto & e_{i-\#\{b\in I_\ell-A\mid b<i\},j-m}(t),\end{array}
\end{equation}
where $e_{ij}(t)$ is the matrix with $t$ in the $i$th row and $j$th column and zeroes elsewhere.
Our functors, below, will pass up from $\fkut_A^\vee\times \fkut_A$ through $\fkp_A$ to $\fkut_\ell$ or down in reverse.

\subsection{The functor $\Sfl$ that goes up}

From this point on, we will use the constructions of Section \ref{SectionSubgroups}, with $\ell=m+n$ and $|A|=n$.  Let $A\subseteq \{1,2,\ldots,m+n\}$ with $|A|=n$.  The function $\Sfl$ will be the composition of two functors.  The first is inflation 
$$\begin{array}{rccc}
 \Inf_{\fkut_A^\vee\times \fkut_A}^{\fkp_A} : & \scf(\fkut_A^\vee\times \fkut_{A}) & \longrightarrow & \scf(\fkp_A)\\
& \chi\otimes \psi & \mapsto &\begin{array}{c@{\ }c@{\ }c} \fkp_A & \rightarrow & \CC\\ l+u'+u & \mapsto &  \chi(u')\psi(u)\end{array}\end{array} \quad \text{for $u'\in \fkut_A^\vee$, $l\in \fkl_A$, $u\in \fkut_A$.}$$
\begin{example}
Using the notation (\ref{MatrixFunctions}), we get
\begin{align*} \Inf_{\fkut_{\{1,3,6\}}^\vee\times \fkut_{\{1,3,6\}}}^{\fkp_{\{1,3,6\}}}(\chi^{2143}\otimes\chi^{312}) &= \Inf_{\fkut_{\{1,3,6\}}^\vee\times \fkut_{\{1,3,6\}}}^{\fkp_{\{1,3,6\}}}(\chi^{\iota^{-1}(1,0,1,0)}\otimes\chi^{\iota^{-1}(1,1,0)})\\ 
&= \Inf_{\fkut_{\{1,3,6\}}^\vee\times \fkut_{\{1,3,6\}}}^{\fkp_{\{1,3,6\}}}\biggl(
\left[\begin{smallmatrix}
\cdot & \One & \reg-\One & \reg\\ 
& \cdot & \One & \reg-\One \\
& & \cdot & \One\\
&  & & \cdot
\end{smallmatrix}\right]\otimes
\left[\begin{smallmatrix}
\cdot &  \color{gray}\One &  \color{gray}\reg-\One\\
& \cdot &  \color{gray}\One\\
&  & \cdot
\end{smallmatrix}\right]
\biggr)\\
&= \left[\begin{smallmatrix}
\cdot &  \One &  \One & \One & \One  & \color{gray}\One & \color{gray} \reg-\One\\
& \cdot & \One & \reg-\One & \reg & \bullet & \bullet \\ 
 & & \cdot & \One & \One & \One &  \color{gray} \One \\
& & & \cdot & \One & \reg-\One & \bullet \\
& & & & \cdot & \One & \bullet\\
& & &   & & \cdot & \One\\
& & & &   & & \cdot 
\end{smallmatrix}\right]\begin{smallmatrix}\color{gray} 1\\ \  \\  \color{gray} 3\\ \  \\ \ \\  \color{gray} 6\\ \ \end{smallmatrix}  ,
\end{align*}
where $\bullet$ denotes entries not in $\fkp_{\{1,3,6\}}$.
\end{example}

To obtain the second functor define the character $\chi^\faith_{\fkr_A}$ of $\fkr_A$ by
$$\chi^\faith_{\fkr_A}(r)=\prod_{i\notin A}\bigg( \prod_{j>i\text{ minimal}\atop (i,j)\in R_A } (\reg_{\FF_q^+}-\One)(r_{ij})\prod_{j>i\text{ not minimal}\atop (i,j)\in R_A} (\reg_{\FF_q^+})(r_{ij})\bigg).$$
If we take the supercharacter theory of $\fkr_A$ obtained by using the sublattice of $\cL_n$ consisting of subgroups that are in $\fkr_A$, then  $\chi^\faith_{\fkr_A}$ is the unique faithful supercharacter of that theory. 
\begin{example}
Continuing the example, we get
$$\chi^\faith_{\fkr_{\{1,3,6\}}} =\left[\begin{smallmatrix}
\cdot &   \bullet &  \bullet &  \bullet &  \bullet  &  \bullet & \bullet\\
& \cdot &  \bullet &  \bullet &  \bullet & \reg-\One & \reg  \\ 
 & & \cdot &  \bullet &  \bullet &  \bullet & \bullet \\
& & & \cdot &  \bullet &  \bullet & \reg-\One \\
& & & & \cdot &  \bullet &  \reg-\One\\
& & &   & & \cdot &  \bullet\\
& & & &   & & \cdot 
\end{smallmatrix}\right]\begin{smallmatrix} \color{gray} 1\\ \  \\  \color{gray} 3\\ \  \\ \ \\  \color{gray} 6\\ \ \end{smallmatrix},$$
where in this case $\bullet$ indicates the entries not in $\fkr_{\{1,3,6\}}$.
\end{example}

Define
$$\begin{array}{rccccc}  
& & & \cf(\fkut_{m+n}) \\
& & & \begin{tikzpicture} \node[rotate=90] at (0,0) {$\subseteq$};\end{tikzpicture}\\
\Str_{\fkp_A}^{\fkut_{m+n}}: & \cf(\fkp_A) & \longrightarrow & \cf(\fkp_A\times \fkr_A) & \longrightarrow  & \scf(\fkut_{m+n})\\
& \psi & \mapsto & \psi\otimes \dd\frac{\chi^\faith_{\fkr_A}}{\chi^\faith_{\fkr_A}(0)} & \mapsto &  \dd\pi_B\Big(\psi\otimes \frac{\chi^\faith_{\fkr_A}}{\chi^\faith_{\fkr_A}(0)}\Big), \end{array}$$
where $\pi_B$ is as in (\ref{SymMap}).  
\begin{example}
Continuing our example, 
\begin{align*}
\Str_{\fkp_{\{1,3,6\}}}^{\fkut_{7}}\left(\left[\begin{smallmatrix}
\cdot &  \One &  \One & \One & \One  & \One & \reg-\One\\
& \cdot & \One & \reg-\One & \reg & \bullet & \bullet \\ 
 & & \cdot & \One & \One & \One &\One \\
& & & \cdot & \One & \reg-\One & \bullet \\
& & & & \cdot & \One & \bullet\\
& & &   & & \cdot & \One\\
& & & &   & & \cdot 
\end{smallmatrix}\right]\begin{smallmatrix} \color{gray} 1\\ \  \\  \color{gray} 3\\ \  \\ \ \\  \color{gray} 6\\ \ \end{smallmatrix} \right)
&\mapsto \frac{1}{(q-1)^3q} \left[\begin{smallmatrix}
\cdot &  \One &  \One & \One & \One  & \One & \reg-\One\\
& \cdot & \One & \reg-\One & \reg & \reg-\One & \reg \\ 
 & & \cdot & \One & \One & \One & \One \\
& & & \cdot & \One & \reg-\One & \reg-\One \\
& & & & \cdot & \One & \reg-\One\\
& & &   & & \cdot & \One\\
& & & &   & & \cdot 
\end{smallmatrix}\right]\\
&\mapsto \frac{1}{|B_7^7|(q-1)^3q} \sum_{\underline{b}\in B_7^7} \left[\begin{smallmatrix}
\cdot &  \One &  \One & \One & \One  & \One & \reg-\One\\
& \cdot & \One & \reg-\One & \reg & \reg-\One & \reg \\ 
 & & \cdot & \One & \One & \One & \One \\
& & & \cdot & \One & \reg-\One & \reg-\One \\
& & & & \cdot & \One & \reg-\One\\
& & &   & & \cdot & \One\\
& & & &   & & \cdot 
\end{smallmatrix}\right]\cdot\underline{b}.
\end{align*}
Note that $\reg_{\FF_q}=\sum_{t\in \FF_q}\vartheta_t$ and $\reg-\One=\sum_{t\in \FF_q^\times}\vartheta_t$.   Let's focus on row 4 in the example, using the notation (\ref{ThetaNotation}).  Here, 
\begin{align} \sum_{b\in B_7} [\begin{array}{ccccccc} & & & \cdot & \One & \reg-\One &\reg-\One \end{array}] b &= \sum_{b\in B_7} [\begin{array}{ccccccc} & & & \cdot & \vartheta_0 & \sum_{s\in \FF_q^\times} \vartheta_s & \sum_{t\in \FF_q^\times} \vartheta_t\end{array}] b\label{ProjectionComputation}\\
 & = \sum_{b\in B_7\atop s,t\in \FF_q^\times} \vartheta_{[\begin{array}{ccccccc} & & & \cdot & 0 & s & t\end{array}]b} \notag \\
&=\sum_{b\in B_7\atop s,t\in \FF_q^\times} \vartheta_{[\begin{array}{ccccccc} & & & \cdot & 0 & b_{66}s & b_{77}t+b_{67}s\end{array}]} \notag\\
&=\frac{|B_7|(q-1)}{q}\sum_{s'\in \FF_q^\times\atop b_{67}'\in \FF_q} \vartheta_{[\begin{array}{ccccccc} & & & \cdot & 0 & s' & b_{67}'\end{array}]}\notag \\
&= \frac{|B_7|(q-1)}{q} [\begin{array}{ccccccc} & & & \cdot & \One & \reg-\One & \reg\end{array}]. \notag
\end{align}
By a similar computation for the other rows, we obtain
\begin{align*}\Str_{\fkp_{\{1,3,6\}}}^{\fkut_{7}}\left(\left[\begin{smallmatrix}
\cdot &  \One &  \One & \One & \One  & \One & \reg-\One\\
& \cdot & \One & \reg-\One & \reg & \bullet & \bullet \\ 
 & & \cdot & \One & \One & \One &\One \\
& & & \cdot & \One & \reg-\One & \bullet \\
& & & & \cdot & \One & \bullet\\
& & &   & & \cdot & \One\\
& & & &   & & \cdot 
\end{smallmatrix}\right] \right) &= \frac{1}{(q-1)q^3} \left[\begin{smallmatrix}
\cdot &  \One &  \One & \One & \One  & \One & \reg-\One\\
& \cdot & \One & \reg-\One & \reg & \reg & \reg \\ 
 & & \cdot & \One & \One & \One & \One \\
& & & \cdot & \One & \reg-\One & \reg \\
& & & & \cdot & \One & \reg-\One\\
& & &   & & \cdot & \One\\
& & & &   & & \cdot 
\end{smallmatrix}\right]\\
&=  \frac{1}{(q-1)q^3} \chi^{\iota^{-1}(5,1,4,1,1,1,0)}= \frac{1}{(q-1)q^3} \chi^{7245613}.
\end{align*}
\end{example}

Their composition is the \emph{stretchflation functor}
$$\Sfl_{\fkut_A^\vee\times \fkut_A}^{\fkut_{m+n}}=\Str_{\fkp_A}^{\fkut_{m+n}}\circ \Inf_{\fkut_A^\vee\times \fkut_A}^{\fkp_A}.$$

For $A\subseteq \{1,2,\ldots, m+n\}$ with $|A|=n$ and complement $\Ac$.  For $w\in S_m$ and $v\in S_n$, define the permutation $w\bowtie_A v\in S_{m+n}$ by
$$(w\bowtie_A v)^{-1}(j)=\left\{\begin{array}{ll} w^{-1}(\#\{i\in \Ac\mid i\leq j\}) & \text{if $j\in \Ac$,}\\ 
v^{-1}(\#\{a\in A\mid a\leq j\})+m & \text{if $j\in A$.}\end{array}\right.$$

\begin{lemma}\label{ProductLemma}
Let $A\subseteq \{1,2,\ldots, m+n\}$ with $|A|=n$ and complement $\Ac$.  Let $w\in S_m$ and $v\in S_n$.  
\begin{enumerate}
\item[(a)] The permutation $w\bowtie_A v$ satisfies
$$\iota_j(w\bowtie_A v)=\iota_{j-\#\{b\in A\mid b<j\}}(w)\quad \text{for $j\in \Ac$ and}\quad \iota^\vee_a(w\bowtie_A v)=\iota^\vee_{a-\#\{i\in\Ac\mid i<a\}}(v)\quad \text{for $a\in A$}.$$
\item[(b)] The character
$$\Sfl_{\fkut_A^\vee\times \fkut_A}^{\fkut_{m+n}}\Big(\frac{\chi^w}{\chi^w(0)}\otimes \frac{\chi^v}{\chi^v(0)}\Big)=\frac{\chi^{w\bowtie_A v}}{\chi^{w\bowtie_Av}(0)}.$$
\end{enumerate}
\end{lemma}

\begin{proof} 
(a) We show that the permutation obtained from the inversion table statistics give $w\bowtie_A v$.   Let $y\in S_k$, and $\iota(y)=(\iota_1(y),\ldots,\iota_k(y))$.  Then for $1\leq j\leq k$, we have
$$y^{-1}(j)=\begin{array}{l}\text{the $1+\iota_j(y)$th smallest element in increasing order of the set }\\  \{1,2,\ldots, k\}-\{y^{-1}(1),\ldots, y^{-1}(j-1)\}.\end{array}$$
Suppose $j\notin A$.  Then  $\iota_j(w\bowtie_A v)=\iota_{j-k}(w)$ where $k=\#\{a\in A\mid a<j\}$.  Thus,
\begin{equation}\label{NotInA}
(w\bowtie_A v)^{-1}(j)=\begin{array}{l} \text{the $1+\iota_{j-k}(w)$th element in increasing order of the set}\\ \{1,2,\ldots, m+n\}-\{(w\bowtie_A v)^{-1}(1),\ldots, (w\bowtie_A v)^{-1}(j-1)\}.\end{array}
\end{equation}
Similarly, if $j\in A$ and $k=\#\{i\in \Ac\mid i<j\}$, then
\begin{equation}\label{InA}
(w\bowtie_A v)^{-1}(j)=\begin{array}{l} \text{the $1+m-k+\iota_{j-k}(v)$th element in increasing order of the set}\\ \{1,2,\ldots, m+n\}-\{(w\bowtie_A v)^{-1}(1),\ldots, (w\bowtie_A v)^{-1}(j-1)\}.\end{array}
\end{equation}
If we show that for every $j\in A$, $(w\bowtie_A v)^{-1}(j)>m$ and for every $j\not\in A$, $(w\bowtie_A v)^{-1}(j)\leq m$, the desired result follows from \ref{NotInA} and \ref{InA}. Suppose these properties hold for every positive integer less than $j$. Then the set $\{(w\bowtie_A v)^{-1}(1),\ldots, (w\bowtie_A v)^{-1}(j-1)\}$ contains $|\{a\in A\mid a<j\}|$ elements bigger than $m$ and $|\{1\leq i<j\mid i\notin A\}|$ elements less than $m$. It now follows that if $j\in \Ac$, then 
 the $1+\iota_{j-k}(w)$th element in increasing order of the set $\{1,2,\ldots, m+n\}-\{(w\bowtie_A v)^{-1}(1),\ldots, (w\bowtie_A v)^{-1}(j-1)\}$ is less than or equal to $m$, and if $j\in A$,  the $1+m-k+\iota_{j-k}(v)$th element in increasing order of the set  $\{1,2,\ldots, m+n\}-\{(w\bowtie_A v)^{-1}(1),\ldots, (w\bowtie_A v)^{-1}(j-1)\}$ is greater than  $m$.
 
(b) This proof generalizes  the running example computation of this section.    Note that as a class function in $\cf(\fkut_{m+n})$,
$$\Inf_{\fkut_A^\vee\times \fkut_A}^{\fkp_A} \Big(\frac{\chi^w}{\chi^w(0)}\otimes \frac{\chi^v}{\chi^v(0)}\Big) \otimes \frac{\chi^\faith_{\fkr_A}}{\chi^\faith_{\fkr_A}(0)} =\frac{1}{\chi^w(0)\chi^v(0)\chi^\faith_{\fkr_A}(0)}\psi_{A}^{w,v},$$
where for $1\leq i<j\leq m+n$,
$$(\psi_{A}^{w,v})_{ij}=\left\{\begin{array}{ll}
\One & \text{if $i\in A$, $j\leq m+n-\iota^\vee_{i-\#\{c\in \Ac\mid c<i\}}(v)$ or $i\in \Ac$, $j-i\leq \iota_{i-\#\{a\in A\mid a<i\}}(w)$},\\
\reg-\One & \text{if $i\in A$, $j=m+n- \iota_{i-\#\{c\in \Ac\mid c<i\}}^\vee(v)+1$, }\\
\reg-\One & \text{if $i\in \Ac$, $j-i-1\in \Big\{ \iota_{i-\#\{a\in A\mid a<i\}}(w),\#\{c\in \Ac\mid c>i\}\Big\}$,}\\
\reg & \text{otherwise.}
\end{array}\right.$$
When applying $\pi_B$, we apply multiple copies of the Borel subgroup $B_{m+n}$ to the rows.  In a row $i\in A$, the start (from left to right) is a sequence of (possibly 0) $\One$'s and the end is a sequence of (possibly 0) $\reg$'s, and if there are both, then they must be separated by exactly one $\reg-\One$.  By Corollary \ref{SupercharacterFactorization} this is a standard row for a supercharacter, so by (\ref{SupercharacterClassFunctions}) the right action of $b\in B_{m+n}$ leaves such a row invariant.  

In a row $i\in \Ac$, we again begin the row with (possibly 0) $\One$'s and end the row with (possibly 0) $\reg$'s.  If both occur, then they must be separated by either one $\reg-\One$ or two $\reg-\One$'s which in turn are separated by (possibly 0) $\reg$'s. If there is at most one $\reg-\One$ in such a row, then $b\in B_{m+n}$ again leaves it invariant.  We may therefore focus on the rows of the form
$$[\hspace{1cm}\cdot\ \underbrace{\One\ \cdots \ \One}_{\text{$r$ terms}}\ \reg-\One \underbrace{\reg\ \cdots\ \reg}_{\text{$s$ terms}}\ \reg-\One\ \underbrace{\reg\ \cdots\ \reg}_{\text{$t$ terms}}].$$
where $i\in \Ac$, and $r,s,t$ could all be zero.  In this case, a computation as in (\ref{ProjectionComputation}) shows that 
\begin{align*}\sum_{b\in B_{m+n}}& [\hspace{1cm}\cdot\ \underbrace{\One\ \cdots \ \One}_{\text{$r$ terms}}\ \reg-\One \underbrace{\reg\ \cdots\ \reg}_{\text{$s$ terms}}\ \reg-\One\ \underbrace{\reg\ \cdots\ \reg}_{\text{$t$ terms}}]b\\
&=|B_{m+n}|\frac{q-1}{q}[\hspace{1cm}\cdot\ \underbrace{\One\ \cdots \ \One}_{\text{$r$ terms}}\ \reg-\One \underbrace{\reg\ \cdots\ \reg}_{\text{$s+t+1$ terms}}].
\end{align*}
Thus,
$$\pi_B(\psi_A^{w,v})=\left(\frac{q-1}{q}\right)^{\#\{i\in \Ac\mid \iota_{i-\#\{a\in A\mid a<i\}}(w)<\#\{c\in \Ac\mid c>i\}\}}\tilde\psi_A^{w,v},
$$
where for $1\leq i<j\leq m+n$,
$$(\tilde\psi_{A}^{w,v})_{ij}=\left\{\begin{array}{ll}
\One & \text{if $i\in A$, $j\leq m+n-\iota^\vee_{i-\#\{c\in \Ac\mid c<i\}}(v)$ or $i\in \Ac$, $j-i\leq \iota_{i-\#\{a\in A\mid a<i\}}(w)$},\\
\reg-\One & \text{if $i\in A$, $j=m+n- \iota_{i-\#\{c\in \Ac\mid c<i\}}^\vee(v)+1$, }\\
\reg-\One & \text{if $i\in \Ac$, $j-i=\iota_{i-\#\{a\in A\mid a<i\}}(w)+1$,}\\
\reg & \text{otherwise.}
\end{array}\right.$$
By inspection,
$$\tilde\psi_{A}^{w,v}=\chi^{w\bowtie_A v}\quad \text{and}\quad \left(\frac{q-1}{q}\right)^{\#\{i\in \Ac\mid \iota_{i-\#\{a\in A\mid a<i\}}(w)<\#\{c\in \Ac\mid c>i\}\}}\frac{1}{\chi^w(0)\chi^v(0)\chi^\faith_{\fkr_A}(0)}=\frac{1}{\chi^{w\bowtie_A v}(0)}.$$

\end{proof}

\begin{remark} \label{ProductHeuristic} On diagrams, we shuffle the inverse permutations or the columns of the diagrams according to the columns picked out by $A$ (indicated in gray in the example, below).  For example,
$$
\underset{314625\atop \color{gray} 251364}{\left[\begin{tikzpicture}[scale=.5,baseline=1.65cm]
\foreach \i/\j in {1/3,2/1,3/4,4/6,5/2,6/5}
	\draw (\j,1) -- (\j,7-\i) -- (6,7-\i); 
\foreach \i/\j in {1/1,1/2,3/2,4/2,4/5}
	\node at (\j,7-\i) {$\circ$};
\end{tikzpicture}\right]}\bowtie_{\{1,4,5,8\}} 
\underset{2413\atop \color{gray} 3142}{\left[\begin{tikzpicture}[scale=.5,baseline=1.2cm]
\foreach \i/\j in {1/2,2/4,3/1,4/3}
	\draw[dotted,thick] (\j,1) -- (\j,5-\i) -- (4,5-\i); 
\foreach \i/\j in {1/1,2/1,2/3}
	\node at (\j,5-\i) {$\square$};
\end{tikzpicture}\right]}=
\underset{627\hspace{1pt} 10\hspace{1pt}  394815\atop \color{gray} 9257\hspace{1pt} 10\hspace{1pt}  13864}{\left[\begin{tikzpicture}[scale=.5,baseline=2.7cm]
\foreach \i/\j in {2/2,3/7,6/9,1/6,4/10,5/3}
	\draw (\j,1) -- (\j,11-\i) -- (10,11-\i); 
\foreach \i/\j in {7/4,9/1,10/5,8/8}
	\draw[dotted,thick] (\j,1) -- (\j,11-\i) -- (10,11-\i); 
\foreach \i/\j in {1/1,2/1,3/1,4/1,5/1,6/1,7/1,8/1,1/4,3/4,4/4,6/4,1/5,3/5,4/5,6/5,8/5,4/8,6/8}
	\node at (\j,11-\i) {$\square$};
\foreach \i/\j in {1/2,1/3,3/3,4/3,4/9}
	\node at (\j,11-\i) {$\circ$};
\end{tikzpicture}\right]}
$$
Thus, we can think of the $\bowtie_A$ as a shifted shuffle of the columns of the Rothe diagrams.  
\end{remark}

By the above remark, if we first invert the permutations, we actually start shuffling the permutations, so we use the $\star$-duality.

\begin{theorem} \label{GoingUpCombinatorics}
Let $A\subseteq \{1,2,\ldots, m+n\}$ with $|A|=n$ and complement $A'$.  Then  for $w\in S_m$ and $v\in S_n$,
$$\Sfl_{\fkut_A^\vee\times \fkut_A}^{\fkut_n}\Big((\chi^w)^\star\otimes (\chi^{v})^\star\Big)^\star=\chi^{w\shuffle_A v}.$$
\end{theorem}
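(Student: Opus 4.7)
The plan is to reduce the theorem entirely to Lemma \ref{ProductLemma} plus the definition of the $\star$-duality. Since $(\chi^w)^\star = \chi^{w^{-1}}$ by definition, the left-hand side is $\chi^{(w^{-1}\bowtie_A v^{-1})^{-1}}$ after one application of Lemma \ref{ProductLemma}(a) followed by $\star$. So the theorem is equivalent to the purely combinatorial identity
\begin{equation*}
(w^{-1}\bowtie_A v^{-1})^{-1} = w\shuffle_A v \qquad \text{in $S_{m+n}$,}
\end{equation*}
and this is what I would verify directly using Lemma \ref{ProductLemma}(b) applied to the pair $(w^{-1},v^{-1})$.

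The verification splits by whether an index $j\in\{1,\dots,m+n\}$ lies in $A$ or not. For $j\notin A$, Lemma \ref{ProductLemma}(b) gives $(w^{-1}\bowtie_A v^{-1})^{-1}(j) = (w^{-1})^{-1}(j-\#\{a\in A\mid a<j\}) = w(j-\#\{a\in A\mid a<j\})$, which matches the first case of the definition of $w\shuffle_A v$ in Section 2.2. For $j\in A$, Lemma \ref{ProductLemma}(b) yields $v(j-\#\{i\notin A\mid i<j\})+m$; this matches the second case $v(\#\{a\leq j\mid a\in A\})+m$ once one observes the tautology
\begin{equation*}
j - \#\{i\notin A\mid i<j\} \;=\; \#\{a\in A\mid a\leq j\} \qquad\text{whenever } j\in A,
\end{equation*}
which is immediate from partitioning $\{1,\dots,j\}$ into its intersection with $A$ and its complement.

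Thus I would structure the write-up as follows: (1) rewrite the left-hand side as $\bigl(\chi^{w^{-1}\bowtie_A v^{-1}}\bigr)^\star = \chi^{(w^{-1}\bowtie_A v^{-1})^{-1}}$ using Lemma \ref{ProductLemma}(a) and the definition of $\star$; (2) compute $(w^{-1}\bowtie_A v^{-1})^{-1}(j)$ using Lemma \ref{ProductLemma}(b); (3) compare termwise with the definition of $w\shuffle_A v$ in the two cases $j\notin A$ and $j\in A$. No serious obstacle is expected — the only subtlety is the bookkeeping conversion between $\#\{i\notin A\mid i<j\}$ and $\#\{a\in A\mid a\leq j\}$ in the $j\in A$ case, which is a one-line counting identity. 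In effect the theorem says that while $\bowtie_A$ shuffles the \emph{columns} of Rothe diagrams (as Remark \ref{ProductHeuristic} illustrates), conjugating by $\star$ (i.e.\ inverting the permutations) converts this column-shuffle into the familiar row-shuffle $\shuffle_A$ on one-line notations.
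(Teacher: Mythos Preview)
Your proposal is correct and follows essentially the same approach as the paper's own proof: reduce via Lemma~\ref{ProductLemma}(a) and the definition of $\star$ to the combinatorial identity $(w^{-1}\bowtie_A v^{-1})^{-1}=w\shuffle_A v$, then verify that identity pointwise using Lemma~\ref{ProductLemma}(b). Your write-up is in fact slightly more explicit than the paper's, since you spell out the counting identity $j-\#\{i\notin A\mid i<j\}=\#\{a\in A\mid a\leq j\}$ needed in the $j\in A$ case, which the paper leaves implicit.
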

\begin{proof}
By Lemma \ref{ProductLemma} (b),
$$\Sfl_{\fkut_A^\vee\times \fkut_A}^{\fkut_n}\Big((\chi^w)^\star\otimes (\chi^{v})^\star\Big)^\star=\chi^{(w^{-1}\bowtie_A v^{-1})^{-1}}.$$
Thus, it suffices to show that $w\shuffle_A v=(w^{-1}\bowtie_A v^{-1})^{-1}$.  By Lemma \ref{ProductLemma}(a),
\begin{align*}
(w^{-1}\bowtie_A v^{-1})^{-1}(j)&=\left\{\begin{array}{ll} w(\#\{i\in \Ac\mid i\leq j\}) & \text{if $j\in \Ac$,}\\
m+v(\#\{a\in A\mid a\leq j\}) & \text{if $j\in A$.} \end{array}\right.\\
&=(w\shuffle_A v) (j),
\end{align*}
as desired.
\end{proof}

\subsection{The functor $\Dela$ that goes down}

Let $A\subseteq \{1,2,\ldots, m+n\}$ with $|A|=n$.
The Frobenius adjoint to inflation is \emph{deflation}
$$\begin{array}{rccc}
 \Def_{\fkut_A^\vee\times \fkut_A}^{\fkp_A} : &  \scf(\fkp_A)& \longrightarrow & \scf(\fkut_A^\vee\times \fkut_{A}) \\
& \chi & \mapsto & \begin{array}{c@{\ }c@{\ }c} \fkut_A^\vee \times \fkut_A & \rightarrow & \CC\\ (u',u) & \mapsto & \dd\frac{1}{|\fkl_A|}\sum_{l\in \fkl_A} \chi(l+u'+u).\end{array} \end{array}$$
We will also use \emph{collapsing}
$$\begin{array}{rccc}
 \Col_{\fkp_A}^{\fkut_{m+n}} : &  \scf(\fkut_n)& \longrightarrow & \scf(\fkp_A) \\
& \chi & \mapsto & \begin{array}{c@{\ }c@{\ }c} \fkp_A & \rightarrow & \CC\\ y & \mapsto & \dd\frac{1}{|\fkr_A|}\sum_{r\in \fkr_A} \frac{\chi^\faith_{\fkr_A}(r)}{\chi^\faith_{\fkr_A}(0)}\chi(y+r),\end{array} \end{array}$$
which does not seem to be a standard functor in the literature.  

Their composition is the \emph{delapsing} functor
$$\Dela_{\fkut_A^\vee\times \fkut_A}^{\fkut_{m+n}} = \Def_{\fkut_A^\vee\times \fkut_A}^{\fkp_A} \circ \Col_{\fkp_A}^{\fkut_{m+n}}.$$
\begin{example}  Consider the supercharacter
$$\chi^{5317426}=\chi^{\iota^{-1}(2,4,1,2,0,1,0)}=
\left[\begin{smallmatrix}
\cdot & \One & \One & \reg-\One & \reg & \reg & \reg\\
 & \cdot & \One & \One & \One & \One &  \reg-\One \\
 & & \cdot & \One & \reg-\One & \reg& \reg \\
&  & & \cdot & \One & \One & \reg-\One \\
&  & & & \cdot & \reg-\One & \reg  \\
&  & & & & \cdot & \One \\
&  & & & & & \cdot  
\end{smallmatrix}\right].$$
In this case, it is instructive to consider different subsets of $\{1,2,\ldots,7\}$.  For $A=\{2,4,6\}$, 
\begin{align*} \Col_{\fkp_A}^{\fkut_7} (\chi^{5317426})&=\frac{1}{|\fkr_A|} \sum_{r\in \fkr_A}  \frac{\chi^\faith_{\fkr_A}(r)}{\chi^\faith_{\fkr_A}(0)} \chi^{5317426}(r)\left[\begin{smallmatrix}
\cdot & \One & \One & \reg-\One & \bullet & \bullet & \bullet \\
 &  \color{gray} 2 & \One & \One & \One & \One &  \reg-\One \\
 & & \cdot & \One & \reg-\One & \bullet & \bullet \\
&  & &\color{gray} 4 & \One & \One & \reg-\One \\
&  & & & \cdot & \reg-\One & \bullet  \\
&  & & & & \color{gray} 6& \One \\
&  & & & & & \cdot  
\end{smallmatrix}\right]\\
&=\left[\begin{smallmatrix}
\cdot & \One & \One & \reg-\One & \bullet & \bullet & \bullet \\
 &  \color{gray} 2 & \One & \One & \One & \One &  \reg-\One \\
 & & \cdot & \One & \reg-\One & \bullet & \bullet \\
&  & &\color{gray} 4 & \One & \One & \reg-\One \\
&  & & & \cdot & \reg-\One & \bullet  \\
&  & & & & \color{gray} 6& \One \\
&  & & & & & \cdot  
\end{smallmatrix}\right].
\end{align*}
since the sum is only nonzero when $r=0$ (the $\bullet$-entries are the ones in $R_A$).  Apply the deflation functor to get
$$\Dela_{\fkp_A}^{\fkut_7} (\chi^{5317426})=\frac{1}{|\fkl_A|} \sum_{l\in \fkl_A}\chi^{5317426}(l) \left[\begin{smallmatrix}
\cdot & \One & \One & \reg-\One & \bullet & \bullet & \bullet \\
 &  \color{gray} 2 & \star & \star & \star & \One &  \reg-\One \\
 & & \cdot & \One & \reg-\One & \bullet & \bullet \\
&  & &\color{gray} 4 & \star & \star & \reg-\One \\
&  & & & \cdot & \reg-\One & \bullet  \\
&  & & & & \color{gray} 6& \star \\
&  & & & & & \cdot  
\end{smallmatrix}\right]= \left[\begin{smallmatrix}
\cdot & \One & \One & \reg-\One & \bullet & \bullet & \bullet \\
 &  \color{gray} 2 & \star & \star & \star & \One &  \reg-\One \\
 & & \cdot & \One & \reg-\One & \bullet & \bullet \\
&  & &\color{gray} 4 & \star & \star & \reg-\One \\
&  & & & \cdot & \reg-\One & \bullet  \\
&  & & & & \color{gray} 6& \star \\
&  & & & & & \cdot  
\end{smallmatrix}\right],$$
since $\chi^{5317426}$ is trivial on all elements of $\fkl_A$ (the $\star$-entries are the ones in $L_A$). 

On the other hand, for $B=\{2,3,4\}$, 
\begin{align*} \Dela_{\fkp_B}^{\fkut_7} (\chi^{5317426})&=\Def_{\fkut_B^\vee\times \fkut_B}^{\fkp_B}\left(
\left[\begin{smallmatrix}
\cdot & \One & \One & \reg-\One & \bullet & \bullet & \bullet\\
 & \color{gray} 2 & \One & \One & \One & \One &  \reg-\One \\
 & & \color{gray} 3 & \One & \reg-\One & \reg& \reg \\
&  & &\color{gray} 4 & \One & \One & \reg-\One \\
&  & & & \cdot & \reg-\One & \reg  \\
&  & & & & \cdot & \One \\
&  & & & & & \cdot  
\end{smallmatrix}\right]\right)\\
&=\frac{1}{|\fkl_A|} \sum_{l\in \fkl_A}\chi^{5317426}(l) 
\left[\begin{smallmatrix}
\cdot & \One & \One & \reg-\One & \bullet & \bullet & \bullet\\
 & \color{gray} 2 & \star & \star & \star & \One &  \reg-\One \\
 & & \color{gray} 3 & \star & \star  & \star & \reg \\
&  & &\color{gray} 4 & \star & \star & \star \\
&  & & & \cdot & \reg-\One & \reg  \\
&  & & & & \cdot & \One \\
&  & & & & & \cdot  
\end{smallmatrix}\right]\\
&=0
\end{align*}
since
$$\frac{1}{|\fkl_A|} \sum_{l\in \fkl_A}\chi^{5317426}(l) =\frac{1}{|\fkl_A|}\Big(\sum_{l_{35}\in \FF_q} (\reg-\One)(l_{35})\Big) \sum_{l\in \fkl_A\atop l_{35}=0}\chi^{5317426}(l)=0,$$
by the orthogonality of characters of $\FF_q^+$. Lastly, for $C=\{1,4,5\}$,
$$\Col_{\fkp_C}^{\fkut_7} (\chi^{5317426})=\frac{1}{|\fkr_C|} \sum_{r\in \fkr_C}  \frac{\chi^\faith_{\fkr_C}(r)}{\chi^\faith_{\fkr_C}(0)} \chi^{5317426}(r)\left[\begin{smallmatrix}
 \color{gray} 1 & \One & \One & \reg-\One & \reg & \reg & \reg\\
 & \cdot & \One & \One & \One & \bullet &  \bullet \\
 & & \cdot & \One & \reg-\One & \bullet & \bullet  \\
&  & &  \color{gray} 4 & \One & \One & \reg-\One \\
&  & & &  \color{gray} 5 & \reg-\One & \reg  \\
&  & & & & \cdot & \One \\
&  & & & & & \cdot  
\end{smallmatrix}\right]=0,$$
since 
$$\frac{1}{|\fkr_C|} \sum_{r\in \fkr_C}  \frac{\chi^\faith_{\fkr_C}(r)}{\chi^\faith_{\fkr_C}(0)} \chi^{5317426}(r)=\frac{1}{|\fkr_C|} \Big(\sum_{r_{26}\in \FF_q} (\reg-\One)(r_{26})\One(r_{26}) \Big)\sum_{r\in \fkr_C\atop r_{26}=0}  \frac{\chi^\faith_{\fkr_C}(r)}{\chi^\faith_{\fkr_C}(0)} \chi^{5317426}(r)=0.$$
\end{example}

The following lemma formalizes exactly when we get nonzero values under the delapsing functor.

\begin{lemma}\label{SupportLemma}
Let $A\subseteq \{1,2,\ldots, m+n\}$ with $|A|=n$. For $w\in S_{m+n}$,  
\begin{enumerate}
\item[(a)] If $\Dela_{\fkut_A^\vee\times \fkut_{A}}^{\fkut_{m+n}}(\chi^w)\neq 0$, then
$$\{(a,a+\iota_a(w)+1)\mid a\in A\}\subseteq U_A\quad \text{and}\quad \{(j,j+\iota_j(w))\mid j\in \Ac, \iota_j(w)\neq 0\}\subseteq U_A^\vee.$$
\item[(b)] If $\Dela_{\fkut_A^\vee\times \fkut_{A}}^{\fkut_{m+n}}(\chi^w)\neq 0$, then for each $i$, $w(i)\in A$ implies $i>m$.
\end{enumerate}
\end{lemma}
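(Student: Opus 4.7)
The strategy is to evaluate $\Dela_{\fkut_A^\vee\times\fkut_A}^{\fkut_n}(\chi^w)$ at an arbitrary $(u',u)\in\fkut_A^\vee\times\fkut_A$ and read off both (a) and (b) directly from the resulting formula. By definition this value equals $\chi^w(e_{L_A}\,u'\,u\,e_{R_A}^\perp)$. Substituting the definitions of $e_{L_A}$ and $e_{R_A}^\perp$, and using that $\fkut_n$ is additive (so the group-algebra product just places entries into the disjoint regions $L_A,U_A^\vee,U_A,R_A$), this becomes a double sum over maps $l\colon L_A\to\FF_q$ and $r\colon R_A\to\FF_q$ of $\chi^w$ applied to the assembled matrix. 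Applying Corollary~\ref{SupercharacterFactorization} factors $\chi^w$ as a product over positions $(i,j)$ with $i<j$, where each factor $f_{ij}^w\in\{\One_{\FF_q^+},\reg_{\FF_q^+}-\One_{\FF_q^+},\reg_{\FF_q^+}\}$ is determined by the comparison of $j-i$ with $\iota_i(w)$. Interchanging sum and product separates the computation into one-variable sums: for $(i,j)\in L_A$, the contribution is $\tfrac{1}{q}\sum_{t\in\FF_q}\One(t)f_{ij}^w(t)$, while for $(i,j)\in R_A$, it is $\tfrac{1}{q(q-1)}\sum_{t\in\FF_q}(\reg-\One)(t)f_{ij}^w(t)$.

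By direct evaluation (standard orthogonality of $\FF_q^+$ characters), the $L_A$-factor equals $1$ unless $f_{ij}^w=\reg-\One$ in which case it is $0$, and the $R_A$-factor equals $1$ unless $f_{ij}^w=\One$ in which case it is $0$. Hence $\Dela(\chi^w)\neq 0$ forces the two combinatorial conditions: no $(i,j)\in L_A$ satisfies $j-i=\iota_i(w)+1$, and no $(i,j)\in R_A$ satisfies $j-i\leq\iota_i(w)$. Because $L_A,U_A,U_A^\vee,R_A$ partition $\{(i,j):1\leq i<j\leq n\}$, the first restriction is exactly that $(a,a+\iota_a(w)+1)\in U_A$ whenever $a\in A$ (and the position lies in the grid), and the second is exactly that $(j,j+\iota_j(w))\in U_A^\vee$ whenever $j\notin A$ and $\iota_j(w)\neq 0$. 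This establishes (a).

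For (b), I proceed by induction on $n$, with $n=0$ and the case $A=\emptyset$ trivial. For the inductive step assume $A\neq\emptyset$, set $a_k:=\max A$, and observe that applying (a) at $a_k$ (where $\alpha_{a_k}=0$) forces $a_k+\iota_{a_k}(w)+1>n$ and hence $\iota_{a_k}(w)=n-a_k$; equivalently, every value strictly greater than $a_k$ lies to the left of $a_k$ in the one-line notation of $w$. Now suppose for contradiction that $v:=w(n)\notin A$. If $v<n$ then $\iota_v(w)=n-v\neq 0$, so (a) applied to $v$ gives $(v,n)\in U_A^\vee$, forcing $\alpha_v=0$ and thus $v>a_k$; if instead $v=n$ then trivially $v>a_k$. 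Either way $v>a_k$, yet $v$ sits at position $n$ to the right of $a_k$'s position, contradicting $\iota_{a_k}(w)=n-a_k$. Therefore $w(n)\in A$.

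Set $b:=w(n)\in A$ and let $w'\in S_{n-1}$ be the standardization of $w(1)w(2)\cdots w(n-1)$, with $\tilde A'\subseteq\{1,\ldots,n-1\}$ the corresponding standardization of $A\setminus\{b\}$. Since $b$ occupies the last position, the identities $\iota_i(w')=\iota_i(w)$ for $i<b$ and $\iota_i(w')=\iota_{i+1}(w)$ for $i\geq b$ hold, and the parameter $\alpha$ shifts compatibly; a short case check then verifies that $w'$ satisfies the (a)-conditions relative to $\tilde A'$. By induction, $(w')^{-1}(\tilde A')=\{n-|A|+1,\ldots,n-1\}$, which together with $w^{-1}(b)=n$ yields $w^{-1}(A)=\{n-|A|+1,\ldots,n\}$, proving (b). The most delicate piece is the orthogonality computation: the normalization constants $1/q$ and $1/(q(q-1))$ and the three-case dependence on $f_{ij}^w$ must be tracked carefully. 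The inductive step for (b) is conceptually cleaner but requires the standardization bookkeeping to split correctly across the cases $i<b$ and $i\geq b$.
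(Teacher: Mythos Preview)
Your argument for part (a) is correct and follows the paper's approach exactly: both expand $\Dela(\chi^w)$ via the factorization of Corollary~\ref{SupercharacterFactorization} and read off that the $L_A$-factor vanishes precisely when $f_{ij}^w=\reg-\One$ and the $R_A$-factor vanishes precisely when $f_{ij}^w=\One$.

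For part (b) you take a genuinely different route. The paper argues directly and globally: it picks $i_0$ maximal with $w(i_0)\notin A$, uses the $U_A^\vee$-condition at $w(i_0)$ to show that no $h<i_0$ can have $w(h)\in A$ with $w(h)>w(i_0)$, and then uses the $U_A$-condition at a hypothetical minimal $h_0<i_0$ with $w(h_0)\in A$ to reach a contradiction. This extremal argument is short but requires spotting the right pair of indices. Your approach instead peels off the last position by induction: you force $w(n)\in A$ via the $U_A$-condition at $\max A$ together with the $U_A^\vee$-condition at $w(n)$, then standardize to $(w',\tilde A')$ and recurse. Your method is more mechanical and makes the structure transparent, at the cost of the standardization bookkeeping (which you correctly summarize but do not spell out).

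One point to tighten in your induction: to invoke the inductive hypothesis you need $\Dela(\chi^{w'})\neq 0$, not merely that $(w',\tilde A')$ satisfies the combinatorial conditions of (a). Your own computation in (a) already gives the converse you need---when every $L_A$- and $R_A$-factor equals $1$, the remaining product over $U_A^\vee\cup U_A$ is strictly positive at $(u',u)=(0,0)$---so the two are equivalent. State this equivalence once before launching the induction (or, equivalently, phrase the induction as ``the conditions in (a) imply the conclusion of (b)'') and the argument is complete.
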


\begin{proof} (a) We prove the contrapositive in two cases.  
Suppose there exists $a\in A$ with $(a,a+\iota_a(w)+1)\notin U_A$.  Then $(a,a+\iota_a(w)+1)=(a_0,b_0)\in L_A$, so $b_0-a_0=\iota_{a_0}(w)+1$.  By Corollary \ref{SupercharacterFactorization}, for $x\in \fkut_A^\vee\times \fkut_A\subseteq \fkut_{m+n}$, 
\begin{align*}
\Dela_{\fkut_A^\vee\times \fkut_{A}}^{\fkut_{m+n}}&(\chi^w)(x)\\
&=(\reg_{\FF_q^+}-\One_{\FF_q^+})\Big(\frac{1}{q}\sum_{t\in \FF_q}t\Big)\hspace{-.25cm} \prod_{1\leq i<j\leq m+n\atop j-i\leq \iota_i(v)}\hspace{-.25cm} \One_{\FF_q^+}(x_{ij}) \hspace{-.25cm}\prod_{1\leq i<j\leq m+n\atop{ j-i= \iota_i(v)+1\atop (i,j)\neq (a_0,b_0)}} \hspace{-.25cm}(\reg_{\FF_q^+}-\One_{\FF_q^+})(x_{ij}) \hspace{-.25cm}\prod_{1\leq i<j\leq m+n\atop j-i> \iota_i(v)+1}\hspace{-.25cm}\reg_{\FF_q^+}(x_{ij}),
\end{align*}
where
$$(\reg_{\FF_q^+}-\One_{\FF_q^+})\Big(\frac{1}{q}\sum_{t\in \FF_q}t\Big)=\frac{1}{q}\Big(q-1+(q-1)(-1)\Big)=0.$$

Suppose instead that there exists $j\in \Ac$ with $\iota_j(w)\neq 0$ and $(j,j+\iota_j(w))\notin U_A^\vee$.  Then $(j,j+\iota_j(w))=(i_0,j_0)\in R_A$ with $j_0-i_0<\iota_{i_0}(w)+1$; let $j_0$ be minimal with this property.   For $x\in \fkut_A^\vee\times \fkut_A\subseteq \fkut_{m+n}$, 
\begin{align*}
&\Dela_{\fkut_A^\vee\times \fkut_{A}}^{\fkut_n}(\chi^w)(x)\\
&=(\One_{\FF_q^+})\Big(\frac{1}{q}\sum_{t\in \FF_q}\frac{(\reg_{\FF_q^+}-\One_{\FF_q^+})(t)}{(\reg_{\FF_q^+}-\One_{\FF_q^+})(0)}t\Big)\hspace{-.25cm} \prod_{1\leq i<j\leq m+n\atop { j-i\leq \iota_i(v)\atop (i,j)\neq (i_0,j_0)}}\hspace{-.25cm} \One_{\FF_q^+}(x_{ij}) \hspace{-.25cm}\prod_{1\leq i<j\leq m+n\atop j-i= \iota_i(v)+1} \hspace{-.25cm}(\reg_{\FF_q^+}-\One_{\FF_q^+})(x_{ij}) \hspace{-.25cm}\prod_{1\leq i<j\leq m+n\atop j-i> \iota_i(v)+1}\hspace{-.25cm}\reg_{\FF_q^+}(x_{ij}),
\end{align*}
where
$$(\One_{\FF_q^+})\Big(\frac{1}{q}\sum_{t\in \FF_q}\frac{(\reg_{\FF_q^+}-\One_{\FF_q^+})(t)}{(\reg_{\FF_q^+}-\One_{\FF_q^+})(0)}t\Big)=\frac{1}{q}\Big(1+(q-1)\frac{-1}{q-1}\Big)=0.$$

(b) If $\Dela_{\fkut_A^\vee\times \fkut_{A}}^{\fkut_n}(\chi^w)\neq 0$, then both conditions from (a) must be satisfied.

Let $i_0$ be maximal such that $w(i_0)\notin A$.  Then $(w(i_0),w(i_0)+\iota_{w(i_0)}(w))\in U_A^\vee$ implies
$$\#\{a\in A\mid a>w(i_0)\}\leq m+n-\iota_{w(i_0)}(w)-w(i_0)=\#\{h>i_0\mid w(h)>w(i_0)\}.$$
In other words,
\begin{equation}\label{Local1}
\#\{w(h)\in A\mid h<i_0,w(h)>w(i_0)\}\leq \#\{h>i_0\mid w(h) \notin A,w(h)>w(i_0)\}=0.
\end{equation}
If there is no $w(h)\in A$ with $h<i_0$, then we are done.  Otherwise, let $h_0$ be minimal such that $w(h_0)\in A$.  Then by (\ref{Local1}) we must have $w(h_0)<w(i_0)$.  Since $(w(h_0),w(h_0)+\iota_{w(i_0)}(w)+1)\in U_A$, we have
$$\#\{a\in A\mid a>w(h_0)\}\geq m+ n-\iota_{w(h_0)}(w)-w(h_0)=\#\{i>h_0\mid w(i)>w(h_0)\},$$
or
$$0=\#\{w(i)\in A\mid i<h_0,w(i)>w(h_0)\}\geq \#\{w(i)\notin A\mid i>h_0,w(i)>w(h_0)\}\geq 1$$
a contradiction.  Thus, there is no $w(h)\in A$ with $h<i_0$.
\end{proof}

This lemma implies that the delapsing functor gives a standardized deconcatenation on the supercharacter basis.

\begin{theorem} \label{GoingDownCombinatorics}
Let $A\subseteq \{1,2,\ldots, m+n\}$ with $|A|=n$ and complement $A'$.   For $w\in S_{m+n}$,
$$\Dela_{\fkut_A^\vee\times \fkut_A}^{\fkut_{m+n}} (\chi^w)=\left\{\begin{array}{ll} \chi^{w_{\leq m}}\otimes \chi^{w_{>m}} & \text{if 
$w^{-1}(A)=\{m+1,\ldots, m+n\}$,}\\ 0 & \text{otherwise.}\end{array}\right.$$
\end{theorem}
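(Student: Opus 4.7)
The plan is to handle the vanishing direction via Lemma~\ref{SupportLemma}(b) and then, in the non-vanishing case, compute the formula entry by entry using the factorization of Corollary~\ref{SupercharacterFactorization}. For vanishing, if $w^{-1}(A) \neq \{m+1, \ldots, m+n\}$, then some $i \leq m$ has $w(i) \in A$ (since $|A| = n$), which contradicts Lemma~\ref{SupportLemma}(b) in ambient dimension $m+n$ (where $n - |A|$ in that lemma becomes $m$); thus $\Dela(\chi^w) = 0$.

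For the non-vanishing case, assume $w^{-1}(A) = \{m+1,\ldots, m+n\}$ and evaluate $\Dela(\chi^w)(u',u) = \chi^w(e_{L_A} u' u\, e_{R_A}^\perp)$. Because $\fkut_{m+n}$ is additively abelian and the four position sets $L_A, U_A^\vee, U_A, R_A$ partition $\{(i,j) : 1 \leq i < j \leq m+n\}$, the group-algebra element $e_{L_A} u' u\, e_{R_A}^\perp$ is a linear combination of matrices whose entries in these four regions are independent. Combined with the entry-wise factorization of $\chi^w$, linearity yields $\Dela(\chi^w)(u',u) = \prod_{(i,j)} g_{ij}$ where $g_{ij}$ depends only on the region of $(i,j)$. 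A short one-variable calculation against $e_{ij}(\ast)$ or $e_{ij}^\perp(\ast)$ gives: on $U_A^\vee$, $g_{ij} = f^w_{ij}(u'_{ij})$; on $U_A$, $g_{ij} = f^w_{ij}(u_{ij})$; on $L_A$, $g_{ij} = 1$ unless $j - i = \iota_i(w) + 1$ (when it vanishes); on $R_A$, $g_{ij} = 1$ unless $j - i \leq \iota_i(w)$ (when it vanishes). Here $f^w_{ij}$ denotes the one-variable factor from Corollary~\ref{SupercharacterFactorization}.

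Next, I would verify that the hypothesis rules out both kinds of vanishing. For $i \in A$ one has $w^{-1}(i) > m$, and splitting the inversion count at the position boundary $m$ gives $\iota_i(w) = \#\{b \notin A : b > i\} + C_i$ where $C_i \geq 0$ counts inversions contributed by positions in $(m, w^{-1}(i))$; combined with the defining inequality $j - i \leq \#\{b \notin A : b > i\}$ for $(i,j) \in L_A$, this forces $j - i \leq \iota_i(w)$, excluding $j - i = \iota_i(w) + 1$. Dually, for $i \notin A$ one has $w^{-1}(i) \leq m$, so all contributing positions lie in $\{1,\ldots, m\}$ and carry non-$A$ values, giving $\iota_i(w) \leq \#\{b \notin A : b > i\}$; since $(i,j) \in R_A$ forces $j - i > \#\{b \notin A : b > i\}$, this excludes $j - i \leq \iota_i(w)$. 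Hence $\Dela(\chi^w)(u', u) = \prod_{(i,j) \in U_A^\vee} f^w_{ij}(u'_{ij}) \cdot \prod_{(i,j) \in U_A} f^w_{ij}(u_{ij})$.

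It remains to identify these two products with $\chi^{w_{\leq m}}(\tau'_A(u'))$ and $\chi^{w_{>m}}(\tau_A(u))$ via the isomorphisms (\ref{CheckIsomorphism}) and (\ref{UnCheckIsomorphism}). Under $\tau'_A$ the difference $j - i$ is preserved, and the standardization of the non-$A$ values preserves their inversion counts, giving $\iota_i(w) = \iota_{i'}(w_{\leq m})$ with $(i',j') = \tau'_A(i,j)$; under $\tau_A$ the difference $j - i$ decreases by $\#\{b \notin A : b > i\}$, which exactly matches the decrease $\iota_i(w) - \iota_{i'}(w_{>m}) = \#\{b \notin A : b > i\}$ for $i \in A$ (using $C_i = \iota_{i'}(w_{>m})$ from the argument above). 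Thus the trichotomy $j - i \leq \iota_i(w)$, $j - i = \iota_i(w) + 1$, or $j - i > \iota_i(w) + 1$ is preserved under both identifications, so $f^w_{ij}$ pulls back to $f^{w_{\leq m}}_{i'j'}$ on $U_A^\vee$ and to $f^{w_{>m}}_{i'j'}$ on $U_A$, yielding the claimed tensor product. The main obstacle is the careful bookkeeping of how the inversion count $\iota_i(w)$ decomposes relative to the position boundary $m$ and how the row/column shifts of $\tau'_A, \tau_A$ cancel the resulting discrepancies; the underlying calculations of the $g_{ij}$ on $L_A$ and $R_A$ are routine one-variable sums.
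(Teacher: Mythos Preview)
Your proof is correct and follows essentially the same route as the paper: Lemma~\ref{SupportLemma}(b) for vanishing, the entry-wise factorization of Corollary~\ref{SupercharacterFactorization} for the non-vanishing case, and the isomorphisms $\tau'_A,\tau_A$ to match the two tensor factors. The only (minor) difference is that you verify the $L_A$/$R_A$ factors equal $1$ directly from the hypothesis $w^{-1}(A)=\{m+1,\ldots,m+n\}$ via the inversion-count decomposition, whereas the paper cites Lemma~\ref{SupportLemma}(a); your direct check is arguably cleaner logically, since Lemma~\ref{SupportLemma}(a) has non-vanishing of $\Dela$ as its hypothesis.
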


\begin{proof}   By Lemma \ref{SupportLemma} (b), we may assume $w(i)\in A$ only if $i>m$. By Lemma \ref{SupportLemma} (a), if $a\in A$ and $(a,b)\in L_A$, then $b-a\leq\iota_a(w)$, and if $i\in \Ac$ and $(i,j)\in R_A$, then $j-i\geq \iota_i(w)+1$.   Thus, when we apply Corollary \ref{SupercharacterFactorization} to $\chi^w$, for $(a,b)\in L_A$ we have
$$\One_{\FF_q^+}\Big(\frac{1}{q}\sum_{t\in \FF_q} t\Big)=\frac{1}{q}q=1,$$
and for $(i,j)\in R_A$ either
$$(\reg_{\FF_q^+}-\One_{\FF_q^+})\Big(\frac{1}{q}\sum_{t\in \FF_q}\frac{(\reg_{\FF_q^+}-\One_{\FF_q^+})(t)}{(\reg_{\FF_q^+}-\One_{\FF_q^+})(0)}t\Big)=\frac{1}{q}\Big((q-1)+(q-1)\frac{(-1)^2}{q-1}\Big)=1,$$
or
$$\reg_{\FF_q^+}\Big(\frac{1}{q}\sum_{t\in \FF_q}\frac{(\reg_{\FF_q^+}-\One_{\FF_q^+})(t)}{(\reg_{\FF_q^+}-\One_{\FF_q^+})(0)}t\Big)=\frac{1}{q} q=1\quad \text{or}\quad \reg_{\FF_q^+}\Big(\frac{1}{q}\sum_{t\in \FF_q}\frac{\reg_{\FF_q^+}(t)}{\reg_{\FF_q^+}(0)}t\Big)=\frac{1}{q} \reg_{\FF_q}(0)=1.$$
By the definition of delapsing we therefore have
\begin{align*}
\Dela_{\fkut_A^\vee\times \fkut_A}^{\fkut_{m+n}} (\chi^w)(u',u)= \bigg(&\prod_{(i,j)\in U_A^\vee\atop j-i\leq \iota_i(w)}\hspace{-.25cm} \One_{\FF_q^+}(u'_{ij}) \hspace{-.25cm}\prod_{(i,j)\in U_A^\vee\atop j-i= \iota_i(w)+1} \hspace{-.25cm}(\reg_{\FF_q^+}-\One_{\FF_q^+})(u'_{ij}) \hspace{-.25cm}\prod_{(i,j)\in U_A^\vee\atop j-i> \iota_i(w)+1}\hspace{-.25cm}\reg_{\FF_q^+}(u'_{ij})\bigg)\\
&\cdot  \bigg(\prod_{(i,j)\in U_A\atop j-i\leq \iota_i(w)}\hspace{-.25cm} \One_{\FF_q^+}(u_{ij}) \hspace{-.25cm}\prod_{(i,j)\in U_A\atop j-i= \iota_i(w)+1} \hspace{-.25cm}(\reg_{\FF_q^+}-\One_{\FF_q^+})(u_{ij}) \hspace{-.25cm}\prod_{(i,j)\in U_A\atop j-i> \iota_i(w)+1}\hspace{-.25cm}\reg_{\FF_q^+}(u_{ij})\bigg).
\end{align*}

Using the isomorphism (\ref{CheckIsomorphism}),
$$\chi^{w_{\leq m}}(\tau'_A(u'))=\chi^w(u'),$$
since $(j-\#\{a\in A\mid a<i\})-(i-\#\{a\in A\mid a<i\})=j-i$ and $\iota_{j}(w)=\iota_{j-\#\{a<j\mid a\in A\}}(w_{\leq m})$ for all $j\in \Ac$.

Next using the isomorphism (\ref{UnCheckIsomorphism}),
$$\chi^{w_{>m}}(\tau_A(u))=\chi^w(u), $$
since $j-m-(i-\#\{b\in \Ac\mid b<i \})=j-i-\#\{b\in \Ac\mid b>i\}$ and
$$\iota_{j-\#\{b\in \Ac\mid b<j \}}(w_{>m})=\iota_{j}(w)-\#\{b\in \Ac\mid b>j \}.$$

We can conclude that
$$\Dela_{\fkut_A^\vee\times \fkut_A}^{\fkut_{m+n}} (\chi^w)(u',u)=\chi^{w_{\leq m}}(u')\chi^{w_{>m}}(u),$$
as desired.
\end{proof}

\begin{remark} \label{CoProductHeuristic}
For deconcatenating, we split off the last $k$ rows (if we are deconcatenating $k$ digits off the end),
$$\underset{831\hspace{1pt}10\hspace{1pt}746925\atop \color{gray} 3926\hspace{1pt}10\hspace{1pt}57184}{\left[\begin{tikzpicture}[scale=.5,baseline=2.7cm]
\foreach \i/\j in {7/6,8/9,9/2,10/5}
	\draw[dotted,thick]  (\j,1) -- (\j,11-\i) -- (10,11-\i); 
\foreach \i/\j in {1/8,2/3,3/1,6/4,4/10,5/7}
	\draw (\j,1) -- (\j,11-\i) -- (10,11-\i); 
\foreach \i/\j in {7/2,7/5,8/2,8/5}
	\node at (\j,11-\i) {$\square$};
\foreach \i/\j in {1/1,1/2,1/3,1/4,1/5,1/6,1/7,4/2,4/4,4/5,4/6,4/7,4/9,5/2,5/4,5/5,5/6, 2/1,2/2,6/2}
	\node at (\j,11-\i) {$\circ$};
\end{tikzpicture}\right]}=\underset{521643\atop \color{gray} 326514}{\left[\begin{tikzpicture}[scale=.5,baseline=1.65cm]
\foreach \i/\j in {1/5,2/2,3/1,4/6,5/4,6/3}
	\draw (\j,1) -- (\j,7-\i) -- (6,7-\i); 
\foreach \i/\j in {1/1,1/2,1/3,1/4,2/1,4/3,4/4,5/3}
	\node at (\j,7-\i) {$\circ$};
\end{tikzpicture}\right]}\ .\ 
\underset{3412\atop \color{gray} 3412}{\left[\begin{tikzpicture}[scale=.5,baseline=1.2cm]
\foreach \i/\j in {1/3,2/4,3/1,4/2}
	\draw[dotted,thick] (\j,1) -- (\j,5-\i) -- (4,5-\i); 
\foreach \i/\j in {1/1,1/2,2/1,2/2}
	\node at (\j,5-\i) {$\square$};
\end{tikzpicture}\right]}.$$
This can  be viewed as a deshuffle of the inverse permutations picked out by the columns of $A$ (as indicated by the gray inverses of the permutations).
\end{remark}

\subsection{The Malvenuto--Reutenauer algebra and $\scf(\fkut)$}

Using the functors from the previous section, we may define a representation theoretic product and coproduct on $\scf(\fkut)$.   For the product we define
$$\begin{array}{ccc} \scf(\fkut_m)\otimes \scf(\fkut_n) & \longrightarrow & \scf(\fkut_{m+n})\\
\alpha\otimes \beta & \mapsto & \dd\sum_{A\subseteq \{1,2,\ldots, m+n\}\atop |A|=n} \Sfl_{\fkut_A^\vee\times \fkut_A}^{\fkut_{m+n}} (\alpha^\star\otimes \beta^\star)^\star,\end{array}$$
and coproduct given by
$$\begin{array}{rccc}  \Delta: & \scf(\fkut_n) & \longrightarrow & \dd\bigoplus_{m=0}^n\scf(\fkut_{n-m})\otimes \scf(\fkut_m)\\
&\alpha & \mapsto & \dd\sum_{A\subseteq \{1,2,\ldots, n\}} \Dela_{\fkut_A^\vee\times \fkut_A}^{\fkut_{n}} (\alpha).\end{array}$$

From Theorems \ref{GoingUpCombinatorics} and \ref{GoingDownCombinatorics} we obtain the following corollary.

\begin{corollary}\label{SupercharacterStructureConstants}\hfill

\begin{enumerate}
\item[(a)] For $w\in S_m$ and $v\in S_n$,
$$\chi^w\chi^v=\sum_{y\in w\shuffle v} \chi^y.$$
\item[(b)] For $w\in S_n$,
$$\Delta(\chi^w)=\sum_{m=0}^n \chi^{w_{\leq m}}\otimes \chi^{w_{>m}}.$$
\end{enumerate}
\end{corollary}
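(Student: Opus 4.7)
The proof is essentially a bookkeeping exercise that combines the two preceding structural theorems of the section. For part (a), I would first unfold the definition of the product in $\scf(\fkut)$:
$$\chi^w\chi^v = \sum_{A\subseteq \{1,\ldots,m+n\},\, |A|=n} \Exfl_{\fkut_A^\vee\times \fkut_A}^{\fkut_{m+n}}\big((\chi^w)^\star\otimes (\chi^v)^\star\big)^\star.$$
Theorem \ref{GoingUpCombinatorics} identifies each summand as $\chi^{w\shuffle_A v}$. What remains is the observation that the map $A \mapsto w\shuffle_A v$ is injective: from the defining formula for $\shuffle_A$, the set $A$ is recovered from the image permutation as the set of positions $i$ with $(w\shuffle_A v)(i) > m$. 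Consequently $\sum_A \chi^{w\shuffle_A v} = \sum_{y\in w\shuffle v}\chi^y$, matching the product \eqref{FQSymProduct} in $\FQSym$.

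For part (b), I would unfold the coproduct definition,
$$\Delta(\chi^w) = \sum_{A\subseteq\{1,\ldots,n\}} \Dela_{\fkut_A^\vee\times \fkut_A}^{\fkut_n}(\chi^w),$$
and apply Theorem \ref{GoingDownCombinatorics} termwise. That theorem forces every summand to vanish unless $w^{-1}(A) = \{n-|A|+1,\ldots,n\}$. For each $k=|A|\in\{0,1,\ldots,n\}$ there is exactly one such $A$, namely $A = w(\{n-k+1,\ldots, n\})$, and the corresponding surviving term is $\chi^{w_{\leq n-k}}\otimes \chi^{w_{>n-k}}$. Reindexing $m = n-k$ collects these into $\sum_{m=0}^n \chi^{w_{\leq m}}\otimes \chi^{w_{>m}}$, matching the coproduct \eqref{FQSymCoProduct}.

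The only real step of content---beyond what is already packed into Theorems \ref{GoingUpCombinatorics} and \ref{GoingDownCombinatorics}---is the verification of these two bijections between indexing sets. Neither presents a serious obstacle; both follow directly from unpacking the definitions of $\shuffle_A$ and of the $m$-standardized deconcatenation. I do not expect any subtlety to arise, so I would keep the write-up to a few short lines appealing to each of the two theorems and noting the uniqueness of $A$ in each case.
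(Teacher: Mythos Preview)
Your proposal is correct and matches the paper's approach: the paper simply states that the corollary follows from Theorems~\ref{GoingUpCombinatorics} and~\ref{GoingDownCombinatorics}, and your write-up supplies exactly the bookkeeping (the injectivity of $A\mapsto w\shuffle_A v$ and the unique surviving $A$ for each $k$ in the coproduct) that makes this deduction explicit.
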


In particular, by comparing with (\ref{FQSymProduct}) and (\ref{FQSymCoProduct}) we obtain a Hopf algebra isomorphism to the Malvenuto--Reutenauer algebra. 

\begin{corollary}\label{MainIsomorphism}
The linear function
$$\begin{array}{r@{\ }c@{\ }c@{\ }c} \mathrm{ch}: & \scf(\fkut) & \longrightarrow & \FQSym\\ & \chi^w & \mapsto & F_w\end{array}$$
is a Hopf algebra isomorphism.
\end{corollary}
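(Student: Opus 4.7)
The plan is to observe that essentially all the work has already been done in Theorems \ref{GoingUpCombinatorics} and \ref{GoingDownCombinatorics}, whose packaged form is Corollary \ref{SupercharacterStructureConstants}; the corollary now reduces to matching these formulas with the defining formulas (\ref{FQSymProduct}) and (\ref{FQSymCoProduct}) of $\FQSym$.

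First I would check that $\mathrm{ch}$ is a graded linear bijection. In each degree $n$, both $\{\chi^w\mid w\in S_n\}$ and $\{F_w\mid w\in S_n\}$ are bases indexed by the same set $S_n$, so the map sending $\chi^w$ to $F_w$ extends uniquely to a degree-preserving linear isomorphism. In particular, $\dim\scf(\fkut_n)=n!=\dim\FQSym_n$ for every $n\geq 0$.

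Next I would verify compatibility with the product. By Corollary \ref{SupercharacterStructureConstants}(a),
$$\mathrm{ch}(\chi^w\chi^v)=\mathrm{ch}\Big(\sum_{y\in w\shuffle v}\chi^y\Big)=\sum_{y\in w\shuffle v}F_y=F_wF_v=\mathrm{ch}(\chi^w)\mathrm{ch}(\chi^v),$$
where the third equality is (\ref{FQSymProduct}). Extending linearly gives multiplicativity on all of $\scf(\fkut)$. For the unit, the degree-zero pieces $\scf(\fkut_0)$ and $\FQSym_0$ are both the one-dimensional $\CC$-span of the empty-permutation basis element, and $\mathrm{ch}$ identifies them. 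Similarly, for the coproduct, Corollary \ref{SupercharacterStructureConstants}(b) gives
$$(\mathrm{ch}\otimes\mathrm{ch})\Delta(\chi^w)=\sum_{m=0}^n F_{w_{\leq m}}\otimes F_{w_{>m}}=\Delta(F_w)=\Delta(\mathrm{ch}(\chi^w))$$
by (\ref{FQSymCoProduct}); the counit is the projection onto degree zero and is automatically preserved.

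Since $\FQSym$ is a Hopf algebra and $\mathrm{ch}$ intertwines its unit, product, counit, and coproduct with the corresponding structure maps on $\scf(\fkut)$, it follows that $\scf(\fkut)$ inherits the structure of a Hopf algebra from $\FQSym$ via $\mathrm{ch}$ (in particular, the antipode on $\scf(\fkut)$ is determined, and $\mathrm{ch}$ automatically intertwines antipodes). There is essentially no obstacle at this stage; all the real content sits in establishing Theorems \ref{GoingUpCombinatorics} and \ref{GoingDownCombinatorics}, where one must show that the representation-theoretic functors $\Exfl$ and $\Dela$ reproduce shifted shuffles and standardized deconcatenations on the supercharacter basis. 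The corollary itself is then a one-line consequence of the matching of structure constants.
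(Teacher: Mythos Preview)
Your proof is correct and follows essentially the same approach as the paper: the paper simply states that comparing Corollary \ref{SupercharacterStructureConstants} with (\ref{FQSymProduct}) and (\ref{FQSymCoProduct}) yields the isomorphism, and your argument spells out exactly this comparison in detail. If anything, you are more explicit than the paper about why $\mathrm{ch}$ is a graded linear bijection and about the automatic compatibility of the antipode.
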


\begin{remark}
By computing the structure constants we get away with not checking that $\Sfl$ with the $\star$-involution and $\Dela$ are Hopf compatible.  There should be an algebraic proof of this, but that would likely also require a better representation theoretic interpretation of the $\star$-involution.
\end{remark}

\subsection{The dual Hopf algebra $\scf(\fkut)^*$}
 
It is well-known that the Malvenuto--Reutenauer algebra $\FQSym$ is self-dual -- albeit in an interesting fashion (see, for example, \cite{GR}); however, we think it is instructive to study this duality from a representation theoretic point of view.  The dual Hopf algebra can be constructed from a choice of bilinear form on $\scf(\fkut)$.  However, the representation theoretic construction supplies a canonical inner product on characters given by
$$\begin{array}{r@{\ }c@{\ }c@{\ }c} 
\langle\cdot,\cdot\rangle : &\scf(\fkut_m)\otimes \scf(\fkut_n) & \longrightarrow & \CC\\
& \gamma\otimes \psi & \mapsto & \dd\left\{\begin{array}{@{}ll@{}}\dd\frac{1}{|\fkut_n|} \sum_{u\in \fkut_n} \gamma(u)\overline{\psi(u)} & \text{if $m=n$},\\
0 & \text{otherwise.}\end{array}\right.
\end{array}$$
Note that  for $w,v\in S_n$,
$$\langle \delta_w,\delta_v\rangle=\left\{\begin{array}{@{}ll@{}}\frac{|\Cl_w|}{|\fkut_n|} & \text{if $v=w$,} \\ 0 & \text{otherwise,}\end{array}\right. \qquad \text{and}\qquad \langle \chi^w,\chi^v\rangle=\left\{\begin{array}{@{}ll@{}} \chi^w(0) & \text{if $v=w$,} \\ 0 & \text{otherwise.}\end{array}\right. $$
  However, the permutation characters are not orthogonal (e.g. they all contain the trivial character).  To construct the dual Hopf algebra, it will be helpful to construct the adjoint functors with respect to this inner product.

\begin{proposition}
Let $A\subseteq \{1,2,\ldots, m+ n\}$ with $|A|=n$.  For $\psi\in \scf(\fkut_{m+n})$ and $\varphi\otimes \eta\in \scf(\fkut_A^\vee)\otimes \scf(\fkut_A)$,
$$\langle\Sfl_{\fkut_A^\vee\times \fkut_A}^{\fkut_{m+n}} (\varphi\otimes \eta),\psi\rangle=\langle \varphi\otimes \eta, \Dela_{\fkut_A^\vee\times \fkut_A}^{\fkut_{m+n}}(\psi)\rangle.$$ 
\end{proposition}
\begin{proof}
By definition,
\begin{align*}
\langle &\Sfl(\varphi\otimes\eta),\psi\rangle = \frac{1}{|\fkut_{m+n}|} \sum_{x\in \fkut_{m+n}} \Sfl_A(\varphi\otimes\eta)(x) \overline{\psi(x)}\\
&=\frac{1}{|\fkut_{m+n}|} \sum_{x\in \fkut_{m+n}}\frac{1}{|B_{m+n}|^{m+n}}\sum_{\underline{b}\in B_{m+n}^{m+n}}  \bigg(\Big(\Inf_{\fkut_{A^\vee}\times \fkut_{A}}^{\fkp_A}(\varphi\otimes\eta)\Big) \otimes (\chi^\faith_{\fkr_A})^*\bigg)\underline{b}(x) \overline{\psi(x)}.
\end{align*}
Note that for $y\in \cf(\fkut_{m+n})$, $x\in \fkut_{m+n}$ and $\underline{b}=(b_1,\ldots, b_n)\in B_{m+n}$, 
$$(y\underline{b})(x)=y((\underline{b}x^T)^T),$$
where $x^T$ is the transpose of $x$ and $\underline{b}$ acts on the columns of $z\in \fkut_n$ individually, so
$$(b_1,\ldots, b_{m+n})[z_{*1}\ z_{*2} \ \cdots\ z_{*m+n}]=[b_1z_{*1}\ b_2z_{*2} \ \cdots\ b_{m+n}z_{*m+n}].$$
Thus,
\begin{align*}
\langle \Sfl(\varphi\otimes\eta),\psi\rangle
 &= \frac{1}{|\fkut_{m+n}||B_{m+n}|^{m+n}}\hspace{-.45cm} \sum_{x\in \fkut_{m+n}\atop \underline{b}\in B_{m+n}^{m+n}}\hspace{-.15cm} \bigg(\Big(\Inf_{\fkut_{A^\vee}\times \fkut_{A}}^{\fkp_A}(\varphi\otimes\eta)\Big) \otimes (\chi^\faith_{\fkr_A})^*\bigg)((\underline{b}(x)^T)^T) \overline{\psi(x)}\\
 &= \frac{1}{|\fkut_{m+n}||B_{m+n}|^{m+n}}\hspace{-.45cm}  \sum_{x\in \fkut_{m+n}\atop \underline{b}\in B_{m+n}^{m+n}} \hspace{-.15cm}  \bigg(\Big(\Inf_{\fkut_{A^\vee}\times \fkut_{A}}^{\fkp_A}(\varphi\otimes\eta)\Big) \otimes (\chi^\faith_{\fkr_A})^*\bigg)(x) \overline{\psi((\underline{b}(x)^T)^T)}\\
  &= \frac{1}{|\fkut_{m+n}||B_{m+n}|^{m+n}}\hspace{-.45cm}  \sum_{x\in \fkut_{m+n}\atop \underline{b}\in B_{m+n}^{m+n}} \hspace{-.15cm} \bigg(\Big(\Inf_{\fkut_{A^\vee}\times \fkut_{A}}^{\fkp_A}(\varphi\otimes\eta)\Big) \otimes (\chi^\faith_{\fkr_A})^*\bigg)(x) \overline{(\psi \underline{b}) (x)}.
  \end{align*}
  However, $\psi\in \scf(\fkut_{m+n})$, so $\psi\underline{b}=\psi$ and 
\begin{align*}
\langle  \Sfl(\varphi\otimes\eta),\psi\rangle
&= \frac{1}{|\fkut_{m+n}|} \sum_{u\in \fkut_A^\vee, v\in \fkut_A\atop l\in \fkl_A,r\in \fkr_A} \bigg(\Big(\Inf_{\fkut_{A^\vee}\times \fkut_{A}}^{\fkp_A}(\varphi\otimes\eta)\Big) \otimes (\chi^\faith_{\fkr_A})^*\bigg)(l+u+v+r) \overline{\psi(l+u+v+r)}\\
&=\frac{1}{|\fkut_{m+n}|} \sum_{u\in \fkut_A^\vee, v\in \fkut_A\atop l\in \fkl_A,r\in \fkr_A} \varphi(u)\eta(v)\frac{\chi^\faith_{\fkr_A}(r)}{\chi^\faith_{\fkr_A}(0)} \overline{\psi(l+u+v+r)}\\
&=\langle \varphi\otimes \eta, \Dela_{\fkut_A^\vee\times \fkut_A}^{\fkut_{m+n}}(\psi)\rangle,
\end{align*}
as desired.
\end{proof}

We therefore obtain the dual Hopf algebra structure on $\scf(\fkut)$ with product
$$\begin{array}{ccc} \scf(\fkut_m)\otimes \scf(\fkut_n) & \longrightarrow & \scf(\fkut_{m+n})\\
\alpha\otimes \beta & \mapsto & \dd\sum_{A\subseteq \{1,2,\ldots, m+n\}\atop |A|=n}\Sfl_{\fkut_A^\vee\times \fkut_A}^{\fkut_{m+n}} (\alpha\otimes \beta),\end{array}$$
and coproduct 
$$\begin{array}{rccc}  \Delta: & \scf(\fkut_n) & \longrightarrow & \dd\bigoplus_{m=0}^n\scf(\fkut_{n-m})\otimes \scf(\fkut_m)\\
&\alpha & \mapsto & \dd\sum_{A\subseteq \{1,2,\ldots, n\}} \Dela_{\fkut_A^\vee\times \fkut_A}^{\fkut_{n}} (\alpha^\star)^\star.\end{array}$$
With respect to our inner product $\langle\cdot,\cdot \rangle$ we have dual bases
\begin{description}
\item[Superclass identifiers.] By the orthogonality relation,
$$\delta_w^*=\frac{1}{|\Cl_w|}\delta_w.$$
\item[Supercharacters.] By the orthogonality relation,
$$(\chi^w)^*=\frac{1}{\chi^w(0)}\chi^w.$$
\end{description}

\begin{remark}
From the results of the previous section, the product and coproduct of $\frac{\chi^w}{\chi^w(0)}$ shuffle and deconcatenate along the columns of the diagram of $w$.  Alternatively, we shuffle inverses and deconcatenate the inverse word.  It follows that 
$$\begin{array}{ccc} \scf(\fkut) & \longrightarrow & \scf(\fkut)\\ \chi^w & \mapsto & (\chi^w)^\star\end{array}$$
is a Hopf algebra isomorphism, where the standard Hopf structure on the left becomes the dual structure from this section on the right.  Note that this differs somewhat from the standard isomorphism since we are using a slightly different pairing.
\end{remark}

\section{The permutation character basis of $\scf(\fkut)$} \label{PermutationCharacterStructure}

Up to this point, we have worked almost exclusively with the supercharacter basis.  However, we have two other canonical bases at our disposal.  This section computes the structure constants for the permutation character basis (\ref{PermutationCharacterBasis}). In terms of our matrix representation of class functions (\ref{MatrixFunctions}), $\bar{\chi}^w$ is even  simpler than the supercharacters.  For example,
$$
\bar{\chi}^{5317426}=\bar{\chi}^{\iota^{-1}(2,4,1,2,0,1,0)}=
\left[\begin{smallmatrix}
\cdot & \One & \One & \reg & \reg & \reg & \reg\\
 & \cdot & \One & \One & \One & \One &  \reg \\
 & & \cdot & \One & \reg & \reg& \reg \\
&  & & \cdot & \One & \One & \reg \\
&  & & & \cdot & \reg & \reg  \\
&  & & & & \cdot & \One \\
&  & & & & & \cdot  
\end{smallmatrix}\right].
$$
Note the inversion table gives the lengths of the $\One$'s sequence in each row and the $\vee$-version of the inversion table (\ref{DualInversionTable}) gives the number of $\reg$'s in each row.  In other words,
$$|\fkut_w|=q^{|\iota(w)|}\qquad \text{and}\qquad \frac{|\fkut_n|}{|\fkut_w|}=q^{|\iota^\vee(w)|}.$$ 

\subsection{Coproduct}

The coproduct on this basis behaves as a weak standardized deconcatenation.    We begin with the analog of Lemma \ref{SupportLemma} in this context.  

\begin{lemma}\label{SubgroupSupportLemma}
Let $A\subseteq \{1,2,\ldots, m+ n\}$ with complement $\Ac$, $w\in S_{m+n}$.   If $\Dela_{\fkut_A^\vee\times \fkut_{A}}^{\fkut_{m+n}}(\bar\chi^w)\neq 0$, then $\{(j,j+\iota_j(w))\mid j\in \Ac, \iota_j(w)\neq 0\}\subseteq U_A^\vee.$
\end{lemma}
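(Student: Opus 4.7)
The plan is to reduce this statement to Lemma \ref{SupportLemma}(a) by expanding $\bar\chi^w$ in the supercharacter basis. Recall from the remark following the definition of permutation characters that
$$\bar\chi^w = \sum_{\fkut_v \supseteq \fkut_w} \chi^v = \sum_{v \,:\, \iota(v) \geq \iota(w)} \chi^v.$$
Since $\Dela_{\fkut_A^\vee \times \fkut_A}^{\fkut_n}$ is linear, $\Dela(\bar\chi^w) \neq 0$ implies at least one summand survives; but we want a stronger conclusion: the desired support condition should hold for \emph{every} summand simultaneously, translated back to $w$.

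I would argue by contrapositive. Suppose there exists $j \notin A$ with $\iota_j(w) \neq 0$ and $(j, j + \iota_j(w)) \notin U_A^\vee$. Since $j \notin A$ and $j + \iota_j(w) > j$, the only remaining block of the partition of $\{(i,k) : i < k\}$ containing this coordinate in row $j$ is $R_A$. Thus $j + \iota_j(w) > n - \#\{a \in A \mid a > j\}$. For any $v$ with $\iota(v) \geq \iota(w)$, one has $\iota_j(v) \geq \iota_j(w) > 0$, so $j + \iota_j(v) \geq j + \iota_j(w) > n - \#\{a \in A \mid a > j\}$, and hence $(j, j + \iota_j(v)) \in R_A \not\subseteq U_A^\vee$. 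Now Lemma \ref{SupportLemma}(a) forces $\Dela(\chi^v) = 0$ for every such $v$, and summing gives $\Dela(\bar\chi^w) = 0$, the desired contradiction.

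No substantive obstacle arises; the whole argument is a straightforward pass from $\chi^v$ to $\bar\chi^w$ through the normal-subgroup lattice decomposition. The one subtlety worth highlighting is why the companion condition from Lemma \ref{SupportLemma}(a), namely $\{(a, a+\iota_a(w)+1) \mid a \in A\} \subseteq U_A$, is absent from the present statement: if $(a, a + \iota_a(w)+1) \in L_A$ for some $a \in A$, it is entirely possible that some $v$ in the decomposition has $\iota_a(v) > \iota_a(w)$ large enough that $(a, a + \iota_a(v)+1) \in U_A$, so Lemma \ref{SupportLemma}(a) no longer kills $\Dela(\chi^v)$ and the contrapositive argument fails on that side.
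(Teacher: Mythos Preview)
Your proof is correct and follows essentially the same approach as the paper: both argue by contrapositive, expand $\bar\chi^w$ as $\sum_{\iota(v)\geq\iota(w)}\chi^v$, observe that if $(j,j+\iota_j(w))\in R_A$ then $(j,j+\iota_j(v))\in R_A$ for every $v\geq w$, and then invoke Lemma~\ref{SupportLemma}(a) termwise. Your additional remark explaining why the $a\in A$ condition drops out is a nice clarification not present in the paper's proof.
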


\begin{proof}
Suppose  that there exists $j\in \Ac$ with $\iota_j(w)\neq 0$ and $(j,j+\iota_j(w))\notin U_A^\vee$. 
 If $\fkut_v\geq \fkut_w$, then $(j,j+\iota_j(v))\in R_A$, and thus by part (a) of Lemma \ref{SupportLemma}, we have $\Dela_{\fkut_A^\vee\times \fkut_{A}}^{\fkut_{m+n}}(\chi^v)=0$. Therefore,
$$ \Dela_{\fkut_A^\vee\times \fkut_{A}}^{\fkut_{m+n}}(\bar\chi^w)= \Dela_{\fkut_A^\vee\times \fkut_{A}}^{\fkut_{m+n}}\left(\sum_{\fkut_w\subseteq \fkut_v}\chi^v\right)=\sum_{\fkut_w\subseteq \fkut_v}\Dela_{\fkut_A^\vee\times \fkut_{A}}^{\fkut_{m+n}}\left(\chi^v\right)=0,$$
as desired.
\end{proof}

Let $\mathrm{id}$ be the identity element of $S_n$, recall the notation $\iota^\vee(w)$ from (\ref{DualInversionTable}), and  for $A=\{a_1,\ldots,a_\ell\}\subseteq \{1,2,\ldots, n\}$, let 
$$\iota(w)_A=(\iota_{a_1}(w),\ldots,\iota_{a_\ell}(w)) \quad\text{where $a_1<a_2<\cdots<a_\ell$}.$$

\begin{theorem} \label{PermutationCharacterCoproduct} For $w\in S_{m+n}$,
$$\Delta(\bar{\chi}^w)=\sum_{A\sqcup \Ac= \{1,2,\ldots, m+n\}\atop\iota(w)_{\Ac}\text{ an inversion table}}
\bar{\chi}^{\iota^{-1}(\iota(w)_{\Ac})}\otimes \bar\chi^{(\iota^\vee)^{-1}(\iota^\vee(w)_A\wedge\iota^\vee(\mathrm{id}_A))},$$
where $\alpha\wedge \beta=\min(\alpha,\beta)$ denotes the meet of $\alpha$ and $\beta$ in (\ref{PermutationSequenceOrder}).
\end{theorem}
\begin{remark}
While the meet $\iota^\vee(w)_A\wedge\iota^\vee(\mathrm{id}_A)$ is in the lattice of all non-negative integer sequences, any such sequence $\alpha$ less than $\iota^\vee(\mathrm{id}_A)$ satisfies $\alpha=\iota^\vee(v)$ for some permutation $v\in S_{m+n}$.
\end{remark}

\begin{proof}  Let $(h,g)\in \Cl_{\beta}\times \Cl_{\alpha}$, where $\alpha$ and $\beta$ are inversion tables.
We have that 
$$\Dela_{\fkut^\vee_A\times \fkut_A}^{\fkut_{m+n}}(\bar{\chi}^w)(h,g)=\sum_{l\in \fkl_A,r\in\fkr_A}\frac{1}{|\fkl_A||\fkr_A|}\frac{\chi_{\fkr_A}^\faith(r)}{ \chi_{\fkr_A}^\faith(r)}\bar\chi^w(l+h+g+r).$$
However, by Lemma \ref{SubgroupSupportLemma}, $\fkr_A\cap \fkut_w=\{0\}$, so every term in the sum is zero for nontrivial $r\in \fkr_A$.  In addition, $\bar\chi^w(l+h+g)\neq 0$ if and only if  $l\in \fkut_w$, so
$$\beta_{\#\{i\in \Ac\mid i\leq  j\}}\leq \iota_j(w), \quad\text{for all $j\in \Ac$,}\quad \text{and} \quad \alpha^\vee_{\#\{i\in A\mid i\leq a\}}>\iota^\vee_a(w)\quad \text{for all $a\in A$.}$$

If $\alpha$ and $\beta$ satisfy these conditions, then
\begin{equation}\label{FirstVersion}
\Dela_{\fkut^\vee_A\times \fkut_A}^{\fkut_{m+n}}(\bar{\chi}^w)(h,g)=\frac{|\fkl_A\cap \fkut_w|}{|\fkl_A||\fkr_A|}\frac{|\fkut_n|}{|\fkut_w|}=\frac{|\fkl_A\cap \fkut_w|}{|\fkl_A||\fkr_A|}q^{|\iota^\vee(w)|}.
\end{equation}
On the other hand, $\bar{\chi}^{\iota(w)_{\Ac}}\otimes \bar\chi^{\iota^\vee(w)_A\wedge\iota^\vee(\mathrm{id}_A)}(h,g)\neq 0$ exactly when for all $j\in \Ac$, 
$$\beta_{\#\{i\in \Ac\mid i\leq  j\}}\leq \iota_j(w)
\quad \text{and for all $a\in A$,}\quad
\alpha^\vee_{\#\{i\in A \mid i\leq a\}}>\iota^\vee_a(w).$$
In this case,
\begin{equation}\label{QuotientProduct}
\bar{\chi}^{\iota(w)_{\Ac}}\otimes \bar\chi^{\iota^\vee(w)_A\wedge\iota^\vee(\mathrm{id}_A)}(h,g)=\frac{|\fkut_A^\vee|}{|\fkut_{\iota^{-1}(\iota(w)_{\Ac})}|} \frac{|\fkut_A|}{|\fkut_{(\iota^\vee)^{-1}(\iota^\vee(w)_A\wedge\iota^\vee(\mathrm{id}_A))}|}.
\end{equation}
Note that 
\begin{equation}\label{FirstQuotient}
\frac{|\fkut_A^\vee|}{|\fkut_{\iota^{-1}(\iota(w)_{\Ac})}|}=|\fkut_A^\vee|\prod_{j\in \Ac}\frac{1}{q^{\iota_j(w)}}=\frac{1}{|\fkr_A|}\prod_{j\in \Ac}\frac{q^{m+n-j}}{q^{\iota_j(w)}}=\frac{1}{|\fkr_A|}\prod_{j\in \Ac}q^{\iota_j^\vee(w)},
\end{equation}
where the second equality comes from the observation that row $j\in \Ac$ consists of $m+n-j$ entries from both $U_A^\vee$ and $R_A$.  On the other hand,
\begin{align}
\frac{|\fkut_A|}{|\fkut_{(\iota^\vee)^{-1}(\iota^\vee(w)_A\wedge\iota^\vee(\mathrm{id}_A))}|}&=\prod_{a\in A\atop \iota_a^\vee(w)\leq \#\{b\in A\mid b>a\}} \hspace{-.25cm}q^{\iota_a^\vee(w)}\hspace{-.25cm}\prod_{a\in A\atop \iota_a^\vee(w)>\#\{b\in A\mid b>a\}}\hspace{-.25cm}q^{\#\{ b\in A\mid b>a\}} \notag\\
&=\prod_{a\in A\atop \iota_a^\vee(w)\leq \#\{b\in A\mid b>a\}} \hspace{-.5cm}q^{\iota_a^\vee(w)}\hspace{-.5cm}\prod_{a\in A\atop \iota_a^\vee(w)>\#\{b\in A\mid b>a\}}\hspace{-.5cm}\frac{q^{\iota_a^\vee(w)}}{q^{\#\{(a,j)\in L_A\mid j-a>\iota_a(w)\}}}\notag\\
&=\frac{|\fkl_A\cap \fkut_w|}{|\fkl_A|}\prod_{a\in A} q^{\iota_a^\vee(w)},\label{SecondQuotient}
\end{align}
where for the last equality we pull out the denominator.  Plug (\ref{FirstQuotient}) and (\ref{SecondQuotient}) into (\ref{QuotientProduct}) to get equality with (\ref{FirstVersion}).
\end{proof}

\subsection{Product}

To compute the product on the permutation characters, we use the transition matrix to the supercharacter basis.  For this transition matrix we make use of the poset on permutations obtained by identifying permutation with their inversion tables (\ref{PermutationSequenceOrder}), and we denote the M\"obius function in this poset by $\mu$.    For $v\in S_m$ and $w\in S_n$,
\begin{align*}
\bar\chi^v\cdot \bar\chi^w & = \Big(\sum_{x\geq v} \chi^y\Big)\Big(\sum_{x'\geq w} \chi^{x'}\Big)\\
&=\sum_{x\geq v\atop x'\geq w} \sum_{y\in x\shuffle x'} \chi^y\\
&=\sum_{x\geq v\atop x'\geq w} \sum_{y\in x\shuffle x'} \sum_{z\geq y} \mu(y,z)\bar\chi^z\\
&=\sum_{z\in S_{m+n}} \bigg(\sum_{{y\leq z\atop y_{\{1,2,\ldots,m\}}\geq v}\atop  y_{\{m+1,\ldots,m+n\}}\geq w} \mu(y,z) \bigg)\bar\chi^z.
\end{align*}
where for $w\in S_{m+n}$, $A\subseteq \{1,2,\ldots, m+n\}$, $w_A\in S_{|A|}$ is the ``deshuffled" permutation given by
$$w_A^{-1}\Big(\#\{i\in A\mid i\leq j\}\Big)=w^{-1}(j)-\#\{i<w^{-1}(j)\mid w(i)\notin A\}\quad\text{for $j\in A$}.$$ 
For example,
$$(193458672)_{\{1,2,5,8\}}=\hspace{-.5cm}
\begin{tikzpicture}[baseline=-.1cm]
\foreach \x/\y in {1/1,2/9,3/3,4/4,5/5,6/8,7/6,8/7,9/2}
	\node[gray] (\x) at (\x/4,1) {$\y$};
\foreach \x/\y in {4/1,5/3,6/4,7/2}
	\node (\x0) at (\x/4,0) {$\y$};
\foreach \s/\t in {1/40,5/50,6/60,9/70}
	\draw[->] (\s) -- (\t);
\end{tikzpicture}\ .
$$

The goal of this section is to show 
$$\sum_{{y\leq z\atop y_{\{1,2,\ldots,m\}}\geq v}\atop  y_{\{m+1,\ldots,m+n\}}\geq w} \mu(y,z)\in \{-1,0,1\}.$$

By Proposition \ref{CoveringInversions}, a \emph{covering inversion} $(w(i),w(j))$ in $w$ is a pair with $i$  maximal such that $i< j$ and $w(i)>w(j)$.   Alternatively,  if $w'$ is the permutation obtained by switching $w(i)$ and $w(j)$, then
$$\iota_k(w')=\left\{\begin{array}{ll} 
\iota_k(w)  - 1 & \text{if $k=w(j)$,}\\
\iota_k(w) & \text{otherwise.}
\end{array}\right.$$
Thus, every $1\leq w(j)\leq n$ can be the second coordinate in at most one covering inversion (determined by whether $\iota_{w(j)}(w)>0$ or not).   We will refer to $w'$ as the permutation obtained by \emph{removing the covering inversion $(w(i),w(j))$ from $w$}.  Let
$$\coverings(w)=\{(w(i),w(j))\text{ a covering inversion of $w$}\}.$$
The following lemma is not used explicitly below, but underlies much of the intuition in what follows.  Specifically, it says that given any set of covering inversions, one may remove them in a specific order so that with each removal  the remaining covering inversions do not change.

\begin{lemma} \label{CoveringLemma} Let $w\in S_n$ and $I\subseteq \coverings(w)$.  Let $(w(i),w(j))\in I$ be selected with $w(j)$ minimal.  If $w'$ is the permutation with $(w(i),w(j))$ removed, then $I-\{(w(i),w(j))\}\subseteq \coverings(w')$.
\end{lemma}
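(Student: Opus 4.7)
The plan is to work directly from the characterization (Proposition \ref{CoveringInversions}): $(a,b) \in \coverings(w)$ iff $a > b$, $w^{-1}(a) < w^{-1}(b)$, and $w(k) < b$ for every position $k$ with $w^{-1}(a) < k < w^{-1}(b)$. Writing $(A,B) = (w(i),w(j))$, we have $A > B$, $i < j$, and $w(k) < B$ for $i < k < j$; $w'$ is obtained by swapping the values at positions $i$ and $j$ and leaves all other positions fixed. Since each value can be the second coordinate of at most one covering inversion, any other $(a,b) \in I$ satisfies $b \neq B$, hence $b > B$ by the minimality of $B$; in particular $a \neq B$ as well, else $A > b > a = B$ would contradict $A > B$.

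I would then verify $(a,b) \in \coverings(w')$ by cases on the interaction between $\{a,b\}$ and $\{A,B\}$. If $a = A$, the covering condition on $(A,B)$ forces $j < w^{-1}(b)$ -- otherwise $w^{-1}(b) \in (i,j)$ would give $b < B$ -- so $w'^{-1}(A) = j < w^{-1}(b) = w'^{-1}(b)$, and for $j < k < w^{-1}(b)$ we have $w'(k) = w(k) < b$ by the $(A,b)$-covering condition in $w$. If $b = A$, so $w^{-1}(a) < i$ with $w(k) < A$ for $w^{-1}(a) < k < i$, then in $w'$ the value $A$ sits at position $j$, and one checks $w'(k) < A$ on the expanded interval $(w^{-1}(a), j)$ by piecing together the original bound on $(w^{-1}(a), i)$, the identity $w'(i) = B < A$, and the $(A,B)$-bound $w(k) < B < A$ on $(i,j)$. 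Finally, if $\{a,b\} \cap \{A,B\} = \emptyset$, let $p = w^{-1}(a)$, $q = w^{-1}(b)$; the crucial step is to show that $\{i,j\}$ lies entirely inside or entirely outside the interval $(p,q)$, since exactly one of $i,j$ inside $(p,q)$ would combine with the $(A,B)$- and $(a,b)$-covering conditions to yield $a < B$ or $b < B$, contradicting $a > b > B$. Both surviving configurations preserve the property that all values on $(p,q)$ are $< b$: outside, nothing changes; inside, only the values $A,B$ are permuted among positions that already satisfied the bound.

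The main obstacle is the bookkeeping across these subcases, particularly in the case $a = A$ where the position of $A$ migrates from $i$ to $j$ and one must enlarge the interval on which the bounding condition has to hold. The inequality $b > B$ -- a direct consequence of the minimality assumption together with the uniqueness of the first coordinate for a given second coordinate -- is the single observation that makes every case go through, since it converts the restrictive $(A,B)$-covering condition into useful information about where the values $a$ and $b$ can live relative to $i$ and $j$.
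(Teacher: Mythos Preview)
Your argument is correct and follows essentially the same strategy as the paper's proof, which is also a direct case analysis on where the other covering inversion sits relative to the positions $i$ and $j$; the paper's three-sentence version organizes the cases by the position $l$ of the second coordinate (claiming $l<i$ or $l>j$) rather than by value overlaps, and in fact arguably glosses over your case $b=A$ (i.e.\ $l=i$), which you handle explicitly. One small slip in your write-up: the justification for $a\neq B$ is garbled---the clean argument is simply $a>b>B$, hence $a\neq B$.
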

\begin{proof}
Let $(w(k),w(l))\in I-\{(w(i),w(j))\}$ where by minimality of $w(j)$, we have $w(j)<w(l)$.  Then either $l<i$ or $l>j$.  In either case, if we remove $(w(i),w(j))$ from $w$, then the resulting permutation $w'$ will still have $(w(k),w(l))$ as a covering inversion. 
\end{proof}
By iteratively applying Lemma \ref{CoveringLemma} it makes sense to define for $C\subseteq \coverings(w)$, the permutation
$$w^{\remove(C)}=\text{ the permutation obtained by iteratively removing the covering inversions in $C$ from $w$,} $$
so that $\coverings\Big(w^{\remove{C}}\Big)=\coverings(w)-C$.
\begin{remark}
Note that since
$$\{z^{\remove(C)}\mid C\subseteq \coverings(z)\}$$
 is a Boolean lattice, we have that for $z\in S_n$ and $y\leq z$,
$$\mu(y,z)=\left\{\begin{array}{ll} (-1)^{|C|} & \text{if $y=z^{\remove(C)}$ for some $C\subseteq \coverings(z)$},\\ 0 & \text{otherwise.}\end{array}\right.$$
\end{remark}

If $v\in S_m$ and $w\in S_n$, let 
\begin{align*}
\deshuffle_{v,w}&=\{y\in S_{m+n}\mid y_{\{1,2,\ldots,m\}}\geq v,y_{\{m+1,\ldots, m+n\}}\geq w\}\\
\coveringsets^z_{v,w}&=\{C\subseteq \coverings(z)\mid z^{\remove(C)}\in \deshuffle_{v,w}\}.
\end{align*}
Our goal is to compute
\begin{equation} \label{ProductStrategy} \Coeff(\bar\chi^v\bar\chi^w;\bar\chi^z)=\sum_{C\in \coveringsets^z_{v,w}} (-1)^{|C|}.
\end{equation}
We will do this by iteratively partitioning $\coveringsets^z_{v,w}$ into blocks that contain an internal sign reversing bijection until we are left with at most 1 element.

\subsubsection{Free covering inversions}

A covering inversion $c\in \coverings(z)$ is called \emph{free} with respect to the pair $(v,w)$ if for all $C\subseteq \coverings(z)-\{c\}$,
$$C\in \coveringsets^z_{v,w}\qquad \text{if and only if} \qquad C\cup\{c\}\in \coveringsets^z_{v,w}.$$
To help get an intuition for free covering inversions we analyze the different types of covering inversions that occur.  For example, let
$$z=971458326\qquad \text{with}\qquad \coverings(z)=\{(9,7),(9,8),(7,1),(7,4),(7,5),(8,3),(8,6),(3,2)\},$$   
and consider the deshuffle
\begin{align*}
z_{\{1,2,3,4,5\}}&=14532\quad\text{with}\quad \iota(z_{\{1,2,3,4,5\}})=(0,3,2,0,0),\\
z_{\{6,7,8,9\}}&=4231\quad\text{with}\quad \iota(z_{\{6,7,8,9\}})=(3,1,1,0).
\end{align*}
Then $(3,2)$, $(9,7)$, and $(7,5)$ have fundamentally different behaviors in terms of $z_{\{1,2,3,4,5\}}$ and $z_{\{6,7,8,9\}}$, as outlined below.

\begin{description}
\item[Case 1.] If a covering inversion $(z(i),z(j))$ satisfies $z(i)\leq m$, then there is a corresponding covering inversion $(z(i),z(j))\in \coverings(z_{\{1,2,\ldots, m\}})$ and 
\begin{equation}\label{SmallIndicesCover}
\Big(z^{\remove(\{(z(i),z(j))\})}\Big)_{\{1,2\ldots, m\}}=(z_{\{1,2,\ldots,m\}})^{\remove(\{(z(i),z(j))\})}.
\end{equation}
In terms of inversion tables,
$$\iota_{z(j')}\Big((z^{\remove(\{(z(i),z(j))\})})_{\{1,2,\ldots,m\}}\Big)=\left\{\begin{array}{ll}
\iota_{z(j)}\big(z_{\{1,2,\ldots,m\}}\big)-1 & \text{if $j'=j$},\\
\iota_{z(j)}\big(z_{\{1,2,\ldots,m\}}\big) & \text{otherwise.}
\end{array}\right.$$
In this case, the removal of $(z(i),z(j))$ has no effect on $z_{\{m+1,\ldots, m+n\}}$.
\item[Case 2.] If a covering inversion $(z(i),z(j))$ satisfies $z(j)>m$, then there is a corresponding covering inversion  $(z(i)-m,z(j)-m)\in \coverings(z_{\{m+1,\ldots, m+n\}})$, and
\begin{equation}\label{LargeIndicesCover}
\Big(z^{\remove(\{(z(i),z(j))\})}\Big)_{\{m+1,\ldots, m+n\}}=(z_{\{m+1,\ldots,m+n\}})^{\remove(\{(z(i)-m,z(j)-m)\})}.
\end{equation}
In terms of inversion tables,
$$\iota_{z(j')}\Big((z^{\remove(\{(z(i),z(j))\})})_{\{m+1,\ldots,m+n\}}\Big)=\left\{\begin{array}{ll}
\iota_{z(j)}\big(z_{\{m+1,\ldots,m+n\}}\big)-1 & \text{if $j'=j$},\\
\iota_{z(j)}\big(z_{\{m+1,\ldots,m+n\}}\big) & \text{otherwise.}
\end{array}\right.$$
In this case, the removal of $(z(i),z(j))$ has not effect on $z_{\{1,2,\ldots, m\}}$.
\item[Case 3.]  If a covering inversion $(z(i),z(k))$ satisfies $z(i)>m$ and $z(k)\leq m$, then stranger things can happen.  In this case,
$$\iota_{z(j)}\Big((z^{\remove(\{(z(i),z(k))\})})_{\{1,2,\ldots,m\}}\Big)=\left\{\begin{array}{ll}
\iota_{z(j)}\big(z_{\{1,2,\ldots,m\}}\big)+1 & \text{if $i<j<k$},\\
\iota_{z(j)}\big(z_{\{1,2,\ldots,m\}}\big) & \text{otherwise.}
\end{array}\right.$$
So in this case, removing the covering inversion can increase the size of the deshuffled permutation, and the removal of $(z(i),z(j))$ has no effect on $z_{\{m+1,\ldots, m+n\}}$.
\end{description}

\begin{lemma}\label{FreeLemma}
Let $v\in S_m$, $w\in S_n$, and $z\in S_{m+n}$.  Then
\begin{enumerate}
\item[(a)] If $z$ has a free covering inversion with respect to $(v,w)$ then
$$\sum_{C\in \coveringsets^z_{v,w}} (-1)^{|C|}=0.$$
\item[(b)] Suppose $\coveringsets_{v,w}^z\neq \emptyset$.  If  $z$ has no free covering inversions with respect to $(v,w)$, then $z_{\{1,2\ldots,m\}}\ngtr v$ and $z_{\{m+1,\ldots,m+n\}}= w$.
\end{enumerate}
\end{lemma}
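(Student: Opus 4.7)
The strategy is to prove (a) by a sign-reversing involution driven by a free covering inversion, and (b) by the contrapositive, producing a free covering inversion whenever either claimed conclusion fails. For (a), if $c\in\coverings(z)$ is free, the symmetric-difference map $\phi(C)=C\triangle\{c\}$ is a well-defined (by freeness) fixed-point-free involution on $\coveringsets^z_{v,w}$ that toggles $(-1)^{|C|}$; the signed sum in (\ref{ProductStrategy}) therefore collapses in pairs to zero.

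For (b), assume $\coveringsets^z_{v,w}\neq\emptyset$. I first show $z_{>m}=w$. Removing any covering inversion either leaves $\iota(z_{>m})$ fixed (Cases~1,~3) or decreases exactly one coordinate by~$1$ (Case~2), so nonemptiness forces $z_{>m}\geq w$. If $z_{>m}>w$ strictly, pick $k$ with $\iota_k(z_{>m})>\iota_k(w)$; since Case~2 covering inversions are in bijection with the coordinates of $\iota(z_{>m})$ they act on, there is a unique covering inversion $c$ of $z$ touching coordinate~$k$. Hence for every $C\subseteq\coverings(z)-\{c\}$, $\iota_k(z^{\remove(C)}_{>m})=\iota_k(z_{>m})\geq\iota_k(w)+1$, and toggling $c$ merely swings this coordinate between $\iota_k(w)+1$ and $\iota_k(w)$, leaving $z^{\remove(C)}_{\leq m}$ and all other coordinates of $z^{\remove(C)}_{>m}$ untouched. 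Both values preserve the deshuffle condition, so $c$ is free, contradicting the hypothesis.

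Next I show $z_{\leq m}\ngtr v$. With $z_{>m}=w$ established, no $C\in\coveringsets^z_{v,w}$ may contain a Case~2 inversion. Suppose $z_{\leq m}>v$ strictly and pick $k$ with $\iota_k(z_{\leq m})>\iota_k(v)$. Since $\iota_k(z_{\leq m})\geq 1$, lifting the standard covering-inversion analysis from $z_{\leq m}$ back to $z$ produces a Case~1 covering inversion $c$ whose removal decreases coordinate~$k$ of $\iota(z_{\leq m})$ by~$1$, and $c$ is the unique Case~1 covering inversion with this effect. The only remaining covering inversions that can touch this coordinate come from Case~3, and these only raise (never lower) coordinates of $\iota(z_{\leq m})$ upon removal. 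Thus for every admissible $C\subseteq\coverings(z)-\{c\}$, $\iota_k(z^{\remove(C)}_{\leq m})\geq\iota_k(z_{\leq m})\geq\iota_k(v)+1$, so toggling $c$ preserves the deshuffle condition; freeness of $c$ follows, again a contradiction.

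The main obstacle is the monotonicity claim for Case~3 removals in the last step. Removing a Case~3 covering inversion reshuffles the standardization underlying $z_{\leq m}$ and can alter several coordinates of $\iota(z_{\leq m})$ simultaneously, so verifying that no coordinate is ever decreased---individually or under composition of multiple Case~3 removals---requires careful tracking of how the rank shifts induced in the standardization propagate into the inversion table of $z_{\leq m}$. Once this monotonicity is pinned down, the overall free-inversion involution strategy closes cleanly.
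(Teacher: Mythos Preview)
Your argument for (a) and for the $z_{>m}=w$ half of (b) is correct and matches the paper's: the sign-reversing involution $C\mapsto C\triangle\{c\}$ handles (a), and for (b) the fact that Case~2 removals act independently on one coordinate of $\iota(z_{>m})$ while leaving $z_{\leq m}$ untouched makes the excess Case~2 inversion free.

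The gap in your $z_{\leq m}\ngtr v$ argument is not where you think it is. You assert that ``lifting the standard covering-inversion analysis from $z_{\leq m}$ back to $z$ produces a Case~1 covering inversion $c$.'' This is false. A covering inversion $(a,k)$ of $z_{\leq m}$ need not be a covering inversion of $z$ at all: in $z$ there may be a value $>m$ sitting strictly between the positions of $a$ and $k$, blocking the lift. Concretely, take $m=2$, $z=3241$, $v=12$. Then $z_{\leq 2}=21>v$, but the only covering inversions of $z$ are $(3,2)$ and $(4,1)$, both Case~3; there is no Case~1 inversion in $\coverings(z)$ whatsoever. (One can check directly that $(4,1)$ is nonetheless free here, so the lemma survives, but not via your route.)

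By contrast, the monotonicity issue you flag as the ``main obstacle'' is not actually problematic: a single Case~3 removal never decreases any coordinate of $\iota(\,\cdot_{\leq m})$ (it fixes $\iota_k$ and increments the coordinates for the intermediate values), and iterating via the removal order of Lemma~\ref{CoveringLemma} preserves this, since the Case type of each inversion is determined by values, not positions. So your concern there is resolvable; the genuine hole is the existence step before it. The paper's proof is equally terse at this point and makes essentially the same unjustified existence claim, so a complete argument would need to handle the situation where the covering inversion $(?,k)$ of $z$ with right value $k$ is Case~3 and show it is free directly.
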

\begin{proof}
(a) If $\coveringsets^z_{v,w}=\emptyset$, then the sum is trivially 0.  Else, $C\in \coveringsets^z_{v,w}$ and we have a bijection between
$$\phi:\{C\in  \coveringsets^z_{v,w}\mid c\in C\} \longrightarrow\{D\in  \coveringsets^z_{v,w}\mid c\notin D\},$$
 such that $(-1)^{|\phi(C)|}=-(-1)^{|C|}$.  It follows that the sum is zero.
 
 (b) By (\ref{SmallIndicesCover}) and (\ref{LargeIndicesCover}) any covering inversion that is in those two cases respects the ordering on the deshuffled permutation.   Thus, if $z_{\{1,2\ldots,m\}}> v$ or $z_{\{m+1,\ldots,m+n\}}> w$, then there exists $(z(i),z(j))\in \coverings(z)$ with either $z(i)\leq m$ or $z(j)>m$ such that $\iota_{z(j)}(z)>\iota_{z(j)}(v.w)$, where
 $$v.w(k)=\left\{\begin{array}{ll} v(k) & \text{if $1\leq k\leq m$,}\\ w(k-m)+m & \text{if $m<k\leq m+n$}\end{array}\right.$$
is the shifted concatenation of $v$ with $w$; it follows that $(z(i),z(j))$ is free.  If $z_{\{m+1,\ldots,m+n\}}\ngeq w$, then there is no set of covering inversions $C$ such that $z^{\remove(C)}_{\{m+1,\ldots, m+n\}}\geq w$.   Thus, $z_{\{m+1,\ldots,m+n\}}=w$.
\end{proof}
\begin{corollary}
Suppose $\coveringsets^z_{v,w}$ has no free covering inversions and $C\in\coveringsets^z_{v,w}$.  Then $(z(i),z(j))\in C$ implies $z(j)\leq m$.
\end{corollary}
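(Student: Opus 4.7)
The plan is to derive the corollary by combining Lemma \ref{FreeLemma}(b) with the three-case analysis of covering inversions established just before that lemma. Since $C\in\coveringsets^z_{v,w}$ the set $\coveringsets^z_{v,w}$ is nonempty, and by hypothesis $z$ has no free covering inversions, so Lemma \ref{FreeLemma}(b) gives the crucial identity $z_{\{m+1,\ldots,m+n\}}=w$. The goal then reduces to showing that removing any covering inversion of Case~2 type from $z$ strictly decreases its restriction to $\{m+1,\ldots,m+n\}$ in the inversion-table order.

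Next, I would argue by contradiction: suppose $(z(i),z(j))\in C$ with $z(j)>m$. Because $(z(i),z(j))$ is an inversion we automatically have $z(i)>z(j)>m$, placing it in Case~2. By the formula (\ref{LargeIndicesCover}) recorded in the case analysis, removing just this covering inversion reduces $\iota_{z(j)-m}$ in the inversion table of $z_{\{m+1,\ldots,m+n\}}$ by exactly $1$, while leaving all other entries of that inversion table untouched.

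The remaining point is to verify that no other covering inversion in $C$ can compensate by increasing some entry in the inversion table of the second-half restriction. This is precisely what the three-case analysis shows: Case~1 inversions ($z(i)\leq m$) affect only the first-half restriction; Case~3 inversions ($z(i)>m$, $z(j)\leq m$) likewise leave the second-half restriction untouched (they only affect $\iota_{z(j)}$ of $z_{\{1,\ldots,m\}}$); and Case~2 inversions, as just noted, can only decrease entries of the second-half inversion table. Iterating via Lemma \ref{CoveringLemma} to remove the inversions of $C$ one at a time, we conclude
\[
z^{\remove(C)}_{\{m+1,\ldots,m+n\}} \;<\; z_{\{m+1,\ldots,m+n\}} \;=\; w
\]
in the inversion-table order, contradicting $z^{\remove(C)}\in\deshuffle_{v,w}$.

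The main obstacle is purely bookkeeping, namely checking that the effect of $\remove(C)$ on the second-half restriction is the composition of the single-inversion effects recorded in cases 1--3, with no interactions; this is guaranteed by the controlled removal order provided by Lemma \ref{CoveringLemma}. Given that, the corollary follows immediately from the observation that the second-half restriction is preserved by Cases~1 and~3 and only decreased by Case~2.
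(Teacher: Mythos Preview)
Your argument is correct and is essentially the approach the paper has in mind: the corollary is stated without proof because it follows directly from Lemma~\ref{FreeLemma}(b) together with the three-case analysis, exactly as you spell out. The key point is already implicit in the last sentence of the proof of Lemma~\ref{FreeLemma}(b), where it is observed that no set of covering-inversion removals can increase the second-half restriction; combined with $z_{\{m+1,\ldots,m+n\}}=w$, the presence of any Case~2 inversion in $C$ forces $z^{\remove(C)}_{\{m+1,\ldots,m+n\}}<w$.
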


For a set of covering inversions $C\subseteq \coverings(z)$, we can represent each covering inversion on a row of nodes by denoting $(z(i),z(j))$ with an arc connecting the $i$th node to the $j$th node. For example,
$$z=971458326\qquad \text{with}\qquad \coverings(z)=\{(9,7),(9,8),(7,1),(7,4),(7,5),(8,3),(8,6),(3,2)\},$$
would be given by
$$\begin{tikzpicture}[scale=.5]
\foreach \x/\y in {1/9,2/7,3/1,4/4,5/5,6/8,7/3,8/2,9/6}
	{\node (\y) at (\x,0) [inner sep=-1pt] {$\bullet$};
	\node at (\x,-.3) {$\scscs\y$};}
\foreach \s/\t in {9/7,9/8,7/1,7/4,7/5,8/3,8/6,3/2}
	\draw (\s) to  [in=120, out=60] (\t);
\end{tikzpicture}\ .$$
Note that by the definition of a covering inversion we cannot have $(w(i),w(k)),(w(j),w(l))\in C$ with $i<j<k<l$, since we would simultaneously have $w(k)>w(j)$ and $w(k)<w(j)$.  Thus, the arc diagram will always be crossing free.    A \emph{connected component} of $C\subseteq \coverings(z)$ is a nonempty set of arcs that form connected component of the graph.  
 
 \begin{lemma} \label{ComponentRemoval}
Let $C\subseteq \coverings(z)$ be a connected component with minimal element at position $i$.  
\begin{enumerate}
\item[(a)] If $z(i)\leq m$, then
$$\iota_{z(j)}(z^{\remove(C)}_{\{1,2,\ldots,m\}})=\left\{\begin{array}{ll}
\iota_{z(j)}\big(z_{\{1,2,\ldots,m\}}\big)-1 & \text{if $(z(i'),z(j))\in C$ for some $i'\geq i$},\\
\iota_{z(j)}\big(z_{\{1,2,\ldots,m\}}\big) & \text{otherwise.}
\end{array}\right.$$
\item[(b)] Suppose $z(i)>m$ and for all $j>i$ such that $(z(i),z(j))\in C$ suppose $z(j)\leq m$.  Let $k$ be the maximal position in $C$. Then
$$\iota_{z(j)}(z^{\remove(C)}_{\{1,2,\ldots,m\}})=\left\{\begin{array}{ll@{}}
\iota_{z(j)}\big(z_{\{1,2,\ldots,m\}}\big) & \text{if $j<i$, $j>k$ or $(z(i'),z(j))\in C$ for some $i'\geq i$},\\
\iota_{z(j)}\big(z_{\{1,2,\ldots,m\}}\big)+1 & \text{otherwise.}
\end{array}\right.$$
\end{enumerate}
\end{lemma}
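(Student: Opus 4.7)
The plan is to reduce everything to a cyclic-shift description of $z^{\remove(C)}$ at the positions of $C$, and then read off the effect on the deshuffle $y=z_{\{1,\ldots,m\}}$. I begin by recording the tree structure of $C$: since two covering inversions cannot share a right endpoint (both would have to be the unique maximal left index above that value), each non-root position in $C$ has a unique parent arc, and tracing arcs from $i$ shows $C$ is a rooted tree with root $i$ whose $z$-values strictly decrease from parent to child. Under hypothesis (a), every $z(p)$ with $p\in C$ therefore satisfies $z(p)\leq m$; under hypothesis (b), every $z(p)$ with $p\in C\setminus\{i\}$ satisfies $z(p)\leq m$ while only $z(i)>m$.

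Next I prove by induction on $|C|$ the cyclic-shift claim: if the positions of $C$ in increasing order are $p_1<p_2<\cdots<p_s$, then $z^{\remove(C)}(p_t)=z(p_{t+1})$ for $1\leq t\leq s-1$ and $z^{\remove(C)}(p_s)=z(p_1)$. For the inductive step, Lemma \ref{CoveringLemma} lets me peel off the arc $(z(p_a),z(p_u))$ whose right-endpoint value $z(p_u)$ is minimal. The key sub-claim is $u=a+1$: such a $p_u$ is a leaf (any child would have smaller value) and it is the leftmost child of $p_a$ (since siblings' $z$-values increase with position by the covering-inversion constraint), and non-crossing of arcs combined with these facts forces $(p_a,p_u)\cap C=\emptyset$. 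After the single swap $z\to z'$, Lemma \ref{CoveringLemma} gives that $C\setminus\{(z(p_a),z(p_u))\}$ is a connected set of covering inversions of $z'$ supported on $\{p_1,\ldots,p_s\}\setminus\{p_a\}$, so the inductive hypothesis delivers the shift on those $s-1$ positions and assembles with the first swap into the full cyclic shift.

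I then translate the shift into the deshuffle. In case (a), since every $C$-position lies in the deshuffle, the cyclic shift at $C$-positions of $z$ transfers to a cyclic shift at the corresponding positions of $y$; this is itself the removal of the same connected set of covering inversions \emph{inside} $y$, and iterating Lemma \ref{CoveringLemma} in $y$ decreases $\iota_{z(p_u)}(y)$ by $1$ for each non-root $p_u$ (equivalently, each $j=p_u$ with $(z(i'),z(j))\in C$ for some $i'\geq i$) while leaving other entries unchanged. In case (b), the shift removes $p_s$ from the deshuffle and inserts $p_1$; equivalently, in the deshuffle sequence each value $z(p_u)$ with $u\geq 2$ jumps from its slot just after the ``gap'' of non-$C$ values $\leq m$ in $(p_{u-1},p_u)$ to the slot just before it. Because the parent of $p_u$ in $C$ lies weakly left of $p_{u-1}$, the covering-inversion bound forces every gap value to be strictly less than $z(p_u)$, so each gap-crossing increments that gap value's $\iota$-entry by $1$ and leaves $\iota_{z(p_u)}$ itself unchanged; summing over $u$ yields the stated formula.

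The main obstacle is establishing the cyclic-shift claim, particularly the sub-claim $u=a+1$ that keeps the induction clean by ensuring the peeled leaf has no intervening $C$-positions below its parent. Once that is in hand, both (a) and (b) become routine deshuffle bookkeeping, with the covering-inversion bound $z(l)<z(p_u)$ for $l$ strictly between the parent of $p_u$ and $p_u$ doing all the work of identifying which inversion-table entries change and by how much.
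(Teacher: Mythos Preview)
Your proof is correct and follows essentially the same approach as the paper: both establish the cyclic-shift description of $z^{\remove(C)}$ on the positions $p_1<\cdots<p_s$ of $C$, and then read off the effect on the deshuffle according to whether $z(p_1)\leq m$ (so the whole cycle stays in the deshuffle) or $z(p_1)>m$ (so the cycle slides the large value to position $p_s$ and out of the deshuffle). The paper simply asserts the cyclic-shift formula and the one-line observation ``in the first case $z(i)$ does not get removed when restricting, in the second case it does,'' whereas you supply an inductive proof of the cyclic shift (via the sub-claim $u=a+1$) and carry out the deshuffle bookkeeping explicitly; your argument is a fleshed-out version of theirs rather than a different route.

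One minor remark: your justification of $u=a+1$ via ``leftmost child plus non-crossing'' is fine, but the quicker argument is that any $C$-position $p_b$ with $p_a<p_b<p_u$ would satisfy $z(p_b)<z(p_u)$ by the covering-inversion bound, contradicting minimality of $z(p_u)$.
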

\begin{proof}
These follow from the fact that if $i=j_0<j_1<\cdots<j_\ell$ are the points of $C$, then $z^{\remove(C)}$ is the permutation given by
$$z^{\remove(C)}(k)=\left\{\begin{array}{ll} z(j_{r+1}) & \text{if $k=j_r$, $r<\ell$,}\\
z(i) & \text{if $k=j_\ell$,}\\
z(k) & \text{otherwise.}\end{array}\right.$$
In the first case, $z(i)$ does not get removed when restricting to the subset, and in the second case it does.
\end{proof}

 \subsubsection{Nested covering inversions}
 
 We say a covering inversion $(z(j),z(k))$ \emph{nests} in $C\subseteq \coverings(z)$ if there exists $i\leq j<k<l$ such that $(z(i),z(l))\in C$.   Fix $v\in S_m$, $w\in S_n$ and $z\in S_{m+n}$ and assume $\coveringsets_{v,w}^z\neq \emptyset$ with no free covering inversions.  Let $C\in\coveringsets_{v,w}^z$.  We say a covering inversion $b\notin C$ is \emph{addable} if $b$ nests in $C$ and $C\cup \{b\}\in \coveringsets_{v,w}^z$.  Similarly, a covering inversion $b\in C$ is  \emph{removable} if $b$ nests in $C$ and $C-\{b\}\in \coveringsets_{v,w}^z$.  If there exists $C\in \coveringsets_{v,w}^z$ such that $b$ is addable (resp. removable) in $C$, then we say $b$ is addable (resp. removable) in $\coveringsets_{v,w}^z$.
 
Let 
\begin{equation}\label{CoreSet}
\coreset_{v,w}^z=\{C\in \coveringsets_{v,w}^z\mid C\text{ has no addable and no removable inversions}\},
\end{equation}
and
$$\core_{v,w}(z)=\left\{\begin{array}{ll} \dd\min_{C\in \coreset_{v,w}^z} \{|C|\} & \text{if $\coveringsets_{v,w}^z$ has no free covering inversions and $|\coreset_{v,w}^z|\notin 2\ZZ$,}\\
0 & \text{otherwise}.\end{array}\right.$$
The main result of this section is the following product formula.
\begin{theorem}\label{PermutationCharacterProduct}
For $v\in S_m$ and $w\in S_n$,
$$\bar\chi^v\cdot \bar\chi^w=\sum_{z\in S_{m+n} \atop \core_{v,w}(z)\neq 0} (-1)^{\core_{v,w}(z)} \bar\chi^z.$$
\end{theorem}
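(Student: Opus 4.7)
Starting from the expansion
$$\bar\chi^v \cdot \bar\chi^w = \sum_{z \in S_{m+n}} \bigg(\sum_{C \in \coveringsets_{v,w}^z} (-1)^{|C|}\bigg) \bar\chi^z$$
already derived at the top of this subsection, the plan is a sequence of sign-reversing involutions on $\coveringsets_{v,w}^z$ that whittle the inner sum down to a contribution from the rigid core $\coreset_{v,w}^z$. First, Lemma \ref{FreeLemma}(a) supplies an involution for any $z$ admitting a free covering inversion and kills its contribution, matching $\core_{v,w}(z) = 0$ by definition. I may therefore assume no free inversions exist, so by Lemma \ref{FreeLemma}(b) and its corollary $z_{\{m+1,\ldots,m+n\}} = w$, $z_{\{1,\ldots,m\}} \not> v$, and every $(z(i),z(j)) \in C \in \coveringsets_{v,w}^z$ satisfies $z(j) \leq m$; Lemma \ref{ComponentRemoval} then gives a precise per-component account of how removing $C$ shifts the inversion table of $z^{\remove(C)}_{\{1,\ldots,m\}}$.

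Second, for each $C \notin \coreset_{v,w}^z$ I would canonically select the lexicographically minimal nested covering inversion $b$ that is addable or removable in $C$, and toggle it via $C \mapsto C \triangle \{b\}$. The nesting hypothesis together with Lemma \ref{ComponentRemoval} ensures that the inversion-table effect of the arc enclosing $b$ already dominates the effect of $b$, so the toggle stays inside $\coveringsets_{v,w}^z$; the canonical choice makes the map an involution; and $|C \triangle \{b\}| = |C| \pm 1$ makes it sign-reversing. The sum therefore collapses to $\sum_{C \in \coreset_{v,w}^z} (-1)^{|C|}$.

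The principal obstacle is the final step: showing this core sum equals $(-1)^{\core_{v,w}(z)}$ when $|\coreset_{v,w}^z|$ is odd and vanishes when it is even. The main claim I would prove is that every $C \in \coreset_{v,w}^z$ has $|C|$ congruent mod $2$ to $\min_{C \in \coreset_{v,w}^z}|C|$: with no nested toggles available, any two core elements must differ by swapping one connected arc configuration for another of the same ``skeleton,'' a move that changes $|C|$ by an even amount. A final involution on $\coreset_{v,w}^z$ --- exchanging such outermost configurations at a canonically chosen ambiguous position --- then cancels everything when the cardinality is even and leaves a unique distinguished element of cardinality $\core_{v,w}(z)$ when the cardinality is odd. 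Establishing this rigidity, and matching the surviving element to the definition of $\core_{v,w}(z)$, is the technical heart of the argument.
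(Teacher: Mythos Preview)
Your reduction to $\coreset_{v,w}^z$ follows the paper's strategy in spirit, but the final step contains a genuine error. Your ``main claim'' that every $C \in \coreset_{v,w}^z$ has $|C| \equiv \min_{C'} |C'| \pmod 2$ is false, and the justification that core elements differ only by moves changing $|C|$ by an even amount is not what actually happens. The paper establishes (Lemma~\ref{RemainingSets}(b)) that for each relevant position $i$ the set $\possibilities_{v,w}^z(i)$ of possible connected components at $i$ is a \emph{chain} under containment in which consecutive elements differ in size by exactly~$1$, not~$2$. Concretely, if $|I|=1$ and the chain has three elements of sizes $1,2,3$, then $\coreset_{v,w}^z$ has elements of both parities and the alternating sum is $-1+1-1=-1$, not $\pm 3$. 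Your proposal is also internally inconsistent: if all core elements had the same $|C|$ parity, a sign-reversing involution on $\coreset_{v,w}^z$ would be impossible, and the sum $\sum_C(-1)^{|C|}=\pm|\coreset_{v,w}^z|$ could only be $\pm1$ or $0$ when $|\coreset_{v,w}^z|\le 1$.

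The correct mechanism is a \emph{factorization}, not a parity argument: $\coreset_{v,w}^z$ is in bijection with the product $\prod_{i\in I}\possibilities_{v,w}^z(i)$, so
\[
\sum_{C\in\coreset_{v,w}^z}(-1)^{|C|}=\prod_{i\in I}\Big(\sum_{C_i\in\possibilities_{v,w}^z(i)}(-1)^{|C_i|}\Big),
\]
and each factor is an alternating sum along a chain, equal to $(-1)^{|\min_i|}$ when the chain length is odd and $0$ when it is even. This simultaneously explains why the total vanishes exactly when $|\coreset_{v,w}^z|=\prod_i|\possibilities_{v,w}^z(i)|$ is even and why the surviving sign is $(-1)^{\sum_i|\min_i|}=(-1)^{\core_{v,w}(z)}$ otherwise. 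A secondary concern: your single ``lexicographically minimal toggle'' on $\coveringsets_{v,w}^z\setminus\coreset_{v,w}^z$ is not obviously an involution, since adding $b$ can create new nested arcs (under $b$) that may be lex-smaller; the paper avoids this by processing removable inversions sequentially via a removal sequence $\eta$ and checking at each stage that earlier choices are unaffected.
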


By Lemma \ref{FreeLemma} (a), we may assume for the remainder of this section that $\coveringsets_{v,w}^z$ has no free covering inversions.  The basic strategy of the proof is to construct a number of sign reversing bijections on subsets of $\coveringsets_{v,w}^z$ that slowly reduce the sum in (\ref{ProductStrategy}) into at most one term.  The first lemma sets up the underlying philosophy.
   
 \begin{lemma}  \label{BaseNesting} Assume $\coverings(z)$ has no free covering inversions.  Let $b\in \coverings(z)$ be removable in $\coveringsets_{v,w}^z$. Then the function
$$\begin{array}{ccc} \{C\in \coveringsets_{v,w}^{z}\mid b\text{ is removable in $C$}\} & \longrightarrow & \{D\in  \coveringsets_{v,w}^{z}\mid b\text{ is addable in $D$}\}\\
C & \mapsto & C-\{b\}
\end{array}$$
is a bijection.
\end{lemma}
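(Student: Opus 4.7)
The plan is to verify well-definedness of the proposed map, produce an explicit inverse $D\mapsto D\cup\{b\}$, and observe that both directions preserve the required properties by a direct unwinding of the definitions. The key observation driving everything is that whether $b$ nests in a set of covering inversions is determined entirely by the presence of an appropriate outer arc, and so it is insensitive to whether $b$ itself is in the set.

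First I would check that $C\mapsto C-\{b\}$ lands in the target. Fix $C$ in the domain, so $b\in C$, $C\in\coveringsets_{v,w}^z$, $b$ nests in $C$, and $C-\{b\}\in\coveringsets_{v,w}^z$. Writing $b=(z(j),z(k))$, the nesting hypothesis furnishes an arc $a=(z(i),z(l))\in C$ with $i\leq j<k<l$. In particular $a\neq b$ (since $l>k$), so $a\in C-\{b\}$, which shows that $b$ still nests in $C-\{b\}$. Combined with $(C-\{b\})\cup\{b\}=C\in\coveringsets_{v,w}^z$, this certifies that $b$ is addable in $C-\{b\}$.

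Next I would check that the candidate inverse $D\mapsto D\cup\{b\}$ lands in the domain by the symmetric argument. If $D$ lies in the codomain, then $b\notin D$, $D\cup\{b\}\in\coveringsets_{v,w}^z$, and $b$ nests in $D$ via some outer arc $a\neq b$ which also belongs to $D\cup\{b\}$. Hence $b$ nests in $D\cup\{b\}$, and $(D\cup\{b\})-\{b\}=D\in\coveringsets_{v,w}^z$, so $b$ is removable in $D\cup\{b\}$. The two maps are mutually inverse set-theoretically, because $(C-\{b\})\cup\{b\}=C$ whenever $b\in C$ and $(D\cup\{b\})-\{b\}=D$ whenever $b\notin D$.

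There is no real obstacle: the entire content of the lemma is the observation that adding or removing the inner arc $b$ does not disturb the outer arc witnessing its nesting, so the addability/removability conditions are simply swapped under $C\leftrightarrow C\triangle\{b\}$, and the conditions $C,C-\{b\}\in\coveringsets_{v,w}^z$ built into removability already supply everything needed to conclude.
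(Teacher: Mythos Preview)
Your argument is correct and, for the lemma as stated, cleaner than the paper's own proof. You simply unwind the definitions: since ``removable in $C$'' already requires $C-\{b\}\in\coveringsets_{v,w}^z$, the only thing to check for well-definedness is that $b$ nests in $C-\{b\}$, which follows because the outer arc witnessing nesting cannot be $b$ itself; the inverse direction is symmetric. Notably, your proof never uses the hypothesis that $\coverings(z)$ has no free covering inversions.

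The paper takes a different tack: it invokes the no-free-covering-inversions hypothesis together with Lemma~\ref{ComponentRemoval} to argue that removing a nested arc can only increase the inversion table of $z^{\remove(C)}_{\{1,\ldots,m\}}$, hence preserves membership in $\coveringsets_{v,w}^z$. This is actually a stronger fact---it shows that every nested arc in any $C\in\coveringsets_{v,w}^z$ is automatically removable in the technical sense---but for the lemma as written that conclusion is already built into the hypothesis ``$b$ is removable in $C$'', so the paper's proof establishes more than is needed here. The payoff of the paper's route is that this stronger observation feeds into the later iteration (Lemma following this one and Lemma~\ref{RemainingSets}); your proof buys simplicity and makes clear that the bijection itself is a formal consequence of how ``addable'' and ``removable'' are defined.
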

\begin{proof}
The goal is to show the function is well-defined, or $C-\{b\}\in  \coveringsets_{v,w}^{z}$.  Since there are no free covering inversions, either $z(j)$ is in the same connected component as $z(i)$ or $z(j)\leq m$.  By Lemma \ref{ComponentRemoval}, any nested arc removed increases the inversion table over $\{1,2,\ldots, m\}$, so if $C\in  \coveringsets_{v,w}^{z}$, then so is $C-\{b\}$.
\end{proof}
 
 Thus, there is a subset $\cR(\{b\})\subseteq \coveringsets_{v,w}^z$ such that
$$\sum_{C\in \coveringsets_{v,w}^z} (-1)^{|C|}=\sum_{C\in \cR(\{b\})} (-1)^{|C|}.$$
In this case, 
$$\cR(\{b\})=\{C\in \coveringsets_{v,w}^z\mid \text{$b$ is neither addable nor removable in $C$}\}.$$
The following iterates this procedure.   

 Let $\cB$ be a set of  covering inversions removable in $\coveringsets_{v,w}^z$.  Let 
$$
\cR(\cB)=\left\{C\in \coveringsets_{v,w}^z\mid \text{$C$ has no addable or removable inversions in $\cB$}\right\}.
$$
A \emph{removal sequence} of a set of removable inversions $\cB$ is a bijection 
$$\begin{array}{rccc} \eta: & \{1,2,\ldots, |\cB|\} & \longrightarrow &\cB\\
& j & \mapsto & \eta(j)\end{array}$$
 such that for each $1\leq j\leq |\cB|$,
$$\eta(j)\text{  is removable in  $C$ for some $C\in\cR(\{b\in \cB\mid \eta^{-1}(b)<j\})$.}$$
In this situation, let
\begin{equation*}
\cK_j(\cB;\eta)=\cR(\{b\in \cB\mid \eta^{-1}(b)< j\})-\cR(\{b\in \cB\mid \eta^{-1}(b)\leq j\}).
\end{equation*}
We obtain a set partition
$$\{\cK_j(\cB;\eta)\mid 1\leq j\leq |\cB|\}\cup \cR(\cB)$$
of  $\coveringsets_{v,w}^z$.  
\begin{example}
Suppose $z=917426358$, $v=7142635$, and $w=21$.  Then
\begin{align*}
\coverings(z)&=
\begin{tikzpicture}[scale=.5,baseline=0cm]
\foreach \x/\l in {1/9,2/1,3/7,4/4,5/2,6/6,7/3,8/5,9/8}
	{\node (\l) at (\x,0) [inner sep=-1pt] {$\bullet$};
	\node at (\x,-.3) {$\scscs\l$};}
\foreach \s/\t in {9/1,9/8,9/7,7/4,7/6,4/2,6/3,6/5}
	\draw  (\s) to  [in=120, out=60] (\t);
\end{tikzpicture}\\
\coveringsets_{v,w}^z&=\left\{\begin{array}{@{}c@{}}
\begin{tikzpicture}[scale=.3,baseline=0cm]
\foreach \x/\l in {1/9,2/1,3/7,4/4,5/2,6/6,7/3,8/5,9/8}
	{\node (\l) at (\x,0) [inner sep=-1pt] {$\scs\bullet$};
	\node at (\x,-.5) {$\scscs\l$};}
\foreach \s/\t in {9/7}
	\draw  (\s) to  [in=120, out=60] (\t);
\end{tikzpicture},
\begin{tikzpicture}[scale=.3,baseline=0cm]
\foreach \x/\l in {1/9,2/1,3/7,4/4,5/2,6/6,7/3,8/5,9/8}
	{\node (\l) at (\x,0) [inner sep=-1pt] {$\scs\bullet$};
	\node at (\x,-.5) {$\scscs\l$};}
\foreach \s/\t in {9/7,7/4}
	\draw  (\s) to  [in=120, out=60] (\t);
\end{tikzpicture},
\begin{tikzpicture}[scale=.3,baseline=0cm]
\foreach \x/\l in {1/9,2/1,3/7,4/4,5/2,6/6,7/3,8/5,9/8}
	{\node (\l) at (\x,0) [inner sep=-1pt] {$\scs\bullet$};
	\node at (\x,-.5) {$\scscs\l$};}
\foreach \s/\t in {9/7,7/4,4/2}
	\draw  (\s) to  [in=120, out=60] (\t);
\end{tikzpicture},
\begin{tikzpicture}[scale=.3,baseline=0cm]
\foreach \x/\l in {1/9,2/1,3/7,4/4,5/2,6/6,7/3,8/5,9/8}
	{\node (\l) at (\x,0) [inner sep=-1pt] {$\scs\bullet$};
	\node at (\x,-.5) {$\scscs\l$};}
\foreach \s/\t in {9/7,7/6}
	\draw  (\s) to  [in=120, out=60] (\t);
\end{tikzpicture}\\
\begin{tikzpicture}[scale=.3,baseline=0cm]
\foreach \x/\l in {1/9,2/1,3/7,4/4,5/2,6/6,7/3,8/5,9/8}
	{\node (\l) at (\x,0) [inner sep=-1pt] {$\scs\bullet$};
	\node at (\x,-.5) {$\scscs\l$};}
\foreach \s/\t in {9/7,7/4,7/6}
	\draw  (\s) to  [in=120, out=60] (\t);
\end{tikzpicture},
\begin{tikzpicture}[scale=.3,baseline=0cm]
\foreach \x/\l in {1/9,2/1,3/7,4/4,5/2,6/6,7/3,8/5,9/8}
	{\node (\l) at (\x,0) [inner sep=-1pt] {$\scs\bullet$};
	\node at (\x,-.5) {$\scscs\l$};}
\foreach \s/\t in {9/7,7/4,7/6,4/2}
	\draw  (\s) to  [in=120, out=60] (\t);
\end{tikzpicture},
\begin{tikzpicture}[scale=.3,baseline=0cm]
\foreach \x/\l in {1/9,2/1,3/7,4/4,5/2,6/6,7/3,8/5,9/8}
	{\node (\l) at (\x,0) [inner sep=-1pt] {$\scs\bullet$};
	\node at (\x,-.5) {$\scscs\l$};}
\foreach \s/\t in {9/7,7/6,4/2}
	\draw  (\s) to  [in=120, out=60] (\t);
\end{tikzpicture},
\begin{tikzpicture}[scale=.3,baseline=0cm]
\foreach \x/\l in {1/9,2/1,3/7,4/4,5/2,6/6,7/3,8/5,9/8}
	{\node (\l) at (\x,0) [inner sep=-1pt] {$\scs\bullet$};
	\node at (\x,-.5) {$\scscs\l$};}
\foreach \s/\t in {9/7,7/6,6/3}
	\draw  (\s) to  [in=120, out=60] (\t);
\end{tikzpicture}\\
\begin{tikzpicture}[scale=.3,baseline=0cm]
\foreach \x/\l in {1/9,2/1,3/7,4/4,5/2,6/6,7/3,8/5,9/8}
	{\node (\l) at (\x,0) [inner sep=-1pt] {$\scs\bullet$};
	\node at (\x,-.5) {$\scscs\l$};}
\foreach \s/\t in {9/7,7/4,7/6,6/3}
	\draw  (\s) to  [in=120, out=60] (\t);
\end{tikzpicture},
\begin{tikzpicture}[scale=.3,baseline=0cm]
\foreach \x/\l in {1/9,2/1,3/7,4/4,5/2,6/6,7/3,8/5,9/8}
	{\node (\l) at (\x,0) [inner sep=-1pt] {$\scs\bullet$};
	\node at (\x,-.5) {$\scscs\l$};}
\foreach \s/\t in {9/7,7/4,7/6,4/2,6/3}
	\draw  (\s) to  [in=120, out=60] (\t);
\end{tikzpicture},
\begin{tikzpicture}[scale=.3,baseline=0cm]
\foreach \x/\l in {1/9,2/1,3/7,4/4,5/2,6/6,7/3,8/5,9/8}
	{\node (\l) at (\x,0) [inner sep=-1pt] {$\scs\bullet$};
	\node at (\x,-.5) {$\scscs\l$};}
\foreach \s/\t in {9/7,7/6,4/2,6/3}
	\draw  (\s) to  [in=120, out=60] (\t);
\end{tikzpicture},
\begin{tikzpicture}[scale=.3,baseline=0cm]
\foreach \x/\l in {1/9,2/1,3/7,4/4,5/2,6/6,7/3,8/5,9/8}
	{\node (\l) at (\x,0) [inner sep=-1pt] {$\scs\bullet$};
	\node at (\x,-.5) {$\scscs\l$};}
\foreach \s/\t in {9/7,7/6,6/5}
	\draw  (\s) to  [in=120, out=60] (\t);
\end{tikzpicture}\\
\begin{tikzpicture}[scale=.3,baseline=0cm]
\foreach \x/\l in {1/9,2/1,3/7,4/4,5/2,6/6,7/3,8/5,9/8}
	{\node (\l) at (\x,0) [inner sep=-1pt] {$\scs\bullet$};
	\node at (\x,-.5) {$\scscs\l$};}
\foreach \s/\t in {9/7,7/4,7/6,6/5}
	\draw  (\s) to  [in=120, out=60] (\t);
\end{tikzpicture},
\begin{tikzpicture}[scale=.3,baseline=0cm]
\foreach \x/\l in {1/9,2/1,3/7,4/4,5/2,6/6,7/3,8/5,9/8}
	{\node (\l) at (\x,0) [inner sep=-1pt] {$\scs\bullet$};
	\node at (\x,-.5) {$\scscs\l$};}
\foreach \s/\t in {9/7,7/4,7/6,4/2,6/5}
	\draw  (\s) to  [in=120, out=60] (\t);
\end{tikzpicture},
\begin{tikzpicture}[scale=.3,baseline=0cm]
\foreach \x/\l in {1/9,2/1,3/7,4/4,5/2,6/6,7/3,8/5,9/8}
	{\node (\l) at (\x,0) [inner sep=-1pt] {$\scs\bullet$};
	\node at (\x,-.5) {$\scscs\l$};}
\foreach \s/\t in {9/7,7/6,4/2,6/5}
	\draw  (\s) to  [in=120, out=60] (\t);
\end{tikzpicture},
\begin{tikzpicture}[scale=.3,baseline=0cm]
\foreach \x/\l in {1/9,2/1,3/7,4/4,5/2,6/6,7/3,8/5,9/8}
	{\node (\l) at (\x,0) [inner sep=-1pt] {$\scs\bullet$};
	\node at (\x,-.5) {$\scscs\l$};}
\foreach \s/\t in {9/7,7/6,6/3,6/5}
	\draw  (\s) to  [in=120, out=60] (\t);
\end{tikzpicture}\\
\begin{tikzpicture}[scale=.3,baseline=0cm]
\foreach \x/\l in {1/9,2/1,3/7,4/4,5/2,6/6,7/3,8/5,9/8}
	{\node (\l) at (\x,0) [inner sep=-1pt] {$\scs\bullet$};
	\node at (\x,-.5) {$\scscs\l$};}
\foreach \s/\t in {9/7,7/4,7/6,6/3,6/5}
	\draw  (\s) to  [in=120, out=60] (\t);
\end{tikzpicture},
\begin{tikzpicture}[scale=.3,baseline=0cm]
\foreach \x/\l in {1/9,2/1,3/7,4/4,5/2,6/6,7/3,8/5,9/8}
	{\node (\l) at (\x,0) [inner sep=-1pt] {$\scs\bullet$};
	\node at (\x,-.5) {$\scscs\l$};}
\foreach \s/\t in {9/7,7/4,7/6,4/2,6/3,6/5}
	\draw  (\s) to  [in=120, out=60] (\t);
\end{tikzpicture},
\begin{tikzpicture}[scale=.3,baseline=0cm]
\foreach \x/\l in {1/9,2/1,3/7,4/4,5/2,6/6,7/3,8/5,9/8}
	{\node (\l) at (\x,0) [inner sep=-1pt] {$\scs\bullet$};
	\node at (\x,-.5) {$\scscs\l$};}
\foreach \s/\t in {9/7,7/6,4/2,6/3,6/5}
	\draw  (\s) to  [in=120, out=60] (\t);
\end{tikzpicture}
\end{array}\right\}
\end{align*}
Let 
$$\cB=\{(6,3),(4,2)\}\quad \text{with}\quad (\eta(1),\eta(2))=((6,3),(4,2)).$$
Then
\begin{align*}
\cK_1(\cB;\eta)&=\left\{\begin{array}{@{}c@{}}
\begin{tikzpicture}[scale=.3,baseline=0cm]
\foreach \x/\l in {1/9,2/1,3/7,4/4,5/2,6/6,7/3,8/5,9/8}
	{\node (\l) at (\x,0) [inner sep=-1pt] {$\scs\bullet$};
	\node at (\x,-.5) {$\scscs\l$};}
\foreach \s/\t in {9/7,7/6,6/5}
	\draw  (\s) to  [in=120, out=60] (\t);
\end{tikzpicture},
\begin{tikzpicture}[scale=.3,baseline=0cm]
\foreach \x/\l in {1/9,2/1,3/7,4/4,5/2,6/6,7/3,8/5,9/8}
	{\node (\l) at (\x,0) [inner sep=-1pt] {$\scs\bullet$};
	\node at (\x,-.5) {$\scscs\l$};}
\foreach \s/\t in {9/7,7/4,7/6,6/5}
	\draw  (\s) to  [in=120, out=60] (\t);
\end{tikzpicture},
\begin{tikzpicture}[scale=.3,baseline=0cm]
\foreach \x/\l in {1/9,2/1,3/7,4/4,5/2,6/6,7/3,8/5,9/8}
	{\node (\l) at (\x,0) [inner sep=-1pt] {$\scs\bullet$};
	\node at (\x,-.5) {$\scscs\l$};}
\foreach \s/\t in {9/7,7/4,7/6,4/2,6/5}
	\draw  (\s) to  [in=120, out=60] (\t);
\end{tikzpicture},
\begin{tikzpicture}[scale=.3,baseline=0cm]
\foreach \x/\l in {1/9,2/1,3/7,4/4,5/2,6/6,7/3,8/5,9/8}
	{\node (\l) at (\x,0) [inner sep=-1pt] {$\scs\bullet$};
	\node at (\x,-.5) {$\scscs\l$};}
\foreach \s/\t in {9/7,7/6,4/2,6/5}
	\draw  (\s) to  [in=120, out=60] (\t);
\end{tikzpicture}\\
\begin{tikzpicture}[scale=.3,baseline=0cm]
\foreach \x/\l in {1/9,2/1,3/7,4/4,5/2,6/6,7/3,8/5,9/8}
	{\node (\l) at (\x,0) [inner sep=-1pt] {$\scs\bullet$};
	\node at (\x,-.5) {$\scscs\l$};}
\foreach \s/\t in {9/7,7/6,6/3,6/5}
	\draw  (\s) to  [in=120, out=60] (\t);
\draw[thick] (6) to  [in=120, out=60] (3);
\end{tikzpicture},
\begin{tikzpicture}[scale=.3,baseline=0cm]
\foreach \x/\l in {1/9,2/1,3/7,4/4,5/2,6/6,7/3,8/5,9/8}
	{\node (\l) at (\x,0) [inner sep=-1pt] {$\scs\bullet$};
	\node at (\x,-.5) {$\scscs\l$};}
\foreach \s/\t in {9/7,7/4,7/6,6/3,6/5}
	\draw  (\s) to  [in=120, out=60] (\t);
\draw[thick] (6) to  [in=120, out=60] (3);
\end{tikzpicture},
\begin{tikzpicture}[scale=.3,baseline=0cm]
\foreach \x/\l in {1/9,2/1,3/7,4/4,5/2,6/6,7/3,8/5,9/8}
	{\node (\l) at (\x,0) [inner sep=-1pt] {$\scs\bullet$};
	\node at (\x,-.5) {$\scscs\l$};}
\foreach \s/\t in {9/7,7/4,7/6,4/2,6/3,6/5}
	\draw  (\s) to  [in=120, out=60] (\t);
\draw[thick] (6) to  [in=120, out=60] (3);
\end{tikzpicture},
\begin{tikzpicture}[scale=.3,baseline=0cm]
\foreach \x/\l in {1/9,2/1,3/7,4/4,5/2,6/6,7/3,8/5,9/8}
	{\node (\l) at (\x,0) [inner sep=-1pt] {$\scs\bullet$};
	\node at (\x,-.5) {$\scscs\l$};}
\foreach \s/\t in {9/7,7/6,4/2,6/3,6/5}
	\draw  (\s) to  [in=120, out=60] (\t);
\draw[thick] (6) to  [in=120, out=60] (3);
\end{tikzpicture}
\end{array}\right\}\\
\cK_2(\cB;\eta)&=\left\{\begin{array}{@{}c@{}}
\begin{tikzpicture}[scale=.3,baseline=0cm]
\foreach \x/\l in {1/9,2/1,3/7,4/4,5/2,6/6,7/3,8/5,9/8}
	{\node (\l) at (\x,0) [inner sep=-1pt] {$\scs\bullet$};
	\node at (\x,-.5) {$\scscs\l$};}
\foreach \s/\t in {9/7,7/6}
	\draw  (\s) to  [in=120, out=60] (\t);
\end{tikzpicture},
\begin{tikzpicture}[scale=.3,baseline=0cm]
\foreach \x/\l in {1/9,2/1,3/7,4/4,5/2,6/6,7/3,8/5,9/8}
	{\node (\l) at (\x,0) [inner sep=-1pt] {$\scs\bullet$};
	\node at (\x,-.5) {$\scscs\l$};}
\foreach \s/\t in {9/7,7/4,7/6}
	\draw  (\s) to  [in=120, out=60] (\t);
\end{tikzpicture},
\begin{tikzpicture}[scale=.3,baseline=0cm]
\foreach \x/\l in {1/9,2/1,3/7,4/4,5/2,6/6,7/3,8/5,9/8}
	{\node (\l) at (\x,0) [inner sep=-1pt] {$\scs\bullet$};
	\node at (\x,-.5) {$\scscs\l$};}
\foreach \s/\t in {9/7,7/6,6/3}
	\draw  (\s) to  [in=120, out=60] (\t);
\end{tikzpicture},
\begin{tikzpicture}[scale=.3,baseline=0cm]
\foreach \x/\l in {1/9,2/1,3/7,4/4,5/2,6/6,7/3,8/5,9/8}
	{\node (\l) at (\x,0) [inner sep=-1pt] {$\scs\bullet$};
	\node at (\x,-.5) {$\scscs\l$};}
\foreach \s/\t in {9/7,7/4,7/6,6/3}
	\draw  (\s) to  [in=120, out=60] (\t);
\end{tikzpicture}\\
\begin{tikzpicture}[scale=.3,baseline=0cm]
\foreach \x/\l in {1/9,2/1,3/7,4/4,5/2,6/6,7/3,8/5,9/8}
	{\node (\l) at (\x,0) [inner sep=-1pt] {$\scs\bullet$};
	\node at (\x,-.5) {$\scscs\l$};}
\foreach \s/\t in {9/7,7/6,4/2}
	\draw  (\s) to  [in=120, out=60] (\t);
\draw[thick] (4) to  [in=120, out=60] (2);
\end{tikzpicture},
\begin{tikzpicture}[scale=.3,baseline=0cm]
\foreach \x/\l in {1/9,2/1,3/7,4/4,5/2,6/6,7/3,8/5,9/8}
	{\node (\l) at (\x,0) [inner sep=-1pt] {$\scs\bullet$};
	\node at (\x,-.5) {$\scscs\l$};}
\foreach \s/\t in {9/7,7/4,7/6,4/2}
	\draw  (\s) to  [in=120, out=60] (\t);
\draw[thick] (4) to  [in=120, out=60] (2);
\end{tikzpicture},
\begin{tikzpicture}[scale=.3,baseline=0cm]
\foreach \x/\l in {1/9,2/1,3/7,4/4,5/2,6/6,7/3,8/5,9/8}
	{\node (\l) at (\x,0) [inner sep=-1pt] {$\scs\bullet$};
	\node at (\x,-.5) {$\scscs\l$};}
\foreach \s/\t in {9/7,7/6,4/2,6/3}
	\draw  (\s) to  [in=120, out=60] (\t);
\draw[thick] (4) to  [in=120, out=60] (2);
\end{tikzpicture},
\begin{tikzpicture}[scale=.3,baseline=0cm]
\foreach \x/\l in {1/9,2/1,3/7,4/4,5/2,6/6,7/3,8/5,9/8}
	{\node (\l) at (\x,0) [inner sep=-1pt] {$\scs\bullet$};
	\node at (\x,-.5) {$\scscs\l$};}
\foreach \s/\t in {9/7,7/4,7/6,4/2,6/3}
	\draw  (\s) to  [in=120, out=60] (\t);
\draw[thick] (4) to  [in=120, out=60] (2);
\end{tikzpicture}
\end{array}\right\}\\
\cR(\cB)&=\Big\{
\begin{tikzpicture}[scale=.3,baseline=0cm]
\foreach \x/\l in {1/9,2/1,3/7,4/4,5/2,6/6,7/3,8/5,9/8}
	{\node (\l) at (\x,0) [inner sep=-1pt] {$\scs\bullet$};
	\node at (\x,-.5) {$\scscs\l$};}
\foreach \s/\t in {9/7}
	\draw  (\s) to  [in=120, out=60] (\t);
\end{tikzpicture},
\begin{tikzpicture}[scale=.3,baseline=0cm]
\foreach \x/\l in {1/9,2/1,3/7,4/4,5/2,6/6,7/3,8/5,9/8}
	{\node (\l) at (\x,0) [inner sep=-1pt] {$\scs\bullet$};
	\node at (\x,-.5) {$\scscs\l$};}
\foreach \s/\t in {9/7,7/4}
	\draw  (\s) to  [in=120, out=60] (\t);
\end{tikzpicture},
\begin{tikzpicture}[scale=.3,baseline=0cm]
\foreach \x/\l in {1/9,2/1,3/7,4/4,5/2,6/6,7/3,8/5,9/8}
	{\node (\l) at (\x,0) [inner sep=-1pt] {$\scs\bullet$};
	\node at (\x,-.5) {$\scscs\l$};}
\foreach \s/\t in {9/7,7/4,4/2}
	\draw  (\s) to  [in=120, out=60] (\t);
\end{tikzpicture}
\Big\}.
\end{align*}
\end{example}

Note by the definition of $\cK_j(\cB;\eta)$, 
$$\cK_j(\cB;\eta)=\left\{C\in \cK_{j}(\cB;\eta)\mid  \eta(j)\text{ removable in $C$}\right\} \cup \{D\in  \cK_{j}(\cB;\eta)\mid \text{$\eta(j)$ addable in $D$}\}.$$
The following lemma finds sign-reversing bijection between these two non-intersecting subsets in the way suggested by Lemma \ref{BaseNesting}.

 \begin{lemma}
Suppose $\cB$ is a set of covering inversions  removable  in $\coveringsets_{v,w}^z$ that admits a removal sequence $\eta$.  Then for $1\leq j\leq |\cB|$, the function
$$\begin{array}{ccc} \left\{C\in \cK_{j}(\cB;\eta)\mid  \eta(j)\text{ removable in $C$}\right\} & \longrightarrow & \{D\in  \cK_{j}(\cB;\eta)\mid \text{$\eta(j)$ addable in $D$}\}\\
C & \mapsto & C-\{\eta(j)\}
\end{array}$$
is a bijection.
\end{lemma}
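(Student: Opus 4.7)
The plan is to mirror the proof of Lemma \ref{BaseNesting} while tracking the extra constraint $C \in \cR(\{b \in \cB \mid \eta^{-1}(b) < j\})$ built into the definition of $\cK_j(\cB;\eta)$.  The obvious inverse map is $D \mapsto D \cup \{\eta(j)\}$, so injectivity is automatic and all the work lies in verifying well-definedness in both directions.  Concretely, given $C \in \cK_j(\cB;\eta)$ with $\eta(j)$ removable in $C$, I will establish three things: (i) $C - \{\eta(j)\} \in \coveringsets_{v,w}^z$; (ii) $C - \{\eta(j)\} \in \cR(\{b \in \cB \mid \eta^{-1}(b) < j\})$; and (iii) $\eta(j)$ is addable in $C - \{\eta(j)\}$.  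The surjectivity direction, starting from $D$ with $\eta(j)$ addable and setting $C = D \cup \{\eta(j)\}$, runs symmetrically.

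For (i) and (iii) I would reuse the argument of Lemma \ref{BaseNesting} essentially verbatim: since $\eta(j)$ is removable in $C$, Lemma \ref{ComponentRemoval} shows the removal can only increase the inversion table on $\{1,\ldots,m\}$, so the deshuffle condition is preserved, and $\eta(j)$ may be added back into $C - \{\eta(j)\}$ to recover a set in $\coveringsets_{v,w}^z$.

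Step (ii) is the technical heart of the argument.  The concern is that removing $\eta(j)$ might make some earlier $b = \eta(i)$, $i < j$, newly addable or removable, contradicting $C \in \cR(\{b \in \cB \mid \eta^{-1}(b) < j\})$.  Here I would use two observations.  First, nesting is monotone in the arc set: if $b$ nests in $C - \{\eta(j)\}$ then $b$ nests in $C$, which handles the nesting clause in the definitions of addable and removable.  Second, Lemma \ref{ComponentRemoval} localizes the inversion-table effect of removing a connected component to a specific interval of coordinates, so the effect of removing $\eta(j)$ and that of independently adding or removing $b$ act on essentially disjoint windows of the deshuffled inversion tables.  Combining these, the obstruction that prevented $b$ from being addable or removable in $C$ survives intact in $C - \{\eta(j)\}$.

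The main obstacle is this disjointness-of-effects step in (ii); once granted, everything else is bookkeeping.  Executing it cleanly will likely require a short case analysis on the relative positions of the endpoints of $\eta(j)$ and $b$ (whether one nests inside the other, whether the components are disjoint, and where the endpoints sit relative to $m$), resolved by direct appeal to the explicit inversion-table formulas in Lemma \ref{ComponentRemoval}.
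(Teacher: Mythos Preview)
Your plan matches the paper's approach: the crux is your step (ii), ensuring that no earlier $\eta(i)$ becomes addable or removable in $D=C\setminus\{\eta(j)\}$. The paper dispatches this in a single sentence---arguing that if $\eta(i)$ were addable or removable in $D$ then ``adding $\eta(j)$ cannot affect'' this status, so the same would hold in $C$, contradicting $C\in\cK_j(\cB;\eta)$---rather than through the localization and disjoint-windows case analysis you sketch; your nesting-monotonicity observation is precisely what underlies their one-liner, so the heavier machinery you propose is more than the paper actually deploys.
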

\begin{proof}
The goal is to show that the bijection is well-defined.  Suppose $C-\{\eta(j)\}\notin \cK_i(\cB;\eta)$ for some $i\leq j$.    Then $\eta(i)$ is either addable or removable in $C-\{\eta(j)\}$.  But adding $\eta(j)$ cannot affect whether $\eta(i)$ is addable or removable, so $C\in  \cK_i(\cB;\eta)$ and $i=j$.
\end{proof}
As a consequence of this iterated procedure,
\begin{equation} \label{LastIteration}
\sum_{C\in \coveringsets_{v,w}^z} (-1)^{|C|}=\sum_{C\in \cR(\cB)} (-1)^{|C|}.
\end{equation}
For $1\leq i\leq m+n$, let
$$\possibilities_{v,w}^z(i)=\left\{C\subseteq \coverings(z)\ \bigg|\ \begin{array}{@{}l@{}}\text{$C$ a connected component for some}\\ \text{$D\in  \coreset_{v,w}^z$ with smallest position $i$}\end{array}\right\},$$
where we recall the notation $\coreset_{v,w}^z$ from (\ref{CoreSet}).

\begin{lemma}\label{RemainingSets}
Let $\cB$ be a set of covering inversions removable in $\coveringsets_{v,w}^z$ maximal with the property that there exists a removal sequence $\eta$.  Then
\begin{enumerate}
\item[(a)] $\cR(\cB)=\coreset_{v,w}^z$,
\item[(b)] for $z(i)>m$, the set $\possibilities_{v,w}^z(i)$ forms a chain under containment, such that if $B$ covers $A$ in $\possibilities_{v,w}^z(i)$, then $|B-A|=1$.
\end{enumerate}
\end{lemma}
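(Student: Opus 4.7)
The plan is to prove the two parts separately, both leveraging the iterative structure used to construct $\cB$ and $\eta$.

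For part (a), the inclusion $\coreset_{v,w}^z \subseteq \cR(\cB)$ is immediate from the definitions, since a core element has no addable or removable inversions at all, and in particular none from $\cB$. For the reverse inclusion, I argue by contradiction: suppose $C \in \cR(\cB)$ admits either an addable or a removable inversion $b$. Since $C \in \cR(\cB)$ forbids addable and removable inversions lying in $\cB$, we must have $b \notin \cB$. I then aim to extend $\eta$ by appending $b$ as the $(|\cB|+1)$-st term, producing a longer removable set $\cB \cup \{b\}$ admitting a removal sequence, contradicting the maximality of $\cB$. If $b$ is removable in $C$ itself, the extension is immediate since $C \in \cR(\cB)$ witnesses the required removability. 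If instead $b$ is addable in $C$, then $b$ is removable in $C \cup \{b\} \in \coveringsets_{v,w}^z$, and I would verify $C \cup \{b\} \in \cR(\cB)$ so that it serves as the required witness; this uses Lemma~\ref{ComponentRemoval} to control how inserting the single arc $b$ perturbs the restricted inversion tables, guaranteeing that it cannot create a new addable or removable inversion lying inside $\cB$.

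For part (b), fix $i$ with $z(i) > m$. Any component $C$ of a core element $D \in \coreset_{v,w}^z$ beginning at position $i$ is of Case~3 type: it starts with an arc $(z(i), z(k))$ with $z(k) \leq m$ and continues as a non-crossing forest of nested sub-arcs to the right of $i$. To establish the chain property, I would show that for any two $C_1, C_2 \in \possibilities_{v,w}^z(i)$, one refines the other. Comparing their outgoing arcs at $i$, if the two differ then within the corresponding enclosing core configurations an arc appearing in one component but not the other becomes either addable or removable (the nesting condition being supplied by the shared outer arc or a common ancestor), contradicting the core property. Iterating this comparison on the nested sub-components yields $C_1 \subseteq C_2$ or $C_2 \subseteq C_1$. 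For the cover relation, if $A \subsetneq B$ in $\possibilities_{v,w}^z(i)$, I would delete a single leaf arc of $B$ not in $A$ and verify via Lemma~\ref{ComponentRemoval} that the resulting set still belongs to $\possibilities_{v,w}^z(i)$, strictly between $A$ and $B$ whenever more than one arc distinguishes them.

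The main obstacle I expect is the addable case in part (a), and, closely related, the component-by-component comparison in (b). Both require ruling out that inserting or deleting a single nested covering inversion spawns unexpected addable or removable partners elsewhere in the arc diagram. The case analysis accompanying Lemmas~\ref{FreeLemma} and~\ref{ComponentRemoval} provides the right framework, but the non-local effect of Case~3 inversions on the restricted inversion tables will make the verification delicate, particularly when the newly inserted arc interacts with several existing arcs belonging to $\cB$.
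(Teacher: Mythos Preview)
Your approach to part~(a) matches the paper's closely. Both note that $\coreset_{v,w}^z \subseteq \cR(\cB)$ by definition, dispatch the removable case via maximality of $\cB$, and handle the addable case by looking at $D = C \cup \{b\}$. The paper phrases the addable case as: $D$ must land in some $\cK_j(\cB;\eta)$, and since $b \neq \eta(j)$ and nesting is transitive (if $\eta(j)$ nests only under $b$, then it nests under whatever $b$ nests under in $C$), $\eta(j)$ is also addable or removable in $C$ itself, contradicting $C \in \cR(\cB)$. Your version---verify $D \in \cR(\cB)$ so that $b$ can be appended to $\eta$---is the contrapositive of the same observation; the verification needs only transitivity of nesting, not the inversion-table bookkeeping of Lemma~\ref{ComponentRemoval}, so the obstacle you flag is lighter than you expect.

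For part~(b) your strategy is the right one, but your structural picture is off, and this matters for the execution. The connected components of a \emph{core} element are not ``forests of nested sub-arcs'' but \emph{paths} of concatenated arcs. Every position is the right endpoint of at most one covering inversion; and if some position were the left endpoint of two arcs in a core element, the shorter arc would nest inside the longer (the definition of nesting allows $i=j$) and hence be removable by the argument in Lemma~\ref{BaseNesting}, contradicting the core condition. Thus each $A \in \possibilities_{v,w}^z(i)$ has the form $\{(z(i),z(a_1)),(z(a_1),z(a_2)),\ldots,(z(a_{\ell-1}),z(a_\ell))\}$ for increasing positions $i<a_1<\cdots<a_\ell$. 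With this in hand the paper's argument is short: compare two such paths $A,B$ step by step from position $i$; at the first divergence $a_h=b_h$, $a_{h+1}\neq b_{h+1}$, the shorter arc nests inside the longer, making it addable in the core element containing the other path---a contradiction. The cover statement follows because if $A\subsetneq B$ agree through step $\ell$, then $A\cup\{(z(a_\ell),z(b_{\ell+1}))\}$ already lies in $\possibilities_{v,w}^z(i)$. Your plan to ``iterate on nested sub-components'' and ``delete a leaf arc of $B$'' reflects the forest picture and would not translate directly; once you replace it with the path picture, the argument you sketch becomes exactly the paper's.
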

\begin{proof}
(a) By definition $\coreset_{v,w}^z\subseteq \cR(\cB)$.  Let $C\in \cR(\cB)$.  By the maximality of $\cB$, $C$ has no removable inversions.  Suppose $b$ is an addable inversion for $C$.  Then $D=C\cup\{b\}\in \cK_j(\cB;\eta)$ for some $1\leq j\leq |\cB|$.    By assumption $b\neq \eta(j)$.  Thus, $\eta(j)\in C$.  Since $\eta(j)$ is nested in $D$ it must also be so in $C$ (worst case it is nested in $b$, but $b$ is also nested).  Thus, $C\in \cK_j(\cB;\eta)$, a contradiction.

(b) Suppose $A, B\in \possibilities_{v,w}^z(i)$ with $B\neq \emptyset$.   Let $k$ be the largest position of $B$.    If $A=\emptyset$, then $A\subseteq B$.  Else,  let $A=\{(i,a_1),(a_1,a_2),\ldots,(a_{\ell-1},a_\ell)\}$ and $B=\{(i,b_1),(b_1,b_2),\ldots,(b_{r-1},b_r)\}$.  WLOG suppose $\ell\leq r$.  If $A\nsubseteq B$, then there exists a minimal $h<\ell$ such that $(a_h,a_{h+1})\neq (b_h,b_{h+1})$ (in particular, $a_h=b_h$ and $a_{h+1}\neq b_{h+1}$).  However, in this case either  $(a_h,a_{h+1})$ is addable in $B$ or $(b_h,b_{h+1})$ is addable in  $A$, both contradicting (a).  Thus, $A\subseteq B$.  If $B$ covers $A$, then by the above argument,  $(a_h,a_{h+1})= (b_h,b_{h+1})$ for all $h\leq \ell$.  Now $A,B\in  \possibilities_{v,w}^z(i)$ implies $A\cup \{(b_{\ell},b_{\ell+1})\}\in \possibilities_{v,w}^z(i)$, so $r=\ell+1$.
\end{proof}

\begin{proof}[Proof of Theorem \ref{PermutationCharacterProduct}] 
Choose a set $\cB$ of covering inversions removable in $\coveringsets_{v,w}^z$ maximal with the property that there exists a removal sequence $\eta$.     Note that by Lemma \ref{RemainingSets} (a) and  (\ref{LastIteration}),  we are interested in
$$\sum_{C\in \coreset_{v,w}^z} (-1)^{|C|}.$$
Let 
$$I=\{1\leq i\leq m+n\mid z(i)>m,z(i+1)\leq m\}.$$
Since $\coveringsets_{v,w}^z$ has no free covering inversions, every $C\in \coreset_{v,w}^z$ has connected components whose smallest positions are in $I$.  Write 
$$C=\bigcup_{i\in I} C_i,$$
 where $C_{i}\in \possibilities_{v,w}^z(i)$.      Conversely, each collection of choices $B_{i}\in  \possibilities_{v,w}^z(i)$ for $i\in I$ gives
 $$\bigcup_{i\in I} B_i\in \coreset_{v,w}^z.$$
 Let $\min\in \coreset_{v,w}^z$ be the choice where $\min_{i}\in \possibilities_{v,w}^z(i)$ is minimal for each $i\in I$.  
Then 
 $$\sum_{C\in \coreset_{v,w}^z} (-1)^{|C|}=\prod_{r=1}^\ell \bigg(\sum_{C_{i}\in \possibilities_{v,w}^z(i)} (-1)^{|C_{i}|}\bigg)=\left\{\begin{array}{ll} 
\dd \prod_{i\in I} (-1)^{|\min_{i}|} & \text{if $|\possibilities_{v,w}^z(i)|\notin2\ZZ$ for all $i\in I$,}\\ 0 & \text{otherwise.}
 \end{array}\right.$$
Finally, $\core_{v,w}(z)=\sum_{i\in I} |\min_{i}|$ if $|\possibilities_{v,w}^z(i)|\notin 2\ZZ$ for all $i\in I$ and 0 otherwise, recovering the result.
\end{proof}

\section{A final Hopf monoid remark}

The representation theoretic point of view suggests a Hopf monoid version of $\FQSym$, specifically a Hopf monoid in the monoidal category of complex vector species with a Cauchy product. We refer the reader to \cite[Chapter 8]{AM10} for more details on such Hopf monoids. Define a vector species
$$\begin{array}{rccc} \scf(\fkut):& \{\text{sets}\} & \longrightarrow & \{\text{vector spaces}\}\\
& A & \mapsto & \bigoplus_{\phi\in \cL[A]} \scf(\fkut_\phi),
\end{array}$$
where $\cL[A]$ is the set of linear orders on $A$ and $\fkut_\phi$ is the group of upper-triangular matrices with rows and columns indexed by $A$ in the order given by $\phi$. 

For disjoint sets $A$ and $B$ with $\phi\in \cL[A]$, $\tau\in\cL[B]$ and $\gamma\in \cL[A\cup B]$, define a product by the linear extension of
\begin{equation}\label{MonoidProduct}
\begin{array}{rccc}m_{B,A} :& \scf(\fkut_\phi) \otimes \scf(\fkut_\tau) & \longrightarrow & \dd\bigoplus_{\gamma\in \phi\shuffle \tau} \scf(\fkut_{\gamma})\\
& \psi \otimes \eta & \mapsto &\dd \sum_{\gamma\in \phi\shuffle \tau}   \Sfl_{\fkut_\phi\times \fkut_\tau}^{\fkut_{\gamma}} (\psi^\star\otimes\eta^\star)^\star,
\end{array}
\end{equation}
and coproduct by the linear extension of
\begin{equation}\label{MonoidCoProduct}
\begin{array}{rccc} \Delta_{B,A}: & \scf(\fkut_\gamma)  & \longrightarrow & \scf(\fkut_{\gamma_B})\otimes \scf(\fkut_{\gamma_A})\\
&\psi  & \mapsto & \Dela_{\fkut_{\gamma_B}\times \fkut_{\gamma_A}}^{\fkut_{\gamma}} (\psi).
\end{array}
\end{equation} 
\begin{remark}
Unlike in \cite{ABT}, the underlying functions on the towers of groups do not come from a Hopf structure on linear orders.  There will therefore not be an (obvious) Hadamard product in this case.
\end{remark}

\begin{theorem}
The species $\scf(\fkut)$ is a connected Hopf monoid under the product (\ref{MonoidProduct}) and (\ref{MonoidCoProduct}).
\end{theorem}
\begin{proof}  Since $\scf(\fkut)$ is connected, the unit and counit are inverses of one-another \cite[Proposition 8.11]{AM10}. It suffices to check the various compatibility conditions summarized in \cite[8.1--8.3]{AM10}  Most are straightforward checks, and we focus on the most interesting one
$$\Delta_{T,B}\circ m_{L,R}=(m_{LT,RT}\otimes m_{LB,RB})\circ (\Delta_{LT,LB}\otimes \Delta_{RT,RB}),$$
where $L\sqcup R=T\sqcup B$.  Let 
$$LT=L\cap T,\quad RT=R\cap T,\quad LB=L\cap B\quad \text{and} \quad RB=R\cap B.$$
For $v\in S_\phi$ and $w\in S_\tau$,
\begin{align*}
\mathrm{LHS}(\chi^v\otimes \chi ^w) &=\sum_{\gamma\in \phi\shuffle\tau} \Dela_{\fkut_{\gamma_T}\times \fkut_{\gamma_B}}^{\fkut_{\gamma}}(\chi^{v\shuffle_{\gamma^{-1}(R)} w})\\
&=\sum_{\gamma\in \phi\shuffle \tau\atop T=(v\shuffle_{\gamma^{-1}(R)} w)\circ\gamma(\{1,2,\ldots,|T|\})} \chi^{(v\shuffle_{\gamma^{-1}(R)} w)_T}\otimes\chi^{(v\shuffle_{\gamma^{-1}(R)} w)_B}.
\end{align*}
On the other hand,
\begin{align*}
(\Delta_{LT,LB}\otimes \Delta_{RT,RB})&(\chi^v\otimes \chi ^w) \\
&= \left\{\begin{array}{ll} \chi^{v_{LT}}\otimes \chi^{v_{LB}}\otimes \chi^{w_{RT}}\otimes \chi^{w_{RB}} & \begin{array}{@{}l@{}}\text{if $LT=\{v\circ\phi(j)\mid 1\leq j\leq |LT|\}$}\\  RT=\{w\circ\tau(j)\mid 1\leq j\leq |RT|\},\end{array}\\
0 & \text{otherwise,}\end{array}\right.
\end{align*}
so 
\begin{align*}
\mathrm{RHS}&(\chi^v\otimes \chi ^w)\\ &= \left\{\begin{array}{ll}
\dd \sum_{\alpha\in \phi_{LT}\shuffle \tau_{RT}\atop \beta\in \phi_{LB}\shuffle \tau_{RB}}\chi^{v_{LT}\shuffle_{\alpha^{-1}(RT)} w_{RT}}\otimes \chi^{v_{LB}\shuffle_{\beta^{-1}(RB)} w_{RB}} & \begin{array}{@{}l@{}}\text{if $LT=\{v\circ\phi(j)\mid 1\leq j\leq |LT|\}$}\\  RT=\{w\circ\tau(j)\mid 1\leq j\leq |RT|\},\end{array}\\
0 & \text{otherwise.}\end{array}\right.
\end{align*}
Note that by inspection both the LHS and RHS are multiplicity free.   Suppose 
$$\Big((v\shuffle_{\gamma^{-1}(R)} w)_T,(v\shuffle_{\gamma^{-1}(R)} w)_B\Big)$$
satisfies $\gamma\in \phi\shuffle \tau, T=(v\shuffle_{\gamma^{-1}(R)} w)\circ\gamma(\{1,2,\ldots,|T|\})\}$.  Then for $1\leq j\leq |T|$,
\begin{itemize}
\item $\gamma(j)\in L$ if and only if $v(\phi(\tilde j))\in LT$, where $\tilde{j}=j-\#\{i<j\mid \gamma(i)\in R\}$,
\item $\gamma(j)\in R$ if and only if $w(\tau(j'))\in RT$, where $j'=j-\#\{i<j\mid \gamma(i)\in L\}$.
\end{itemize}
Thus,
$$\Big((v\shuffle_{\gamma^{-1}(R)} w)_T,(v\shuffle_{\gamma^{-1}(R)} w)_B\Big)=\Big(v_{LT}\shuffle_{\gamma_T^{-1}(RT)} w_{RT},v_{LB}\shuffle_{\gamma_B^{-1}(RB)} w_{RB}\Big),$$
where $LT=\{v\circ\phi(j)\mid 1\leq j\leq |LT|\}$ and $RT=\{w\circ\tau(j)\mid 1\leq j\leq |RT|\}$. 

Conversely, if 
$$\Big(v_{LT}\shuffle_{\alpha^{-1}(RT)} w_{RT},v_{LB}\shuffle_{\beta^{-1}(RB)} w_{RB}\Big),$$
satisfies $\alpha\in \phi_{LT}\shuffle \tau_{RT}$, $\beta\in \phi_{LB}\shuffle \tau_{RB}$, $LT=\{v\circ\phi(j)\mid 1\leq j\leq |LT|\}$ and $RT=\{w\circ\tau(j)\mid 1\leq j\leq |RT|\}$, then
$$\Big(v_{LT}\shuffle_{\alpha^{-1}(RT)} w_{RT},v_{LB}\shuffle_{\beta^{-1}(RB)} w_{RB}\Big)=\Big((v\shuffle_{(\alpha.\beta)^{-1}(R)} w)_T,(v\shuffle_{(\alpha.\beta)^{-1}(R)} w)_B\Big),$$
where $T=(v\shuffle_{(\alpha.\beta)^{-1}(R)} w)\circ(\alpha.\beta)(\{1,2,\ldots,|T|\})\}$.
\end{proof} 

\begin{example}
Let  $\phi=(i,t,s)$, $\tau=(\textbf{n},\textbf{o})$, so $L=\{i,t,s\}$ and $R=\{\textbf{n},\textbf{o}\}$.  Let 
$$T=\{i,t,\textbf{n}\}\qquad \text{and}\qquad B=\{s,\textbf{o}\},\quad\text{so}\quad T\cup B=L\cup R.$$
Then for $w=(s,i,t)$ and $v=(\mathbf{o},\mathbf{n})$,
$$\begin{array}{ccc}
\scf(\fkut_\phi)\otimes \scf(\fkut_\tau) & \longrightarrow &\dd \bigoplus_{\gamma\in \phi\shuffle \tau} \scf(\fkut_\gamma)\\
\chi^{(s,i,t)}\otimes \chi^{(\mathbf{o},\mathbf{n})} & \mapsto &\begin{array}{@{}c@{}} 
\chi^{\overset{{\color{gray}(i,t,s,\mathbf{n},\mathbf{o})}}{(s,i,t,\mathbf{o},\mathbf{n})}}+ \chi^{\overset{{\color{gray}(i,t,\mathbf{n},s,\mathbf{o})}}{(s,i,\mathbf{o},t,\mathbf{n})}}+\chi^{\overset{{\color{gray}(i,\mathbf{n},t,s,\mathbf{o})}}{(s,\mathbf{o},i,t,\mathbf{n})}}
+\chi^{\overset{{\color{gray}(\mathbf{n},i,t,s,\mathbf{o})}}{(\mathbf{o},s,i,t,\mathbf{n})}}+ \chi^{\overset{{\color{gray}(i,t,\mathbf{n},\mathbf{o},s)}}{(s,i,\mathbf{o},\mathbf{n},t)}}\\+\chi^{\overset{{\color{gray}(i,\mathbf{n},t,\mathbf{o},s)}}{(s,\mathbf{o},i,\mathbf{n},t)}}
+\chi^{\overset{{\color{gray}(\mathbf{n},i,t,\mathbf{o},s)}}{(\mathbf{o},s,i,\mathbf{n},t)}}+ \chi^{\overset{{\color{gray}(i,\mathbf{n},\mathbf{o},t,s)}}{(s,\mathbf{o},\mathbf{n},i,t)}}+\chi^{\overset{{\color{gray}(\mathbf{n},i,\mathbf{o},t,s)}}{(\mathbf{o},s,\mathbf{n},i,t)}}
+\chi^{\overset{{\color{gray}(\mathbf{n},\mathbf{o},i,t,s)}}{(\mathbf{o},\mathbf{n},s,i,t)}}
\end{array}
\\
 & \longrightarrow & \dd \bigoplus_{\gamma\in \phi\shuffle \tau} \scf(\fkut_{\gamma_T})\otimes \scf(\fkut_{\gamma_B})\\
 & \mapsto & \begin{array}{@{}c@{}} 
0+ \chi^{(s,i,\mathbf{o})}\otimes\chi^{(t,\mathbf{n})}+\chi^{(s,\mathbf{o},i)}\otimes\chi^{(t,\mathbf{n})}
+\chi^{(\mathbf{o},s,i)}\otimes\chi^{(t,\mathbf{n})}+ \chi^{(s,i,\mathbf{o})}\otimes\chi^{(\mathbf{n},t)}\\+\chi^{(s,\mathbf{o},i)}\otimes\chi^{(\mathbf{n},t)}
+ \chi^{(\mathbf{o},s,i)}\otimes\chi^{(\mathbf{n},t)}+0
+0+0,
 \end{array}
\end{array}$$
where we've added the shuffles $\gamma$ in gray.  On the other hand,
$$\begin{array}{ccc}
\scf(\fkut_\phi)\otimes \scf(\fkut_\tau) & \longrightarrow &\dd  \scf(\fkut_{\phi_{\{i,t\}}})\otimes  \scf(\fkut_{\phi_{\{s\}}})\otimes  \scf(\fkut_{\tau_{\{\mathbf{n}\}}})\otimes  \scf(\fkut_{\tau_{\{\mathbf{o}\}}})\\
\chi^{\overset{{\color{gray}(i,t,s)}}{(s,i,t)}}\otimes \chi^{\overset{{\color{gray}(\mathbf{n},\mathbf{o})}}{(\mathbf{o},\mathbf{n})}} & \mapsto & \chi^{(s,i)}\otimes \chi^{(t)}\otimes \chi^{(\mathbf{o})}\otimes \chi^{(\mathbf{n})}\\
& \longrightarrow & \dd\bigoplus_{\alpha\in \gamma_{LT}\shuffle \gamma_{RT}\atop \beta\in \gamma_{LB}\shuffle \gamma_{RB}} \scf(\fkut_\alpha)\otimes \scf(\fkut_\beta)\\
& \mapsto &  \begin{array}{@{}c@{}} 
 \chi^{(s,i,\mathbf{o})}\otimes\chi^{(t,\mathbf{n})}+\chi^{(s,\mathbf{o},i)}\otimes\chi^{(t,\mathbf{n})}
+\chi^{(\mathbf{o},s,i)}\otimes\chi^{(t,\mathbf{n})}\\+  \chi^{(s,i,\mathbf{o})}\otimes\chi^{(\mathbf{n},t)}+\chi^{(s,\mathbf{o},i)}\otimes\chi^{(\mathbf{n},t)}
+\chi^{(\mathbf{o},s,i)}\otimes\chi^{(\mathbf{n},t)}
 \end{array}
 \end{array}$$
 Note that  if $T=\{i,t,\mathbf{o}\}$, then we get 0 in both cases.
\end{example}

\end{document}